\newcommand{\stkout}[1]{\ifmmode\text{\sout{\ensuremath{#1}}}\else\sout{#1}\fi}
\newtheorem{theorem}{Theorem}[section]
\theoremstyle{definition}
\newtheorem{example}{Example}[section]
\theoremstyle{remark}
\newtheorem{remark}{Remark}[section]
\numberwithin{theorem}{section}
\numberwithin{equation}{section}
\crefname{section}{Section}{Sections}
\crefname{subsection}{Section}{Sections}
\crefname{condition}{Condition}{Conditions}
\crefname{hypothesis}{Hypothesis}{Conditions}
\crefname{assumption}{Assumption}{Assumptions}
\crefname{lemma}{Lemma}{Lemmas}
\crefname{fact}{Fact}{Facts}
\Crefname{figure}{Figure}{Figures}
\newcommand{\vertiii}[1]{{\left\vert\kern-0.25ex\left\vert\kern-0.25ex\left\vert #1 
    \right\vert\kern-0.25ex\right\vert\kern-0.25ex\right\vert}}
\newcommand{\Uadm}{\mathfrak U}
\newcommand{\Act}{\mathbb{U}}
\newcommand{\Usm}{\mathfrak U_{\mathsf{sm}}}
\newcommand{\Um}{\mathfrak U_{\mathsf{m}}}
\newcommand{\pV}{\mathrm{V}} % Probability measures space
\newcommand{\pv}{\mathrm{v}} % Probability measures
\newcommand{\sB}{{\mathscr{B}}}  % Ball
\newcommand{\cC}{{\mathcal{C}}}   % Continuous functions
\newcommand{\sE}{{\mathscr{E}}} 
\newcommand{\sF}{{\mathfrak{F}}}   % sigma field
\newcommand{\cJ}{{\mathcal{J}}}  % discounted cost
\newcommand{\sL}{{\mathscr{L}}}  % 
\newcommand{\Lp}{{L}}            % Lp
\newcommand{\Lyap}{{\mathcal{V}}}  % Lyapunov
\newcommand{\RR}{\mathds{R}}
\newcommand{\NN}{\mathds{N}}
\newcommand{\Rd}{{\mathds{R}^{d}}}
\DeclareMathOperator{\Exp}{\mathbb{E}}
\newcommand{\D}{\mathrm{d}}
\newcommand{\Ind}{\mathds{1}}   % indicator function
\newcommand{\cD}{\mathcal{D}} %domain
\newcommand{\Sob}{{\mathscr W}}    % Sobolev Space
\newcommand{\Sobl}{{\mathscr W}_{\text{loc}}} % Sobolev Space(local)
\newcommand{\df}{:=}
\newcommand{\transp}{^{\mathsf{T}}}
\DeclareMathOperator*{\osc}{osc}
\DeclareMathOperator*{\trace}{Tr}
\newcommand{\sorder}{{\mathfrak{o}}}
\newcommand{\grad}{\nabla}
\newcommand{\uuptau}{{\Breve\uptau}}
\newcommand{\abs}[1]{\lvert#1\rvert}
\newcommand{\norm}[1]{\lVert#1\rVert}
\definecolor{dmagenta}{rgb}{.4,.1,.5}
\definecolor{dblue}{rgb}{.0,.0,.5}
\definecolor{mblue}{rgb}{.0,.0,.7}
\definecolor{ddblue}{rgb}{.0,.0,.4}
\definecolor{dred}{rgb}{.7,.0,.0}
\definecolor{dgreen}{rgb}{.0,.5,.0}
\definecolor{Eeom}{rgb}{.0,.0,.5}
\begin{document}
\title[Robustness of Stochastic Optimal Control for Controlled Diffusion Processes]
{Robustness of Stochastic Optimal Control to Approximate Diffusion Models under Several Cost Evaluation Criteria}

\author[Somnath Pradhan]{Somnath Pradhan$^\dag$}
\address{$^\dag$Department of Mathematics and Statistics,
Queen's University, Kingston, ON, Canada}
\email{sp165@queensu.ca}

\author[Serdar Y\"{u}ksel]{Serdar Y\"{u}ksel$^{\ddag}$}
\address{$^\ddag$Department of Mathematics and Statistics,
Queen's University, Kingston, ON, Canada}
\email{yuksel@queensu.ca}

%%%%%%%%%%%%%%%%%%%%%%%%%%%%%%%%%%%%%%%%%%%%%%%%%%%%%%%%%%%%%%%%%%%%%%%%%%%%%%%%
\begin{abstract}
In control theory, typically a nominal model is assumed based on which an optimal control is designed and then applied to an actual (true) system. This gives rise to the problem of performance loss due to the mismatch between the true model and the assumed model. A robustness problem in this context is to show that the error due to the mismatch between a true model and an assumed model decreases to zero as the assumed model approaches the true model. We study this problem when the state dynamics of the system are governed by controlled diffusion processes. In particular, we will discuss continuity and robustness properties of finite horizon and infinite-horizon $\alpha$-discounted/ergodic optimal control problems for a general class of non-degenerate controlled diffusion processes, as well as for optimal control up to an exit time. Under a general set of assumptions and a convergence criterion on the models, we first establish that the optimal value of the approximate model converges to the optimal value of the true model. We then establish that the error due to mismatch that occurs by application of a control policy, designed for an incorrectly estimated model, to a true model decreases to zero as the incorrect model approaches the true model. We will see that, compared to related results in the discrete-time setup, the continuous-time theory will let us utilize the strong regularity properties of solutions to optimality (HJB) equations, via the theory of uniformly elliptic PDEs, to arrive at strong continuity and robustness properties.
\end{abstract}
%%%%%%%%%%%%%%%%%%%%%%%%%%%%%%%%%%%%%%%%%%%%%%%%%%%%%%%%%%%%%%%%%%%%%%%%%%%%%%%%
\keywords{Robust control, Controlled diffusions, Hamilton-Jacobi-Bellman equation, Stationary control}

\subjclass[2000]{Primary: 93E20, 60J60; secondary: 49J55}

%%%%%%%%%%%%%%%%%%%%%%%%%%%%%%%%%%%%%%%%%%%%%%%%%%%%%%%%%%%%%%%%%%%%%%%%%%%%%%%%
\maketitle

%%%%%%%%%%%%%%%%%%%%%%%%%%%%%%%%%%%%%%%%%%%%%%%%%%%%%%%%%%%%%%%%%%%%%%%%%%%%%%%%

\section{Introduction}
In stochastic control applications, typically only an ideal model is assumed, or learned from available incomplete data, based on which an optimal control is designed and then applied to the actual system. This gives rise to the problem of performance loss due to the mismatch between the actual system and the assumed system. A robustness problem in this context is to show that the error due to mismatch decreases to zero as the assumed system approaches the actual system. With this motivation, in this article, our goal is to study the continuity and robustness properties of finite horizon and infinite horizon discounted/ergodic cost problems for a large class of multidimensional controlled diffusions. We note that the problems of existence, uniqueness and verification of optimality of stationary Markov
policies have been studied extensively in literature see e.g., \cite{Bor-book}, \cite{HP09-book} (finite horizon) \cite{BS86}, \cite{BB96} (discounted cost)  \cite{AA12}, \cite{AA13}, \cite{BG88I}, \cite{BG90b} (ergodic cost) and references therein. For a book-length exposition of this topic see e.g., \cite{ABG-book}. 

In more explicit terms, here is the problem that we will study. For a precise statement please see Section \ref{contRobSec}. Suppose that our true model is represented as $(X, c)$ (see, e.g., \cref{E1.1}), where $X$ is the true system model (representing the system evolution model via the drift and diffusion terms) and $c$ is the associated running cost function, and let $(X_n, c_n)$ (see, e.g., \cref{ASE1.1}) be a sequence of approximating models $X_n$ with associated running cost functions $c_n$, such that as $n\to \infty$ approximating models $X_n$ converge to the true model $X$ in some sense to be precisely stated. Suppose that for each choice of control policy $U$ the associated total cost in true/approximating models are $\cJ(c,U),$  $\cJ_n(c_n,U)$ respectively. The objective of the controller is to minimize the total cost over all possible admissible policies. If the optimal control policies of the true/approximating models are $v^*$, $v^{*n}$ respectively, the performance loss due to mismatch is given by $|\cJ(c,v^*) - \cJ(c,v^{*n})|$. Thus the robustness problem in this context is to show that $|\cJ(c,v^*) - \cJ(c,v^{*n})| \to 0$ as $n\to \infty$\,. See Section \ref{contRobSec}. In this sense, our paper can be viewed as a continuous-time counterpart of the setting studied in \cite{KY-20}, \cite{KRY-20}.

This problem is of major practical importance and, accordingly, there have been many studies. Most of the existing works in this direction are concerned with the discrete time Markov decision process, see for instance \cite{KY-20}, \cite{KRY-20}, \cite{BJP02}, \cite{KV16}, \cite{NG05} \cite{SX15}, and references therein.
%%%%%%%%%%%%%%%%%%%%%%%%%%%%%%%%%%%%%%%%%%%%%%%%%%%%%%%%%%%%%%%%%%%%%%%%%%%%%%

We should note that the term {\it robustness} has various interpretations, contexts and solution methods. A common approach to robustness in the literature has been to design controllers that work sufficiently well for all possible uncertain systems under some structured constraints, such as $H_\infty$ norm bounded perturbations (see \cite{basbern}, \cite{zhou1996robust}). For such problems, the design for robust controllers has often been developed through a game theoretic formulation where the minimizer is the controller and the maximizer is the uncertainty. In \cite{DJP00}, \cite{jacobson1973optimal} the authors established the connections of this formulation to risk sensitive control. Using Legendre-type transforms, relative entropy constraints came in to the literature to probabilistically model the uncertainties, see e.g. \cite[Eqn. (4)]{dai1996connections} or \cite[Eqns. (2)-(3)]{DJP00}. Here, one selects a nominal system which satisfies a relative entropy bound between the actual measure and the nominal measure, solves a risk sensitive optimal control problem, and this solution value provides an upper bound for the original system performance. Therefore, a common approach in robust stochastic control has been to consider all models which satisfy certain bounds in terms of relative entropy pseudo-distance (or Kullback-Leibler divergence); see e.g. \cite{DJP00,dai1996connections,dupuis2000kernel,boel2002robustness} among others.  In order to quantify the uncertainty in the system models, other than the relative entropy pseudo-distance, various other metrics/criterion have also been used in the literature. In \cite{tzortzis2015dynamic}, for discrete time controlled models, the authors have studied a min-max formulation for robust control where the one-stage transition kernel belongs to a ball under the total variation metric for each state action pair. For distributionally robust stochastic optimization problems, it is assumed that the underlying probability measure of the system lies within an ambiguity set and a worst case single-stage optimization is made considering the probability measures in the ambiguity set. To construct ambiguity sets, \cite{blanchet2016}, \cite{esfahani2015} use the Wasserstein metric, \cite{erdogan2005} uses the Prokhorov metric which metrizes the weak topology, \cite{sun2015} uses the total variation distance, and \cite{lam2016} works with relative entropy. For fully observed finite state-action space models with uncertain transition probabilities, the authors in \cite{iyengar2005robust}, \cite{nilim2005robust} have studied robust dynamic programming approaches through a min-max formulation. Similar work with model uncertainty includes \cite{oksendal2014forward}, \cite{benavoli2011robust}, \cite{xu_mannor}. In the economics literature related work has been done in \cite{hansen2001robust}, \cite{gossner2008entropy}.   
  
The robustness formulation we study has been considered in \cite{KY-20}, \cite{KRY-20} for discrete-time models, where the authors studied both continuity of value functions as transition kernel models converge, as well as the robustness problem where an optimal control designed for an incorrect approximate model is applied to a true model and the mismatch term is studied. The solution approach is fundamentally different in the continuous-time analysis we present in this paper. In a related study \cite{Dean18}, the author studied the optimal control of systems with unknown dynamics for a linear quadratic regulator setup and proposes an algorithm to learn the system from observed data with quantitative convergence bounds. The author in \cite[Theorem 5.1]{Lan81} has considered fully observed discrete time controlled models and established continuity results for approximate models and gives a set convergence result for sets of optimal control actions, this set convergence result is inconclusive for robustness without further assumptions on the true system model (for more details see \cite{KY-20}). For fully observed MDPs, \cite{muller1997does} studied continuity of the value function under a general metric defined as the integral probability metric, which captures both the total variation metric or the Kantorovich metric with different setups (which is not weaker than the metrics leading to weak convergence). A recent study on game problems along a similar theme is presented in \cite{subramanian2021robustness}.

For control problems of MDPs with standard Borel spaces, the approximation methods through quantization, which lead to finite models, can be viewed as approximations of transition kernels, but this interpretation requires caution: indeed, \cite{SaYuLi17, arruda2012, arruda2013}, among many others, study approximation methods for MDPs where the convergence of approximate models is satisfied in a particularly constructed fashion. Reference \cite{SaYuLi17} presents a construction for the approximate models through quantizing the actual model with continuous spaces (leading to a finite space model), which allows for continuity and robustness results with only a weak continuity assumption on the true transition kernel which, in turn,  leads to the weak convergence of the approximate models. For both fully observed and partially observed models, a detailed analysis of approximation methods for continuous state and action spaces can be found in \cite{SaLiYuSpringer} .

%%%%%%%%%%%%%%%%%%%%%%%%%%%%%%%%%%%%%%%%%%%%%%%%%%%%%%%%%%%%%%%%%%%%%%%%%%%%%%%
The literature on robustness of stochastic optimal control for continuous time system seems to be rather limited; see e.g., \cite{GL99}, \cite{LJE15} \cite{hansen2001robust}\,. In \cite{GL99} the authors have considered the problem of controlling a system whose dynamics are given by a stochastic differential equation (SDE) whose coefficients are known only up to a certain degree of accuracy. For the finite horizon reward maximization problem, using the technique of contractive operators, \cite{GL99} has obtained upper bounds of performance loss due to mismatch (or, ``robustness index") and has shown by an example that the robustness index may behave abnormally even if we have the convergence of the value functions. The associated discounted payoff maximization problem has been studied in \cite{LJE15}, where using a Lyapunov type stability assumption the authors have studied the robustness problem via a game theoretic formulation. For controlled diffusion models, the authors in \cite{hansen2001robust} described the links between the max-min expected utility theory and the applications of robust-control theory, in analogy with some of the papers on discrete-time noted above adopting a min-max formulation. Along a further direction, for controlled diffusions, via the Maximum Principle technique, \cite{PDPB02a}, \cite{PDPB02b}, \cite{PDPB02c} have established the robustness of optimal controls for the finite horizon payoff criterion.

In a recent comprehensive work \cite{RZ21}, the authors have studied the robustness of feedback relaxed controls for a continuous time stochastic exit time problem. Under sufficient smoothness assumptions on the coefficients (i.e, uniform Lipschitz continuity on the diffusion coefficients and uniform H\"older continuity on the discount factor and payoff function on a fixed bounded domain) they have established that a regularized control problem admits a H\"older continuous optimal feedback control and also they have shown that both the value function and the feedback control of the regularized control problem are Lipschitz stable with respect to parameter perturbations when the action space is finite. It is known that the optimal control obtained form the HJB equation (i.e. the argmin function) in general is unstable with respect to perturbations of coefficients; in practice, this would result in numerical instability of learning algorithms (as noted in \cite{RZ21}).

Stability/continuity of solutions of PDEs with respect coefficient perturbations is a significant mathematical and practical question in PDE theory (see e.g. \cite{WLS01}, \cite{SI72}). The continuity results established in this paper (see Theorems~\ref{TC1.3}, \ref{ErgoContnuity}, \ref{TErgoOptCont}) will provide sufficient conditions which ensure stability of solutions of semilinear elliptic PDEs (HJB equations) in the whole space $\Rd$.

Our robustness results also will be useful to the study of the robust optimal investment problems for local volatility models, e.g. given in \cite[Remark~2.1]{AS08} (also, see \cite{KT12}, \cite{BDD20})\,.          

When the system noise is not given by a Wiener process, but it is given by a general wide bandwidth noise (or, a more general discontinuous martingales \cite{LRT00}), the controlled process becomes a non-Markovian process even under stationary Markov policies. The general method of studying optimal control problem for such a system is to find suitable Markovian processes which approximate the non-Markovian process (see, \cite{K90}, \cite{KR87}, \cite{KR87a}, \cite{KR88}). For wide bandwidth noise driven controlled systems \cite{K90}, \cite{KR87}, \cite{KR87a}, \cite{KR88}, diffusion approximation techniques were used to study stochastic optimal control problems. The results described in this paper are complementary to the above mentioned works on the diffusion approximation of wide bandwidth noise driven systems. 

{\bf Contributions and main results.} In the present paper, our aim is to study the continuity and robustness properties for a general class of controlled diffusion processes in $\Rd$ for both infinite horizon discounted/ ergodic costs, where the action space is a (general) compact metric space. As in \cite{KY-20}, \cite{KRY-20}, in order to establish our desired robustness results we will use the continuity result as an intermediate step. For the discounted cost case, we will establish our results following a direct approach (under a relatively weaker set of assumptions on the diffusion coefficients, i.e., locally Lipschitz continuous coefficients). Using the results on existence and uniqueness of solutions of the associated discounted Hamilton Jacobi Bellman (HJB) equation and the complete characterization of (discounted) optimal policies in the space of stationary Markov policies (see \cite[Theorem~3.5.6]{ABG-book}), we first establish the continuity of value functions. Then utilizing this continuity of value functions, we derive a robustness result. The analysis of ergodic cost (or long-run expected average cost) is somewhat more involved. To the best of our knowledge there is no work on continuity and robustness properties of optimal controls for the ergodic cost criterion in the existing literature (for the discrete-time setup, see \cite{KRY-20}). We have studied these ergodic cost problems under two sets of assumptions: In the first case, we assume that our running cost function satisfies a near-monotone type structural assumption (see, eq. \cref{ENearmonot}, Assumption~(A6)), and in the second case we assume Lyapunov type stability assumptions on the dynamics of the system (see Assumption~(A7))\,. 

One of the major issues in analyzing the robustness of ergodic optimal controls under the near-monotone hypothesis is the non-uniqueness/restricted uniqueness of solutions of the associated HJB equation (see, \cite[Example~3.8.3]{ABG-book}, \cite{AA13}). It is shown in \cite[Example~3.8.3]{ABG-book} that the ergodic HJB equation may admit uncountably many solutions. Considering this, in \cite[Theorem~1.1]{AA13} the author has established the uniqueness of compatible solution pairs (see \cite[Definition~1.1]{AA13}). Exploiting this uniqueness result, under a suitable tightness assumption (on a certain set of invariant measures) we will establish the desired robustness result. Under the Lyapunov type stability assumption it is known that the ergodic HJB equation admits a unique solution in a certain class of functions, also the complete characterization of ergodic optimal control is known (see \cite[Theorem~3.7.11]{ABG-book} and \cite[Theorem~3.7.12]{ABG-book})\,. Utilizing this characterization of optimal controls, we derive the robustness properties of ergodic optimal controls under a Lyapunov stability assumption. 

We also emphasize the duality between the PDE approach vs. a probabilistic flow approach to study robustness. The PDE approach presents a very general and conclusive, yet concise and unified, approach for several cost criteria (notably, a probabilistic approach via Dynkin's lemma would require separate arguments for discounted infinite-horizon and average cost infinite-horizon criteria) and such a unified approach had not been considered earlier, to our knowledge.

Thus, the main results of this article can be roughly described as follows. 
\begin{itemize}
\item[•] {\it For discounted cost criterion:} We establish continuity of value functions and provide sufficient conditions which ensure robustness/stability of optimal controls designed under model uncertainties.
\item[•] {\it For ergodic cost criterion:} Under two different sets of assumptions ((i) where the running cost is near-monotone or (ii) where a Lyapunov stability condition holds) we establish the continuity of value functions and exploiting the continuity results we derive the robustness/stability of ergodic optimal controls designed for approximate models applied to actual systems.
\item[•] {\it For finite horizon cost criterion:} Under uniform boundedness assumptions on the drift term and diffusion matrices (of the true and approximating models), we establish continuity of value functions. Then exploiting the continuity result we prove the robustness/stability of optimal controls designed under model uncertainties.
\item[•] {\it For cost up to an exit time:} Similar to the above criteria, under a mild set of assumptions we first establish the continuity of value functions and then using the continuity results we establish the robustness/stability of optimal controls designed under model uncertainties. 
\end{itemize}          

We will see that compared with the discrete-time counterpart of this problem studied in \cite{KY-20} (discounted cost) and \cite{KRY-20} (average cost), where value iteration methods were crucially used, in our analysis here we will develop rather direct arguments, with strong implications, utilizing regularity properties of value functions: In the discrete-time setup, these properties need to be established via tedious arguments whereas the continuous-time theory allows for the use of regularity properties of solutions to PDEs. Nonetheless, we will see that {\it continuous convergence in control actions} of models and cost functions is a unifying condition for continuity and robustness properties in both the discrete-time setup studied in \cite{KY-20} (discounted cost) and \cite{KRY-20} (average cost) and our current paper. Compared to \cite{RZ21}, in addition to the infinite horizon criteria we study, we note that the perturbations we consider do not involve only coefficient/parameter variations, i.e., we consider functional perturbations, and the action space we consider is uncountable, though we do not establish the Lipschitz property of control policies, unlike \cite{RZ21}.

The rest of the paper is organized as follows. Section~\ref{PD} introduces the the problem setup and summarizes the notation. Section~\ref{discCostSec} is devoted to the analysis of robustness of optimal controls for discounted cost criterion. In Section~\ref{secErgodicCost} we provide the analysis of robustness of ergodic optimal control under two different sets of hypotheses (i) near-monotonicity (ii) Lyapunov stability. For the finite horizon cost criterion the robustness problem is analyzed in Section~\ref{Finitecost}. The robustness problem for optimal controls up to an exit time is considered in Section~\ref{exitTimeSection}. 
%%%%%%%%%%%%%%%%%%%%%%%%%%%%%%%%%%%%%%%%%%%%%%%%%%%%%%%%%%%%%%%%%%%%%%%
\section{Description of the problem}\label{PD} Let $\Act$ be a compact metric space and $\pV=\mathscr{P}(\Act)$ be the space of probability measures on  $\Act$ with topology of weak convergence. Let $$b : \Rd \times \Act \to  \Rd, $$ $$ \sigma : \Rd \to \RR^{d \times d},\, \sigma = [\sigma_{ij}(\cdot)]_{1\leq i,j\leq d},$$ be given functions. We consider a stochastic optimal control problem whose state is evolving according to a controlled diffusion process given by the solution of the following stochastic differential equation (SDE)
\begin{equation}\label{E1.1}
\D X_t \,=\, b(X_t,U_t) \D t + \upsigma(X_t) \D W_t\,,
\quad X_0=x\in\Rd.
\end{equation}
Where 
\begin{itemize}
\item
$W$ is a $d$-dimensional standard Wiener process, defined on a complete probability space $(\Omega, \sF, \mathbb{P})$.
\item 
 We extend the drift term $b : \Rd \times \pV \to  \Rd$ as follows:
\begin{equation*}
b (x,\mathrm{v}) = \int_{\Act} b(x,\zeta)\mathrm{v}(\D \zeta), 
\end{equation*}
for $\mathrm{v}\in\pV$.
\item
$U$ is a $\pV$ valued process satisfying the following non-anticipativity condition: for $s<t\,,$ $W_t - W_s$ is independent of
$$\sF_s := \,\,\mbox{the completion of}\,\,\, \sigma(X_0, U_r, W_r : r\leq s)\,\,\,\mbox{relative to} \,\, (\sF, \mathbb{P})\,.$$  
\end{itemize}
The process $U$ is called an \emph{admissible} control, and the set of all admissible controls is denoted by $\Uadm$ (see, \cite{BG90}).

To ensure existence and uniqueness of strong solutions of \cref{E1.1}, we impose the following assumptions on the drift $b$ and the diffusion matrix $\upsigma$\,. 
\begin{itemize}
%\item[(A1)]
\item[\hypertarget{A1}{{(A1)}}]
\emph{Local Lipschitz continuity:\/}
The function
$\upsigma\,=\,\bigl[\upsigma^{ij}\bigr]\colon\RR^{d}\to\RR^{d\times d}$,
$b\colon\Rd\times\Act\to\Rd$ are locally Lipschitz continuous in $x$ (uniformly with respect to the control action for $b$). In particular, for some constant $C_{R}>0$
depending on $R>0$, we have
\begin{equation*}
\abs{b(x,\zeta) - b(y, \zeta)}^2 + \norm{\upsigma(x) - \upsigma(y)}^2 \,\le\, C_{R}\,\abs{x-y}^2
\end{equation*}
for all $x,y\in \sB_R$ and $\zeta\in\Act$, where $\norm{\upsigma}\df\sqrt{\trace(\upsigma\upsigma\transp)}$\,. Also, we are assuming that $b$ is jointly continuous in $(x,\zeta)$.

\medskip
\item[\hypertarget{A2}{{(A2)}}]
\emph{Affine growth condition:\/}
$b$ and $\upsigma$ satisfy a global growth condition of the form
\begin{equation*}
\sup_{\zeta\in\Act}\, \langle b(x, \zeta),x\rangle^{+} + \norm{\upsigma(x)}^{2} \,\le\,C_0 \bigl(1 + \abs{x}^{2}\bigr) \qquad \forall\, x\in\RR^{d},
\end{equation*}
for some constant $C_0>0$.

\medskip
\item[\hypertarget{A3}{{(A3)}}]
\emph{Nondegeneracy:\/}
For each $R>0$, it holds that
\begin{equation*}
\sum_{i,j=1}^{d} a^{ij}(x)z_{i}z_{j}
\,\ge\,C^{-1}_{R} \abs{z}^{2} \qquad\forall\, x\in \sB_{R}\,,
\end{equation*}
and for all $z=(z_{1},\dotsc,z_{d})\transp\in\RR^{d}$,
where $a\df \frac{1}{2}\upsigma \upsigma\transp$.
\end{itemize}

By a Markov control we mean an admissible control of the form $U_t = v(t,X_t)$ for some Borel measurable function $v:\RR_+\times\Rd\to\pV$. The space of all Markov controls is denoted by $\Um$\,.
If the function $v$ is independent of $t$, then $U$ or by an abuse of notation $v$ itself is called a stationary Markov control. The set of all stationary Markov controls is denoted by $\Usm$. From \cite[Section~2.4]{ABG-book}, we have that the set $\Usm$ is metrizable with compact metric under the following topology: A sequence $v_n\to v$ in $\Usm$ if and only if
\begin{equation*}
\lim_{n\to\infty}\int_{\Rd}f(x)\int_{\Act}g(x,u)v_{n}(x)(\D u)\D x = \int_{\Rd}f(x)\int_{\Act}g(x,u)v(x)(\D u)\D x
\end{equation*}
for all $f\in L^1(\Rd)\cap L^2(\Rd)$ and $g\in \cC_b(\Rd\times \Act)$ (for more details, see \cite[Lemma~2.4.1]{ABG-book})\,. It is well known that under the hypotheses \hyperlink{A1}{{(A1)}}--\hyperlink{A3}{{(A3)}}, for any admissible control \cref{E1.1} has a unique strong solution \cite[Theorem~2.2.4]{ABG-book}, and under any stationary Markov strategy \cref{E1.1} has unique strong solution which is a strong Feller (therefore strong Markov) process \cite[Theorem~2.2.12]{ABG-book}.

\subsection{Cost Criteria}

Let $c\colon\Rd\times\Act \to \RR_+$ be the \emph{running cost} function. We assume that 
\begin{itemize}
\item[\hypertarget{A4}{{(A4)}}]
The \emph{running cost} $c$ is bounded (i.e., $\|c\|_{\infty} \leq M$ for some positive constant $M$), jointly continuous in $(x, \zeta)$ and locally Lipschitz continuous in its first argument uniformly with respect to $\zeta\in\Act$.
\end{itemize}
This condition (A4) can also be relaxed to (A4\'), to be presented further below, where the local Lipschitz property is eliminated.

 We extend $c\colon\Rd\times\pV \to\RR_+$ as follows: for $\pv \in \pV$
\begin{equation*}
c(x,\pv) := \int_{\Act}c(x,\zeta)\pv(\D\zeta)\,.
\end{equation*}

In this article, we consider the problem of minimizing finite horizon, discounted, ergodic and control up to an exit time cost criteria:\\

{\bf Discounted cost criterion.}  For $U \in\Uadm$, the associated \emph{$\alpha$-discounted cost} is given by
\begin{equation}\label{EDiscost}
\cJ_{\alpha}^{U}(x, c) \,\df\, \Exp_x^{U} \left[\int_0^{\infty} e^{-\alpha s} c(X_s, U_s) \D s\right],\quad x\in\Rd\,,
\end{equation} where $\alpha > 0$ is the discount factor
and $X(\cdot)$ is the solution of \cref{E1.1} corresponding to $U\in\Uadm$ and $\Exp_x^{U}$ is the expectation with respect to the law of the process $X(\cdot)$ with initial condition $x$. The controller tries to minimize \cref{EDiscost} over his/her admissible policies $\Uadm$\,. Thus, a policy $U^{*}\in \Uadm$ is said to be optimal if for all $x\in \Rd$ 
\begin{equation}\label{OPDcost}
\cJ_{\alpha}^{U^*}(x, c) = \inf_{U\in \Uadm}\cJ_{\alpha}^{U}(x, c) \,\,\, (\,=:\, \,\, V_{\alpha}(x))\,,
\end{equation} where $V_{\alpha}(x)$ is called the optimal value.\\

{\bf Ergodic cost criterion.} For each $U\in\Uadm$ the associated ergodic cost is defined as
\begin{equation}\label{ECost1}
\sE_{x}(c, U) = \limsup_{T\to \infty}\frac{1}{T}\Exp_x^{U}\left[\int_0^{T} c(X_s, U_s) \D{s}\right]\,,
\end{equation} and the optimal value is defined as
\begin{equation}\label{ECost1Opt}
\sE^*(c) \,\df\, \inf_{x\in\Rd}\inf_{U\in \Uadm}\sE_{x}(c, U)\,.
\end{equation}
Then a control $U^*\in \Uadm$ is said to be optimal if we have 
\begin{equation}\label{ECost1Opt1}
\sE_{x}(c, U^*) = \sE^*(c)\quad \text{for all}\,\,\, x\in \Rd\,.
\end{equation}\\

{\bf Finite horizon cost.}  For $U\in \Uadm$, the associated \emph{finite horizon cost} is given by
\begin{equation}\label{FiniteCost1}
\cJ_{T}^U(x, c) = \Exp_x^{U}\left[\int_0^{T} c(X_s, U_s) \D{s} + H(X_T)\right]\,,
\end{equation} where $H(\cdot)$ is the terminal cost. The optimal value is defined as
\begin{equation}\label{FiniteCost1Opt}
\cJ_{T}^*(x, c) \,\df\, \inf_{U\in \Uadm}\cJ_{T}^U(x, c)\,.
\end{equation}
Thus, a policy $U^*\in \Uadm$ is said to be (finite horizon) optimal if we have 
\begin{equation}\label{FiniteCost1Opt1}
\cJ_{T}^{U^*}(x, c) = \cJ_{T}^*(x, c)\quad \text{for all}\,\,\, x\in \Rd\,.
\end{equation}\\

{\bf Control up to an exit time.} This criterion will be presented in Section \ref{exitTimeSection}. Our analysis for this criterion will be immediate given the study involving the above criteria. \\

%%%%%%%%%%%%%%%%%%%%%%%%%%%%%%%%%%%%%%%%%%%%%%%%%%%%%%%%%%%%%%%%%%%%%%%%%%%%%%%%%%
We define a family of operators $\sL_{\zeta}$ mapping
$\cC^2(\Rd)$ to $\cC(\Rd)$ by
\begin{equation}\label{E-cI}
\sL_{\zeta} f(x) \,\df\, \trace\bigl(a(x)\grad^2 f(x)\bigr) + \,b(x,\zeta)\cdot \grad f(x)\,, 
\end{equation}
for $\zeta\in\Act$, \,\, $f\in \cC^2(\Rd)$\,.
For $\pv \in\pV$ we extend $\sL_{\zeta}$ as follows:
\begin{equation}\label{EExI}
\sL_\pv f(x) \,\df\, \int_{\Act} \sL_{\zeta} f(x)\pv(\D \zeta)\,.
\end{equation} For $v \in\Usm$, we define
\begin{equation}\label{Efixstra}
\sL_{v} f(x) \,\df\, \trace(a\grad^2 f(x)) + b(x,v(x))\cdot\grad f(x)\,.
\end{equation}
We are interested in the robustness of optimal controls under these criteria. To this end, we now introduce our approximating models.

\subsection{Approximating Control Diffusion Process:}
Let, $\upsigma_n\,=\,\bigl[\upsigma_n^{ij}\bigr]\colon\RR^{d}\to\RR^{d\times d}$, $b_n\colon\Rd\times\Act\to\Rd$, $c_n\colon\Rd\times\Act\to\Rd$ be sequence of functions satisfying the following assumptions 
\begin{itemize}
\item[\hypertarget{A5}{{(A5)}}]
\begin{itemize}
\item[(i)] as $n\to\infty$ 
\begin{equation}\label{AproxiE1}
\upsigma_n(x)\to \upsigma(x)\quad \text{a.e.}\,\, x\in\Rd\,,
\end{equation}
\item[(ii)]\emph{Continuous convergence in controls}: for any sequence $\zeta_n\to \zeta$
\begin{equation}\label{AproxiE2}
c_n(x,\zeta_n)\to c(x,\zeta)\quad\text{and}\quad b_n(x,\zeta_n)\to b(x,\zeta)\quad \text{a.e.}\,\, x\in\Rd\,.  
\end{equation}
\item[(iii)] for each $n\in\NN$,\, $b_n$ and $\upsigma_n$ satisfy Assumptions (A1) - (A3) and $c_n$ is uniformly bounded ( in particular, $\norm{c_n}_{\infty} \leq M$ where $M$ is a positive constant as in (A4)), jointly continuous in $(x, \zeta)$ and locally Lipschitz continuous in its first argument uniformly with respect to $\zeta\in\Act$.\,. 
\end{itemize}
\end{itemize}
Let for each $n\in\NN$, $X_t^n$ be the solution of the following SDE
\begin{equation}\label{ASE1.1}
\D X_t^n \,=\, b_n(X_t^n,U_t) \D t + \upsigma_n(X_t^n) \D W_t\,,
\quad X_0^n=x\in\Rd.
\end{equation}
Define a family of operators $\sL_{\zeta}^n$ mapping
$\cC^2(\Rd)$ to $\cC(\Rd)$ by
\begin{equation}\label{E-cIn}
\sL_{\zeta}^n f(x) \,\df\, \trace\bigl(a_n(x)\grad^2 f(x)\bigr) + \,b_n(x,\zeta)\cdot \grad f(x)\,, 
\end{equation}
for $\zeta\in\Act$, \,\, $f\in \cC^2(\Rd)$\,. For the approximated model, for each $n\in\NN$ and $U\in\Uadm$ the associated discounted cost is defined as
\begin{equation}\label{EApproDiscost}
\cJ_{\alpha, n}^{U}(x, c_n) \,\df\, \Exp_x^{U} \left[\int_0^{\infty} e^{-\alpha t} c_n(X_s^n, U_s) \D s\right],\quad x\in\Rd\,,
\end{equation} and the optimal value is defined as 
\begin{equation}\label{EApproOptDisc}
V_{\alpha}^n(x) \,\df\, \inf_{U\in\Uadm}\cJ_{\alpha, n}^{U}(x, c_n)
\end{equation}

For each $n\in\NN$ and $U\in\Uadm$ the associated ergodic cost is defined as   
\begin{equation}\label{ECostAprox1}
\sE_{x}^n(c_n, U) = \limsup_{T\to \infty}\frac{1}{T}\Exp_x^{U}\left[\int_0^{T} c_n(X_s^n, U_s) \D{s}\right]\,,
\end{equation} and the optimal value is defined as
\begin{equation}\label{ECost1OptAprox}
\sE^{n*}(c_n) \,\df\, \inf_{x\in\Rd}\inf_{U\in \Uadm}\sE_{x}^n(c_n, U)\,.
\end{equation} 
Similarly, for each $n\in\NN$ and $U\in\Uadm$ the associated finite horizon cost is given by   
\begin{equation}\label{FiniteCost1OptAprox1}
\cJ_{T,n}^U(x,c_n) \,\df\, \Exp_x^{U}\left[\int_0^{T} c_n(X_s^n, U_s) \D{s} + H(X_T^n)\right]\,.
\end{equation}
The optimal value is given by 
\begin{equation}\label{FiniteCost1Opt1Aprox2}
\cJ_{T,n}^*(x, c_n) = \inf_{U\in \Uadm}\cJ_{T,n}^U (x, c_n)\quad \text{for all}\,\,\, x\in \Rd\,,
\end{equation} where state process $X_t^n$ is given by the solution of the SDE \cref{ASE1.1}\,.

\subsection{Continuity and Robustness Problems}\label{contRobSec}

The primary objective of this article will be to address the following problems:
\begin{itemize}
\item \textbf{Continuity:} If the approximated model \cref{ASE1.1} approches the true model \cref{E1.1}, whether this implies
\begin{itemize}
\item[•] for discounted cost: $V_{\alpha}^n \to V_{\alpha} ?$
\item[•] for ergodic cost : $\sE^{n*}(c_n) \to \sE^{*}(c) ?$
\item[•] for finite horizon cost : $\cJ_{T,n}^*(x, c_n) \to \cJ_{T}^*(x, c) ?$
\item[•] for cost up to an exit time: $\hat{\cJ}_{e,n}^* \to \hat{\cJ}_{e}^*?$ \,\,\, (for details, see Section~\ref{exitTimeSection})
\end{itemize} 
\item \textbf{Robustness:} Suppose $v_{n}^*$ is an optimal policy designed over incorrect model \cref{ASE1.1} for finite horizon/ discounted/ergodic/up to an exit time cost problem, does this imply 
\begin{itemize}
\item[•] for discounted cost: $\cJ_{\alpha}^{v_n^*}(x, c) \to V_{\alpha} ?$
\item[•] for ergodic cost: $\sE_x(c, v_n^*) \to \sE^*(c) ? $
\item[•] for finite horizon cost : $\cJ_{T}^{v_n^*}(x, c) \to \cJ_{T}^*(x, c) ?$
\item[•] for cost up to an exit time: $\hat{\cJ}_{e}^{v_n^*} \to \hat{\cJ}_{e}^*?$ \,\,\, (for details, see Section~\ref{exitTimeSection})
\end{itemize}as $n\to \infty$\,. 
\end{itemize}
In this article, under a mild set of assumptions we show that the answers to the above mentioned questions are affirmative. 

\begin{example}\label{ERS11Example}
\begin{itemize}
\item[(i)]
If our noise term is not the (ideal) Brownian, and instead of \cref{E1.1}, the state dynamics of the system is governed the following SDE
\begin{equation}\label{ERS1.1}
\begin{cases}
\D \hat{X}_t^n \,=\, b(\hat{X}_t^n,U_t) \D t + \upsigma(\hat{X}_t^n) \D S_t^{n}\\
\D S_t^{n} \,=\, \hat{b}_n(\hat{X}_t^n) \D t + \hat{\upsigma}_n(\hat{X}_t^n) \D \hat{W}_t\,.
\end{cases}
\end{equation}
Here we are approximating the noise term by a It$\hat{\rm o}$ process $\{S_t^{n}\}$, given by
\begin{equation}\label{ERS1.1A}
\D S_t^{n} \,=\, \hat{b}_n(\hat{X}_t^n) \D t + \hat{\upsigma}_n(\hat{X}_t^n) \D \hat{W}_t\,,
\end{equation} where $\hat{b}_n(\cdot)\to 0$ and $\hat{\upsigma}_n(\cdot)\to I$ as $n\to\infty$. 
\item[(ii)] 
Suppose that \cref{E1.1} is approximated by \cref{ASE1.1} where $b_n$ and $\upsigma_n$ consist of polynomials of appropriate dimensions which converge pointwise to $b$ and $\upsigma$ (which are already assumed to be continuous) where we also have continuous convergence in control variable $\zeta$. 
\item[(iii)]
 Consider a Vasicek interest rate model, given by
\begin{equation*}
\D r_t \,=\, \theta(\mu - r_t)\D t + \sigma\D W_t\,.
\end{equation*} this is a mean reverting process, where $\theta$ is the rate of reversion, $\mu$ is the long term mean and $\sigma$ is the volatility. The wealth process corresponding to this interest model can be described by \cref{E1.1} (see \cite[Remark~2.1]{AS08}, \cite{KT12}, \cite{DJ07})). Since market models are typically incomplete, usually model parameters ($\theta, \mu, \sigma$) are learned from the market data. This gives rise to the problem of robustness of optimal investment. This also applies to several other interest/pricing models as well, \cite{merton1998applications}. 

\item[(iv)] 
In the above examples, $c_n$ can be a regularized version of $c$, e.g. by adding, for $\epsilon_n > 0$, $\epsilon_n \zeta^T \zeta$ where $\mathbb{U} \subset \mathbb{R}^m$, which then would continuously converge (in control) to $c$ as $\epsilon_n \to 0$. 
\end{itemize}
In the cases above, the approximating kernel conditions in (A5) would apply.
\end{example}
\begin{remark}
If we replace $\sigma(x)$ by $\sigma(x,\zeta)$, in the relaxed control framework if $\sigma(\cdot, v(\cdot))$ is Lipschitz continuous for $v\in \Usm$ then $\cref{E1.1}$ admits a unique strong solution. But in general stationary policies $v\in \Usm$ are just measurable functions. Existence of suitable strong solutions in our setting is not known (see, \cite[Remarks~2.3.2]{ABG-book}, \cite{B05Survey})\,. However, under stationary Markov policies one can prove the existence of weak solutions which may not be unique \cite{stroock1997multidimensional}\cite[Remarks~2.3.2]{ABG-book} (note though that uniqueness is established for $d=1,2$ in \cite[p. 192-194]{stroock1997multidimensional} under some conditions). The existence of a suitable strong solution (which is also a strong Markov process) under stationary Markov policies is essential to obtain stochastic representation of solutions of HJB equations (by applying It$\hat{o}$-Krylov formula).      
\end{remark}

\subsection*{Notation:}
\begin{itemize}
\item For any set $A\subset\RR^{d}$, by $\uptau(A)$ we denote \emph{first exit time} of the process $\{X_{t}\}$ from the set $A\subset\RR^{d}$, defined by
\begin{equation*}
\uptau(A) \,\df\, \inf\,\{t>0\,\colon X_{t}\not\in A\}\,.
\end{equation*}
\item $\sB_{r}$ denotes the open ball of radius $r$ in $\RR^{d}$, centered at the origin, and $\sB_{r}^c$ denotes the complement of $\sB_{r}$ in $\Rd$\,.
\item $\uptau_{r}$, $\uuptau_{r}$ denote the first exist time from $\sB_{r}$, $\sB_{r}^c$ respectively, i.e., $\uptau_{r}\df \uptau(\sB_{r})$, and $\uuptau_{r}\df \uptau(\sB^{c}_{r})$.
\item By $\trace S$ we denote the trace of a square matrix $S$.
\item For any domain $\cD\subset\RR^{d}$, the space $\cC^{k}(\cD)$ ($\cC^{\infty}(\cD)$), $k\ge 0$, denotes the class of all real-valued functions on $\cD$ whose partial derivatives up to and including order $k$ (of any order) exist and are continuous.
\item $\cC_{\mathrm{c}}^k(\cD)$ denotes the subset of $\cC^{k}(\cD)$, $0\le k\le \infty$, consisting of functions that have compact support. This denotes the space of test functions.
\item $\cC_{b}(\Rd)$ denotes the class of bounded continuous functions on $\Rd$\,.
\item $\cC^{k}_{0}(\cD)$, denotes the subspace of $\cC^{k}(\cD)$, $0\le k < \infty$, consisting of functions that vanish in $\cD^c$.
\item $\cC^{k,r}(\cD)$, denotes the class of functions whose partial derivatives up to order $k$ are H\"older continuous of order $r$.
\item $\Lp^{p}(\cD)$, $p\in[1,\infty)$, denotes the Banach space
of (equivalence classes of) measurable functions $f$ satisfying
$\int_{\cD} \abs{f(x)}^{p}\,\D{x}<\infty$.
\item $\Sob^{k,p}(\cD)$, $k\ge0$, $p\ge1$ denotes the standard Sobolev space of functions on $\cD$ whose weak derivatives up to order $k$ are in $\Lp^{p}(\cD)$, equipped with its natural norm (see, \cite{Adams})\,.
\item  If $\mathcal{X}(Q)$ is a space of real-valued functions on $Q$, $\mathcal{X}_{\mathrm{loc}}(Q)$ consists of all functions $f$ such that $f\varphi\in\mathcal{X}(Q)$ for every $\varphi\in\cC_{\mathrm{c}}^{\infty}(Q)$. In a similar fashion, we define $\Sobl^{k, p}(\cD)$.
\item  For $\mu > 0$, let $e_{\mu}(x) = e^{-\mu\sqrt{1+\abs{x}^2}}$\,, $x\in\Rd$\,. Then $f\in \Lp^{p,\mu}((0, T)\times \Rd)$ if $fe_{\mu} \in \Lp^{p}((0, T)\times \Rd)$\,. Similarly, $\Sob^{1,2,p,\mu}((0, T)\times \Rd) = \{f\in \Lp^{p,\mu}((0, T)\times \Rd) \mid f, \frac{\partial f}{\partial t}, \frac{\partial f}{\partial x_i}, \frac{\partial^2 f}{\partial x_i \partial x_j}\in \Lp^{p,\mu}((0, T)\times \Rd) \}$ with natural norm (see \cite{BL84-book})
\begin{align*}
\norm{f}_{\Sob^{1,2,p,\mu}} = \norm{\frac{\partial f}{\partial t}}_{\Lp^{p,\mu}((0, T)\times \Rd)} + \norm{f}_{\Lp^{p,\mu}((0, T)\times \Rd)}  + & \sum_{i}\norm{\frac{\partial f}{\partial x_i}}_{\Lp^{p,\mu}((0, T)\times \Rd)} \nonumber\\
&+ \sum_{i,j}\norm{\frac{\partial^2 f}{\partial x_i \partial x_j}}_{\Lp^{p,\mu}((0, T)\times \Rd)}\,.
\end{align*}
\end{itemize}
\section{Analysis of Discounted Cost}\label{discCostSec}
In this section we analyze the robustness of optimal controls for discounted cost criterion. From \cite[Theorem~3.5.6]{ABG-book}, we have the following characterization of the optimal $\alpha$-discounted cost $V_{\alpha}$\,.

\begin{theorem}\label{TD1.1}
Suppose Assumptions (A1)-(A4) hold. Then the optimal discounted cost $V_{\alpha}$ defined in \cref{OPDcost} is the unique solution in $\cC^2(\Rd)\cap\cC_b(\Rd)$ of the HJB equation
\begin{equation}\label{OptDHJB}
\min_{\zeta \in\Act}\left[\sL_{\zeta}V_{\alpha}(x) + c(x, \zeta)\right] = \alpha V_{\alpha}(x) \,,\quad \text{for all\ }\,\, x\in\Rd\,.
\end{equation}
Moreover, $v^*\in \Usm$ is $\alpha$-discounted optimal control if and only if it is a measurable minimizing selector of\cref{OptDHJB}, i.e.,
\begin{equation}\label{OtpHJBSelc}
b(x,v^*(x))\cdot \grad V_{\alpha}(x) + c(x, v^*(x)) = \min_{\zeta\in \Act}\left[ b(x, \zeta)\cdot \grad V_{\alpha}(x) + c(x, \zeta)\right]\quad \text{a.e.}\,\,\, x\in\Rd\,.
\end{equation}
\end{theorem}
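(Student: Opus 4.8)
The plan is to follow the classical three-part route for such verification results: first, construct a bounded classical solution of the HJB equation \cref{OptDHJB} on all of $\Rd$; second, show via a stochastic (It\^{o}--Krylov) representation that any such solution coincides with the value $V_\alpha$ of \cref{OPDcost}, which simultaneously delivers uniqueness; and third, read off the characterization \cref{OtpHJBSelc} of optimal stationary controls. This is exactly \cite[Theorem~3.5.6]{ABG-book}, and the same scheme will be reused for the later cost criteria.

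For the construction, I would first solve, for each $R>0$, the truncated Dirichlet problem
\[
\min_{\zeta\in\Act}\bigl[\sL_\zeta\varphi_R(x)+c(x,\zeta)\bigr]=\alpha\,\varphi_R(x)\quad\text{in }\sB_R,\qquad \varphi_R=0\quad\text{on }\partial\sB_R.
\]
By (A3) this equation is uniformly elliptic on $\sB_R$, its second-order part $\trace(a\grad^2\varphi_R)$ is independent of $\zeta$, and the Hamiltonian $p\mapsto\min_{\zeta}[b(x,\zeta)\cdot p+c(x,\zeta)]$ is concave and Lipschitz in $p$ with constant bounded on $\sB_R$; hence by the standard theory of semilinear uniformly elliptic equations it has a unique solution $\varphi_R\in\Sob^{2,p}(\sB_R)\cap\cC^{2,r}(\overline{\sB}_R)$ for every $p<\infty$ (equivalently, one checks that $\varphi_R(x)=\inf_{U}\Exp_x^{U}\!\int_0^{\uptau_R}e^{-\alpha s}c(X_s,U_s)\,\D s$ solves it). A comparison with the constant super/sub-solutions $M/\alpha$ and $0$ gives $0\le\varphi_R\le M/\alpha$ uniformly in $R$; interior $\Sob^{2,p}$ (Calder\'{o}n--Zygmund) estimates then bound $\norm{\varphi_R}_{\Sob^{2,p}(\sB_{R'})}$ uniformly for $\sB_{R'}\subset\subset\sB_R$; Sobolev embedding upgrades this to uniform local $\cC^{1,r}$ bounds; and a diagonal subsequence converges, locally uniformly in $\cC^{1,r}$ and weakly in $\Sobl^{2,p}(\Rd)$, to a bounded $V_\alpha\in\Sobl^{2,p}(\Rd)$ solving \cref{OptDHJB} a.e. Since $b,\upsigma,c$ are locally Lipschitz (hence locally H\"{o}lder), a Schauder bootstrap then places $V_\alpha$ in $\cC^{2,r}_{\mathrm{loc}}(\Rd)\subset\cC^2(\Rd)\cap\cC_b(\Rd)$.

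For the verification step, I would pick (by a measurable selection theorem, using compactness of $\Act$ and joint continuity of $b$ and $c$) a $v^*\in\Usm$ attaining the minimum in \cref{OtpHJBSelc}; under $v^*$, \cref{E1.1} has a unique strong solution that is a strong Feller, strong Markov process by \cite[Theorem~2.2.12]{ABG-book}, and (A2) precludes explosion. Applying the It\^{o}--Krylov formula to $s\mapsto e^{-\alpha s}V_\alpha(X_s)$ on $[0,t\wedge\uptau_R]$, using $\sL_{v^*}V_\alpha-\alpha V_\alpha=-c(\cdot,v^*)$, and then letting $R\to\infty$ (non-explosion) and $t\to\infty$ (boundedness of $V_\alpha$ and the factor $e^{-\alpha t}$), gives $V_\alpha(x)=\cJ_\alpha^{v^*}(x,c)$. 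The identical computation for an arbitrary $U\in\Uadm$, now with the HJB \emph{inequality} $\sL_{U_s}V_\alpha(X_s)-\alpha V_\alpha(X_s)\ge-c(X_s,U_s)$, gives $V_\alpha(x)\le\cJ_\alpha^{U}(x,c)$. Hence $V_\alpha=\inf_{U\in\Uadm}\cJ_\alpha^{U}(\cdot,c)$ is the optimal value of \cref{OPDcost} and $v^*$ is optimal; and any other solution $\widehat V\in\cC^2(\Rd)\cap\cC_b(\Rd)$ of \cref{OptDHJB} satisfies the same two relations, so $\widehat V=\inf_{U\in\Uadm}\cJ_\alpha^{U}(\cdot,c)=V_\alpha$, which is uniqueness. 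The ``if'' part of the selector characterization is the choice of $v^*$ (for any minimizing selector); for ``only if'', if $w\in\Usm$ is optimal then $\cJ_\alpha^{w}(x,c)=V_\alpha(x)$, and the It\^{o}--Krylov identity gives $0=\Exp_x^{w}\int_0^\infty e^{-\alpha s}\bigl[\sL_{w}V_\alpha-\alpha V_\alpha+c(\cdot,w)\bigr](X_s)\,\D s$ with a nonnegative integrand by \cref{OptDHJB}; since (A3) and \cite[Theorem~2.2.12]{ABG-book} imply the $\alpha$-Green kernel of $X$ under $w$ has a density that is strictly positive on $\Rd$, the integrand must vanish a.e., which is precisely \cref{OtpHJBSelc}.

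I expect the genuine obstacles to be analytical rather than probabilistic: securing the uniform-in-$R$ elliptic estimates and enough regularity to conclude $V_\alpha\in\cC^2(\Rd)$ on the unbounded domain $\Rd$, and rigorously justifying the It\^{o}--Krylov computation for general (possibly non-Markov) admissible controls together with the localization needed to pass from $\uptau_R$ to infinity. Once these are in place the remaining steps are routine; the full argument is carried out in \cite[Section~3.5]{ABG-book}.
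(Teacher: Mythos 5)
Your proposal is correct and follows essentially the same route as the paper, which establishes this result by direct appeal to \cite[Theorem~3.5.6]{ABG-book}: truncated Dirichlet problems on $\sB_R$ with uniform $L^\infty$ and interior $\Sob^{2,p}$ bounds, a diagonal limit as $R\to\infty$, It\^{o}--Krylov verification to identify the limit with $V_\alpha$ and obtain uniqueness, and the minimizing-selector characterization (the paper's own remark following the theorem sketches exactly this Dirichlet-truncation argument when relaxing (A4) to (A4\')). No gaps to report.
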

\begin{remark}
The assumption that the running cost is Lipschitz continuous in it's first argument uniformly with respect to the second, is used to obtain a $\cC^2(\Rd)$ solution of the HJB equation \cref{OptDHJB}. If we don't have this uniformly Lipschitz assumption, one can still show that the HJB equation admits a solution now in $\Sobl^{2, p}(\Rd)$,\, $p\geq d+1$ and all the conclusions of the Theorem~\ref{TD1.1} still hold\,. To see this: in view of \cite[Theorem~9.15]{GilTru} and the Schauder fixed point theorem, it can be shown that there exists $\phi_{R}\in \Sob^{2,p}(\sB_R)$ satisfying the Dirichlet problem 
\begin{align*}
\min_{\zeta \in\Act}\left[\sL_{\zeta}\phi_{R}(x) + c(x, \zeta)\right] = \alpha \phi_{R}(x) \,,\quad \text{for all\ }\,\, x\in\sB_R\,,\quad\text{with}\quad
 \phi_{R} = 0\,\,\, \text{on}\,\,\, \partial{\sB_{R}}\,.
\end{align*} Now letting $R\to\infty$ and following \cite[Theorem~3.5.6]{ABG-book} we arrive at the solution. 

Hence, one can replace our assumption (A4) by the following (relatively weaker) assumption
\begin{itemize}
\item[\hypertarget{A4\'}{{(A4\')}}]
The \emph{running cost} $c$ is bounded (i.e., $\|c\|_{\infty} \leq M$ for some positive constant $M$) and jointly continuous in both variables $(x, \zeta)$\,.
\end{itemize}
All the results of this paper will also hold if we replace (A4) by (A4\')\,. 
\end{remark}
As in Theorem~\ref{TD1.1}, following \cite[Theorem~3.5.6]{ABG-book}, for each approximating model we have the following complete characterization of an optimal policy, which is in the space of stationary Markov policies. 
\begin{theorem}\label{TD1.2}
Suppose (A5)(iii) holds. Then for each $n\in \NN$, there exists a unique solution $V_{\alpha}^n\in\cC^2(\Rd)\cap\cC_b(\Rd)$ of
\begin{equation}\label{APOptDHJB1}
\min_{\zeta \in\Act}\left[\sL_{\zeta}^nV_{\alpha}^n(x) + c_n(x, \zeta)\right] = \alpha V_{\alpha}^n(x) \,,\quad \text{for all\ }\,\, x\in\Rd\,.
\end{equation}
Moreover, we have the following:
\begin{itemize}
\item[(i)] $V_{\alpha}^n$ is the optimal discounted cost, i.e.,
\begin{equation*}
V_{\alpha}^n(x) = \inf_{U\in \Uadm}\Exp_x^{U} \left[\int_0^{\infty} e^{-\alpha t} c_n(X_s^n, U_s) \D s\right]\quad x\in\Rd\,,
\end{equation*}
\item[(ii)]$v_n^*\in \Usm$ is $\alpha$-discounted optimal control if and only if it is a measurable minimizing selector of\cref{APOptDHJB1}, i.e.,
\begin{equation}\label{OtpHJBSelc1}
b_n(x,v_n^*(x))\cdot \grad V_{\alpha}^n(x) + c_n(x, v_n^*(x)) = \min_{\zeta\in \Act}\left[ b_n(x, \zeta)\cdot \grad V_{\alpha}^n(x) + c_n(x, \zeta)\right]\quad \text{a.e.}\,\,\, x\in\Rd\,.
\end{equation}
\end{itemize}
\end{theorem}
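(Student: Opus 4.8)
The plan is to recognize that Theorem~\ref{TD1.2} is simply Theorem~\ref{TD1.1} (i.e., \cite[Theorem~3.5.6]{ABG-book}) applied separately to each of the approximating models, so the whole content of the proof is to check that the hypotheses transfer. First I would fix $n\in\NN$ and verify, using Assumption~(A5)(iii), that the triple $(b_n,\upsigma_n,c_n)$ satisfies every structural condition required by Theorem~\ref{TD1.1}: by (A5)(iii) the pair $(b_n,\upsigma_n)$ satisfies (A1)--(A3), so the SDE \cref{ASE1.1} has, for every admissible $U\in\Uadm$, a unique strong solution $X^n$, and under any stationary Markov control the solution is a strong Feller (hence strong Markov) process; moreover $c_n$ is bounded by the same constant $M$, jointly continuous in $(x,\zeta)$, and locally Lipschitz in its first argument uniformly in $\zeta$, i.e.\ it satisfies (A4). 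In particular the operators $\sL^n_\zeta$ defined in \cref{E-cIn} play exactly the role that $\sL_\zeta$ plays in Theorem~\ref{TD1.1}.

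With these hypotheses in hand, I would then invoke Theorem~\ref{TD1.1}, verbatim, with $(b,\upsigma,c,\sL_\zeta)$ replaced by $(b_n,\upsigma_n,c_n,\sL^n_\zeta)$, to conclude: (a) the $\alpha$-discounted HJB equation \cref{APOptDHJB1} has a unique solution $V^n_\alpha$ in $\cC^2(\Rd)\cap\cC_b(\Rd)$; (b) this solution coincides with the optimal discounted cost, $V^n_\alpha(x)=\inf_{U\in\Uadm}\cJ^U_{\alpha,n}(x,c_n)$; and (c) a stationary Markov control $v^*_n\in\Usm$ is $\alpha$-discounted optimal if and only if it is an a.e.\ measurable minimizing selector of the Hamiltonian in \cref{APOptDHJB1}, which is precisely \cref{OtpHJBSelc1}. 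The existence of such a measurable selector is itself part of the cited result and follows from a standard measurable selection argument using the joint continuity of $b_n$ and $c_n$, the regularity of $V^n_\alpha$, and the compactness of $\Act$.

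Since the statement is asserted only ``for each $n\in\NN$'', with no uniformity in $n$ claimed, I do not expect any genuine obstacle: the proof is a one-line appeal to Theorem~\ref{TD1.1} once the hypothesis check is done, and the only point warranting care is bookkeeping, namely that Assumption~(A5)(iii) has been phrased exactly so that every hypothesis of Theorem~\ref{TD1.1} is available for the $n$-th model. I would note, for later use, that the uniform-in-$n$ estimates needed to pass to the continuity statement $V^n_\alpha\to V_\alpha$ are a separate matter, relying additionally on the common bound $\norm{c_n}_\infty\le M$ and on interior elliptic estimates, and are not part of this theorem.
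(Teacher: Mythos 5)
Your proposal is correct and matches the paper's treatment exactly: the paper states Theorem~\ref{TD1.2} with no separate proof, introducing it with the remark that it follows ``as in Theorem~\ref{TD1.1}, following \cite[Theorem~3.5.6]{ABG-book}'' once Assumption~(A5)(iii) supplies the hypotheses (A1)--(A4) for each approximating triple $(b_n,\upsigma_n,c_n)$. Your hypothesis-checking and the closing remark that uniform-in-$n$ estimates are deferred to the continuity theorem are exactly the right bookkeeping.
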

In the next theorem, we prove that $V_{\alpha}^n(x)$ converges to $V_{\alpha}(x)$ as $n\to \infty$ for all $x\in\Rd$\,. This result will be useful in establishing the robustness of discounted optimal controls.
\begin{theorem}\label{TC1.3}
Suppose Assumptions (A1)-(A5) hold. Then 
\begin{equation}\label{EC1.1}
\lim_{n\to\infty} V_{\alpha}^n(x) = V_{\alpha}(x) \quad\text{for all}\,\, x\in\Rd\,.
\end{equation}
\end{theorem}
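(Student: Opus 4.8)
The plan is to combine a priori interior elliptic estimates (to get compactness of the family $\{V_\alpha^n\}$) with a passage to the limit in the HJB equation \cref{APOptDHJB1}, using the identification of the limit via uniqueness in \Cref{TD1.1}. First I would note that by \Cref{TD1.2}(i) and the uniform bound $\norm{c_n}_\infty \le M$ from (A5)(iii), we have the uniform estimate $\norm{V_\alpha^n}_\infty \le M/\alpha$ for all $n$. Fix any ball $\sB_R$. Since $b_n, \upsigma_n$ satisfy (A1)--(A3) with constants that, by virtue of (A5)(ii)--(iii) and the affine growth/nondegeneracy bounds, can be taken locally uniform in $n$ on $\sB_{R+1}$ (the continuous convergence in (A5)(ii) together with joint continuity forces the local Lipschitz and ellipticity constants of the limiting data to dominate, up to a harmless constant, those of $b_n,\upsigma_n$ for large $n$; alternatively one argues with $\sup_n$ of these constants which is finite on compacts), the function $V_\alpha^n$ solves a uniformly elliptic equation $\sL^n_{v_n^*} V_\alpha^n - \alpha V_\alpha^n = -c_n(\cdot,v_n^*(\cdot))$ on $\sB_{R+1}$ with bounded right-hand side. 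By the interior $\Sob^{2,p}$ estimate (e.g.\ \cite[Theorem~9.11]{GilTru}) applied on $\sB_{R+1}\supset\sB_R$, for any $p\in(1,\infty)$,
\begin{equation*}
\norm{V_\alpha^n}_{\Sob^{2,p}(\sB_R)} \le C\bigl(\norm{V_\alpha^n}_{\Lp^p(\sB_{R+1})} + \norm{c_n}_{\Lp^p(\sB_{R+1})}\bigr) \le C'(R,p,\alpha,M)\,,
\end{equation*}
with $C'$ independent of $n$. Choosing $p>d$ and using the compact Sobolev embedding $\Sob^{2,p}(\sB_R)\hookrightarrow\hookrightarrow\cC^{1,\beta}(\overline{\sB_R})$, the family $\{V_\alpha^n\}$ is precompact in $\cC^{1,\beta}(\overline{\sB_R})$ for every $R$.

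Next I would extract, by a diagonal argument over $R=1,2,\dots$, a subsequence (not relabeled) and a function $V_*\in\Sobl^{2,p}(\Rd)\cap\cC^1_{\mathrm{loc}}(\Rd)$ with $\norm{V_*}_\infty\le M/\alpha$ such that $V_\alpha^n\to V_*$ in $\cC^1_{\mathrm{loc}}(\Rd)$ and weakly in $\Sobl^{2,p}(\Rd)$. The crux is then to pass to the limit in \cref{APOptDHJB1}. Rewrite the equation in nondivergence form as $\trace(a_n\grad^2 V_\alpha^n) + F_n(x,\grad V_\alpha^n) = \alpha V_\alpha^n$, where $F_n(x,p):=\min_{\zeta\in\Act}\bigl[b_n(x,\zeta)\cdot p + c_n(x,\zeta)\bigr]$. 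I would show the pointwise (a.e.\ $x$) convergence $F_n(x,\grad V_\alpha^n(x))\to F(x,\grad V_*(x))$ where $F(x,p):=\min_{\zeta}[b(x,\zeta)\cdot p + c(x,\zeta)]$: this uses the continuous convergence in (A5)(ii) together with compactness of $\Act$ — for a minimizing sequence $\zeta_n$ of $F_n(x,\cdot)$ pass to a convergent subsequence $\zeta_n\to\zeta$ and use $b_n(x,\zeta_n)\to b(x,\zeta)$, $c_n(x,\zeta_n)\to c(x,\zeta)$ plus $\grad V_\alpha^n(x)\to\grad V_*(x)$ to get $\liminf F_n \ge F$, and for the reverse inequality feed a fixed near-optimal $\zeta$ for $F(x,\cdot)$ into $F_n$; the a.e.\ $x$ qualifier is inherited from \cref{AproxiE1,AproxiE2}. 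Likewise $a_n\to a$ a.e. Since $\grad^2 V_\alpha^n\rightharpoonup\grad^2 V_*$ weakly in $\Lp^p_{\mathrm{loc}}$ and $a_n\to a$ a.e.\ with $\norm{a_n}$ locally bounded (dominated convergence gives $a_n\to a$ in $\Lp^q_{\mathrm{loc}}$ for all finite $q$), the product $a_n\grad^2 V_\alpha^n\rightharpoonup a\grad^2 V_*$ in $\Lp^1_{\mathrm{loc}}$; combining, $V_*$ solves the limiting HJB equation \cref{OptDHJB} in the a.e.\ (strong $\Sobl^{2,p}$) sense on all of $\Rd$. Standard elliptic regularity under (A4) then upgrades $V_*$ to $\cC^2(\Rd)$, and $V_*\in\cC_b(\Rd)$ by the sup bound; by the uniqueness clause of \Cref{TD1.1}, $V_*=V_\alpha$.

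Finally, since the limit $V_\alpha$ is the same for every convergent subsequence, a standard subsequence argument gives $V_\alpha^n\to V_\alpha$ in $\cC^1_{\mathrm{loc}}(\Rd)$, in particular pointwise, which is \cref{EC1.1}. I expect the main obstacle to be the rigorous justification that the structural constants in (A1)--(A3) for $b_n,\upsigma_n$ can be taken uniform in $n$ on compact sets — i.e.\ that the a priori estimates above genuinely have $n$-independent constants — since (A5) only posits a.e.\ pointwise convergence of $\upsigma_n$ and continuous convergence of $b_n,c_n$, not uniform control of Lipschitz or ellipticity constants; one must argue this out from joint continuity of the limiting data and an equicontinuity/Egorov-type argument, or else weaken the claimed mode of convergence (e.g.\ to $\Lp^p_{\mathrm{loc}}$ weak limits handled purely in Sobolev spaces, avoiding $\cC^{1}$ compactness of the coefficients). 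A secondary delicate point is that the minimizing selectors $v_n^*$ need not converge, so the limit must be taken at the level of the fully nonlinear equation (via the $\min$ over $\Act$), not at the level of the linearized equations — which is exactly why the PDE/HJB formulation, rather than a probabilistic Dynkin-formula argument, is the natural vehicle here.
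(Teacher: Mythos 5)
Your proposal is correct and follows essentially the same route as the paper's proof: uniform interior $\Sob^{2,p}$ estimates driven by the sup bound $M/\alpha$, compact embedding into $\cC^{1,\beta}_{loc}$ plus a diagonal argument, passage to the limit in the fully nonlinear HJB equation using the continuous convergence in the control variable, and identification of the subsequential limit with $V_{\alpha}$. The only divergence is at the identification step: you upgrade $V_*$ to $\cC^2$ by Schauder regularity and invoke the uniqueness clause of Theorem~\ref{TD1.1}, whereas the paper re-runs the It\^o--Krylov verification argument to show directly that the limit equals $\inf_{U}\cJ_{\alpha}^{U}(x,c)$ (which avoids the $\cC^2$ upgrade and also covers the relaxed version of (A4)); the two are equivalent in substance. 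The concern you flag about the $n$-uniformity of the constants in the elliptic estimates is legitimate, but it is equally present in the paper, which simply asserts that $\kappa_1$ is independent of $n$; making this rigorous does require locally uniform (in $n$) ellipticity and boundedness of $a_n$, which does not follow from the a.e.\ convergence in (A5)(i) alone.
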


\begin{proof}
From \cref{APOptDHJB1} and \cref{OtpHJBSelc1} for any minimizing selector $v_n^*\in \Usm$, it follows that
\begin{equation*}
\trace\bigl(a_n(x)\grad^2 V_{\alpha}^n(x)\bigr) + b_n(x,v_n^*(x))\cdot \grad V_{\alpha}^n(x) + c_n(x, v_n^*(x)) = \alpha V_{\alpha}^n(x)\,.
\end{equation*}
%Let $K_n$ be an increasing sequence of compact sets in $\Rd$ such that $\cup_{n}K_n = \Rd$. For each $m\in\NN$ there exists $R>0$ such that $K_m\subset \sB_{R}$. 
Then using the standard elliptic PDE estimate as in \cite[Theorem~9.11]{GilTru}, for any $p\geq d+1$ and $R >0$, we deduce that
\begin{equation}\label{ETC1.3A}
\norm{V_{\alpha}^n(x)}_{\Sob^{2,p}(\sB_R)}
\,\le\, \kappa_1\bigl(\norm{V_{\alpha}^n(x)}_{L^p(\sB_{2R})} + \norm{c_n(x, v_n^*(x))}_{L^p(\sB_{2R})}\bigr)\,,
\end{equation}
where $\kappa_1$ is a positive constant which is independent of $n$\,. Since 
\begin{equation*}
\norm{c_n}_{\infty} \,\df\, \sup_{(x,u)\in\Rd\times\Act} c_n(x,u) \leq M, \quad \text{and}\quad V_{\alpha}^n(x) \leq \frac{\norm{c_n}_{\infty}}{\alpha}\,,
\end{equation*} from \cref{ETC1.3A} we get
\begin{equation}\label{ETC1.3B}
\norm{V_{\alpha}^n(x)}_{\Sob^{2,p}(\sB_R)}
\,\le\, \kappa_1 M\bigl(\frac{|\sB_{2R}|^{\frac{1}{p}}}{\alpha} + |\sB_{2R}|^{\frac{1}{p}}\bigr)\,.
\end{equation}We know that for $1< p < \infty$, the space $\Sob^{2,p}(\sB_R)$ is reflexive and separable, hence, as a corollary of the Banach-Alaoglu theorem, we have that every bounded sequence in $\Sob^{2,p}(\sB_R)$ has a weakly convergent subsequence (see, \cite[Theorem~3.18.]{HB-book}). Also, we know that for $p\geq d+1$ the space $\Sob^{2,p}(\sB_R)$ is compactly embedded in $\cC^{1, \beta}(\bar{\sB}_R)$\,, where $\beta < 1 - \frac{d}{p}$ (see \cite[Theorem~A.2.15 (2b)]{ABG-book}), which implies that every weakly convergent sequence in $\Sob^{2,p}(\sB_R)$ will converge strongly in $\cC^{1, \beta}(\bar{\sB}_R)$\,. Thus, in view of estimate \cref{ETC1.3B}, by a standard diagonalization argument and the Banach-Alaoglu theorem, we can extract a subsequence $\{V_{\alpha}^{n_k}\}$ such that for some $V_{\alpha}^*\in \Sobl^{2,p}(\Rd)$
\begin{equation}\label{ETC1.3BC}
\begin{cases}
V_{\alpha}^{n_k}\to & V_{\alpha}^*\quad \text{in}\quad \Sobl^{2,p}(\Rd)\quad\text{(weakly)}\\
V_{\alpha}^{n_k}\to & V_{\alpha}^*\quad \text{in}\quad \cC^{1, \beta}_{loc}(\Rd) \quad\text{(strongly)}\,.
\end{cases}       
\end{equation} 
In the following, we will show that $V^*_{\alpha} = V_{\alpha}$. Now, for any compact set $K\subset \Rd$, it is easy to see that
\begin{align}\label{ETC1.3C1A}
&\max_{x\in K}|\min_{\zeta\in \Act}\{ b_{n_k}(x,\zeta)\cdot \grad V_{\alpha}^{n_k}(x) + c(x, \zeta)\} - \min_{\zeta\in \Act}\{b(x,\zeta)\cdot \grad V_{\alpha}^*(x) + c(x, \zeta)\}|\nonumber\\
&\, \leq \, \max_{x\in K}\max_{\zeta\in \Act}|\{ b_{n_k}(x,\zeta)\cdot \grad V_{\alpha}^{n_k}(x) + c_n(x, \zeta)\} - \{b(x,\zeta)\cdot \grad V_{\alpha}^*(x) + c(x, \zeta)\}| \nonumber\\
&\, \leq \, \max_{x\in K}\max_{\zeta\in \Act}| b_{n_k}(x,\zeta)\cdot \grad V_{\alpha}^{n_k}(x) -b(x,\zeta)\cdot \grad V_{\alpha}^*(x) | + \max_{x\in K}\max_{\zeta\in \Act}| c_{n_k}(x, \zeta) - c(x, \zeta)|
\end{align}
Since $c_{n}(x, \cdot)\to c(x, \cdot)$, $b_{n}(x, \cdot)\to b(x, \cdot)$ continuously on compact set $\Act$ and $V_{\alpha}^{n_k}\to V_{\alpha}^*$ in $\cC^{1, \beta}_{loc}(\Rd)\,,$ for any compact set $K\subset \Rd$, as $k\to \infty$ we deduce that 
\begin{align}\label{ETC1.3C}
\max_{x\in K}|\min_{\zeta\in \Act}\{ b_{n_k}(x,\zeta)\cdot \grad V_{\alpha}^{n_k}(x) + c_{n_k}(x, \zeta)\} - \min_{\zeta\in \Act}\{b(x,\zeta)\cdot \grad V_{\alpha}(x) + c(x, \zeta)\}|\to 0
\end{align}
Thus, multiplying by a test function $\phi\in \cC_{c}^{\infty}(\Rd)$, from \cref{APOptDHJB1}, we obtain
\begin{equation*}
\int_{\Rd}\trace\bigl(a_{n_k}(x)\grad^2 V_{\alpha}^{n_k}(x)\bigr)\phi(x)\D x + \int_{\Rd} \min_{\zeta\in \Act} \{b_{n_k}(x,\zeta)\cdot \grad V_{\alpha}^{n_k}(x) + c_{n_k}(x, \zeta)\}\phi(x)\D x = \alpha\int_{\Rd} V_{\alpha}^{n_k}(x)\phi(x)\D x\,.
\end{equation*}
In view of \cref{ETC1.3BC} and \cref{ETC1.3C}, letting $k\to\infty$ it follows that
\begin{equation}\label{ETC1.3D}
\int_{\Rd}\trace\bigl(a(x)\grad^2 V_{\alpha}^*(x)\bigr)\phi(x)\D x + \int_{\Rd} \min_{\zeta\in \Act} \{b(x,\zeta)\cdot \grad V_{\alpha}^*(x) + c(x, \zeta)\}\phi(x)\D x = \alpha\int_{\Rd} V_{\alpha}^*(x)\phi(x)\D x\,.
\end{equation} Since $\phi\in \cC_{c}^{\infty}(\Rd)$ is arbitrary and $V_{\alpha}^*\in \Sobl^{2,p}(\Rd)$ from \cref{{ETC1.3D}} we deduce that
\begin{equation}\label{ETC1.3E}
\min_{\zeta \in\Act}\left[\sL_{\zeta}V_{\alpha}^*(x) + c(x, \zeta)\right] = \alpha V_{\alpha}^*(x) \,,\quad \text{a.e.\ }\,\, x\in\Rd\,.
\end{equation}
Let $\tilde{v}^*\in\Usm$ be a minimizing selector of \cref{ETC1.3E} and $\tilde{X}$ be the solution of the SDE \cref{E1.1} corresponding to $\tilde{v}^*$. Then applying It$\hat{\rm o}$-Krylov formula, we obtain the following
\begin{align*}
& \Exp_x^{\tilde{v}^*}\left[ e^{-\alpha T} V_{\alpha}^{*}(\tilde{X}_{T})\right] - V_{\alpha}^{*}(x)\nonumber\\ 
& \,=\,\Exp_x^{\tilde{v}^*}\left[\int_0^{T} e^{-\alpha s}\{\trace\bigl(a(\tilde{X}_s)\grad^2 V_{\alpha}^{*}(\tilde{X}_s)\bigr) + b(\tilde{X}_s, \tilde{v}^*(\tilde{X}_s))\cdot \grad V_{\alpha}^{*}(\tilde{X}_s) - \alpha V_{\alpha}^{*}(\tilde{X}_s))\} \D{s}\right] \,.
\end{align*}
Hence, using \cref{ETC1.3E}, we deduce that
\begin{align}\label{ETC1.3FC}
\Exp_x^{\tilde{v}^*}\left[ e^{-\alpha T} V_{\alpha}^{*}(\tilde{X}_{T})\right] - V_{\alpha}^{*}(x) \,=\,- \Exp_x^{\tilde{v}^*}\left[\int_0^{T} e^{-\alpha s}c(\tilde{X}_s, \tilde{v}^*(\tilde{X}_s))\D{s}\right] \,.
\end{align}
Since $V_{\alpha}^{*}$ is bounded and
$$\Exp_x^{\tilde{v}^*}\left[ e^{-\alpha T} V_{\alpha}^{*}(\tilde{X}_{T})\right] = e^{-\alpha T}\Exp_x^{\tilde{v}^*}\left[ V_{\alpha}^{*}(\tilde{X}_{T})\right],$$
letting $T\to\infty$, it is easy to see that
\begin{equation*}
\lim_{T\to\infty}\Exp_x^{\tilde{v}^*}\left[ e^{-\alpha T} V_{\alpha}^{*}(\tilde{X}_{T})\right] = 0\,.
\end{equation*}
Now, letting $T \to \infty$ by monotone convergence theorem, from \cref{ETC1.3FC} we obtain 
\begin{align}\label{ETC1.3FD}
 V_{\alpha}^{*}(x) \,=\, \Exp_x^{\tilde{v}^*}\left[\int_0^{\infty} e^{-\alpha s}c(\tilde{X}_s, \tilde{v}^*(\tilde{X}_s)) \D{s}\right] \,.
\end{align}
Again by similar argument, applying It$\hat{\rm o}$-Krylov formula and using \cref{ETC1.3E}, for any $U\in \Uadm$\,, we have
\begin{align*}
 V_{\alpha}^{*}(x) \,
 \leq\, \Exp_x^{U}\left[\int_0^{\infty} e^{-\alpha s}c(\tilde{X}_s, U_s) \D{s}\right] \,.
\end{align*}This implies
\begin{align}\label{ETC1.3FE}
 V_{\alpha}^{*}(x) \,
 \leq\,\inf_{U\in\Uadm} \Exp_x^{U}\left[\int_0^{\infty} e^{-\alpha s}c(\tilde{X}_s, U_s) \D{s}\right] \,.
\end{align}
 Thus, from \cref{ETC1.3FD} and \cref{ETC1.3FE}, we deduce that
\begin{align}\label{ETC1.3FF}
 V_{\alpha}^{*}(x) \,=
 \, \inf_{U\in\Uadm}\Exp_x^{U}\left[\int_0^{\infty} e^{-\alpha s}c(\tilde{X}_s, U_s) \D{s}\right] \,.
\end{align}
Since both $V_{\alpha}, V_{\alpha}^*$ are continuous functions on $\Rd$, from \cref{OPDcost} and \cref{ETC1.3FF}, it follows that $V_{\alpha}(x) = V_{\alpha}^*(x)$ for all $x\in\Rd$. This completes the proof.
\end{proof}
Let $\hat{X}^n$ be the solution of the SDE \cref{E1.1} corresponding to $v_n^*$. Then we have
\begin{equation}\label{EDiscostRobust}
\cJ_{\alpha}^{v_n^*}(x, c) \,=\, \Exp_x^{v_n^*} \left[\int_0^{\infty} e^{-\alpha t} c(\hat{X}^n_s, v_n^*(\hat{X}^n_s)) \D s\right],\quad x\in\Rd\,.
\end{equation}
Next we prove the robustness result, i.e., we prove that $\cJ_{\alpha}^{v_n^*}(x, c) \to \cJ_{\alpha}^{v^*}(x, c)$ as $n\to \infty$\,, where $v_n^*$ is an optimal control of the approximated model and $v^*$ is an optimal control of the true model. As in \cite{KY-20} we will use the continuity result above as an intermediate step.
\begin{theorem}\label{TC1.4}
Suppose Assumptions (A1)-(A5) hold. Then 
\begin{equation}\label{ETC1.4A}
\lim_{n\to\infty} \cJ_{\alpha}^{v_n^*}(x, c) = \cJ_{\alpha}^{v^*}(x, c) \quad\text{for all}\,\, x\in\Rd\,.
\end{equation}
\end{theorem}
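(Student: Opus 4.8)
The plan is to run the PDE--compactness scheme of the proof of \cref{TC1.3}, but now applied to the sequence of mismatch--cost functions $\hat\phi_n(x)\df\cJ_{\alpha}^{v_n^*}(x,c)$ (see \cref{EDiscostRobust}), and then to identify every subsequential limit with $V_\alpha=\cJ_\alpha^{v^*}(\cdot,c)$. First, since $v_n^*\in\Usm$ is a measurable selector, $\hat\phi_n$ is (by the It\^o--Krylov/Feynman--Kac representation of solutions of linear elliptic equations under a fixed stationary Markov control, cf.\ \cite{ABG-book}) the unique solution in $\Sobl^{2,p}(\Rd)\cap\cC_b(\Rd)$, $p\ge d+1$, of the \emph{linear} equation
\begin{equation*}
\trace\bigl(a(x)\grad^2\hat\phi_n(x)\bigr) + b(x,v_n^*(x))\cdot\grad\hat\phi_n(x) + c(x,v_n^*(x)) \,=\, \alpha\hat\phi_n(x)\,,
\end{equation*}
with $0\le\hat\phi_n\le M/\alpha$. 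Because here the leading coefficient $a$ is the \emph{true} one (hence fixed in $n$) and $\babs{b(\cdot,v_n^*(\cdot))}$ is bounded on every ball uniformly in $n$ by (A1)--(A2), the interior estimate \cite[Theorem~9.11]{GilTru} gives $\sup_n\norm{\hat\phi_n}_{\Sob^{2,p}(\sB_R)}<\infty$ for each $R>0$, exactly as in \cref{ETC1.3A}--\cref{ETC1.3B}.

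Next I would extract limits. By the Banach--Alaoglu theorem, the compact embedding $\Sob^{2,p}(\sB_R)\hookrightarrow\cC^{1,\beta}(\bar{\sB}_R)$ ($\beta<1-\nicefrac{d}{p}$), and a diagonal argument, along a subsequence $\hat\phi_{n_k}\to\hat\phi^*$ weakly in $\Sobl^{2,p}(\Rd)$ and strongly in $\cC^{1,\beta}_{loc}(\Rd)$, with $0\le\hat\phi^*\le M/\alpha$; and, since $\Usm$ is compact metrizable, after a further subsequence $v_{n_k}^*\to v_\infty$ in $\Usm$. Recall from the proof of \cref{TC1.3} that $V_\alpha^n\to V_\alpha$ in $\cC^{1,\beta}_{loc}(\Rd)$. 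Testing the $\hat\phi_{n_k}$--equation against $\varphi\in\cC_{\mathrm c}^\infty(\Rd)$ and letting $k\to\infty$: the second--order term passes by weak $\Sobl^{2,p}$--convergence; for the first--order term one writes $b(\cdot,v_{n_k}^*(\cdot))=\int_{\Act}b(\cdot,\zeta)v_{n_k}^*(\cdot)(\D\zeta)$, which converges weakly in $\Lp^2_{loc}(\Rd)$ to $b(\cdot,v_\infty(\cdot))$ (this is precisely the defining convergence in $\Usm$, tested against $g(x,\zeta)=\chi(x)b^i(x,\zeta)\in\cC_b(\Rd\times\Act)$ with $\chi\in\cC_{\mathrm c}^\infty$ a cutoff), and multiplies it against the strongly convergent $\grad\hat\phi_{n_k}\to\grad\hat\phi^*$; the zeroth--order term passes by $\Usm$--convergence tested against $g=c\in\cC_b(\Rd\times\Act)$. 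Hence $\hat\phi^*$ is a bounded $\Sobl^{2,p}$--solution of $\sL_{v_\infty}\hat\phi^*+c(\cdot,v_\infty)=\alpha\hat\phi^*$ a.e., and, applying the It\^o--Krylov formula and letting $T\to\infty$ as in \cref{ETC1.3FC}--\cref{ETC1.3FD}, $\hat\phi^*=\cJ_\alpha^{v_\infty}(\cdot,c)$.

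The crux, and the main obstacle, is to show $\hat\phi^*=V_\alpha$, i.e.\ that $v_\infty$ is optimal for the true model; by \cref{TD1.1} it suffices to show $v_\infty$ is a minimizing selector of \cref{OptDHJB}. Starting from the selector identity \cref{OtpHJBSelc1} for model $n_k$, for every fixed $\zeta\in\Act$ and every $\varphi\in\cC_{\mathrm c}^\infty(\Rd)$ with $\varphi\ge0$,
\begin{equation*}
\int_{\Rd}\bigl[b_{n_k}(x,v_{n_k}^*(x))\cdot\grad V_\alpha^{n_k}(x)+c_{n_k}(x,v_{n_k}^*(x))\bigr]\varphi(x)\,\D x \,\le\, \int_{\Rd}\bigl[b_{n_k}(x,\zeta)\cdot\grad V_\alpha^{n_k}(x)+c_{n_k}(x,\zeta)\bigr]\varphi(x)\,\D x\,.
\end{equation*}
On compact sets, continuous convergence (A5)(ii) towards the continuous limits $b,c$ upgrades to uniform convergence $\sup_{\zeta\in\Act}\bigl(\babs{b_{n_k}(\cdot,\zeta)-b(\cdot,\zeta)}+\babs{c_{n_k}(\cdot,\zeta)-c(\cdot,\zeta)}\bigr)\to0$; combined with $V_\alpha^{n_k}\to V_\alpha$ in $\cC^{1,\beta}_{loc}$ and $v_{n_k}^*\to v_\infty$ in $\Usm$, I can pass to the limit on both sides (the left side as in the previous paragraph, the right side by plain uniform--on--compacts convergence), obtaining $\int[b(x,v_\infty(x))\cdot\grad V_\alpha(x)+c(x,v_\infty(x))]\varphi\le\int[b(x,\zeta)\cdot\grad V_\alpha(x)+c(x,\zeta)]\varphi$. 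Since $\varphi\ge0$ is arbitrary and $\Act$ is separable (so the exceptional null set can be taken independent of $\zeta$ by density and continuity of $\zeta\mapsto b(x,\zeta)\cdot\grad V_\alpha(x)+c(x,\zeta)$), one gets $b(x,v_\infty(x))\cdot\grad V_\alpha(x)+c(x,v_\infty(x))\le\min_{\zeta\in\Act}[b(x,\zeta)\cdot\grad V_\alpha(x)+c(x,\zeta)]$ for a.e.\ $x$, the reverse inequality being trivial. Thus $v_\infty$ is a minimizing selector of \cref{OptDHJB}, so by \cref{TD1.1} it is $\alpha$--discounted optimal for the true model and $\hat\phi^*=\cJ_\alpha^{v_\infty}(\cdot,c)=V_\alpha$. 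Finally, since every subsequence of $\{\hat\phi_n\}$ admits a further subsequence converging locally uniformly to $V_\alpha$, the full sequence converges, which gives $\cJ_\alpha^{v_n^*}(x,c)\to V_\alpha(x)=\cJ_\alpha^{v^*}(x,c)$ for every $x\in\Rd$. The delicate points are all concentrated in the two passages to the limit in terms of the form $b_{n_k}(x,v_{n_k}^*(x))\cdot\grad(\cdot)$, where the model index, the control argument, and the gradient vary simultaneously; these are handled by first replacing $b_{n_k}$ by $b$ via uniform--on--compacts convergence, and then using that the product of a $\Usm$--weakly convergent sequence with a $\cC^1_{loc}$--strongly convergent gradient converges.
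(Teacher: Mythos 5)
Your proposal is correct, and its machinery (linear elliptic equation satisfied by $\cJ_{\alpha}^{v_n^*}(\cdot,c)$, uniform $\Sob^{2,p}(\sB_R)$ estimates, Banach--Alaoglu plus the compact embedding into $\cC^{1,\beta}(\bar\sB_R)$, compactness of $\Usm$, and the weak-times-strong limit passage for the drift term) coincides with the paper's. Where you diverge is in the final identification and the bookkeeping. The paper splits $\abs{\cJ_{\alpha}^{v_{n_k}^*}(x,c)-\cJ_{\alpha}^{v^*}(x,c)}$ by the triangle inequality through $V_{\alpha}^{n_k}(x)$, kills the second piece by \cref{TC1.3}, and handles the first by showing that the two sequences $\cJ_{\alpha}^{v_{n_k}^*}(\cdot,c)$ and $V_{\alpha}^{n_k}$ converge to the \emph{same} limit: it passes to the limit in the approximate HJB equation simultaneously in its min-form and in its selector form (\cref{ETC1.4I}), so that the common limit $\tilde V_\alpha^*$ satisfies both the true HJB equation and the linear equation under the limit policy $\hat v^*$, whence $\tilde V_\alpha^*=\cJ_\alpha^{\hat v^*}(\cdot,c)=V_\alpha$. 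You instead prove directly that the $\Usm$-limit $v_\infty$ of the selectors is a minimizing selector of \cref{OptDHJB}, by passing to the limit in the family of selector inequalities indexed by fixed $\zeta\in\Act$ and then removing the $\zeta$-dependence of the null set via separability of $\Act$; optimality of $v_\infty$ then follows from the verification part of \cref{TD1.1}, and the subsubsequence principle closes the argument. The two routes need exactly the same limit passages (in particular both rely on upgrading (A5)(ii) to uniform-on-compacts convergence, which the paper also does in \cref{ETC1.3C}, implicitly using the uniform local Lipschitz part of (A5)(iii)); yours makes the closedness of the minimizing-selector correspondence explicit, which is a slightly sharper statement, while the paper's ``two sequences, one limit'' device avoids the pointwise $\zeta$-by-$\zeta$ argument. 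Both are sound.
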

\begin{proof}
Following the argument as in \cite[Theorem~3.5.6]{ABG-book}, one can show that for each $v_n^*\in \Usm$, there exists $V_{\alpha}^{n,*}\in \Sobl^{2,p}(\Rd)\cap \cC_{b}(\Rd)$ satisfying 
\begin{equation}\label{ETC1.4B}
\trace\bigl(a(x)\grad^2 V_{\alpha}^{n,*}(x)\bigr) + b(x,v_n^*(x))\cdot \grad V_{\alpha}^{n,*}(x) + c(x, v_n^*(x)) = \alpha V_{\alpha}^{n,*}(x)\,.
\end{equation}
Applying It$\hat{\rm o}$-Krylov formula, we deduce that
\begin{align*}
& \Exp_x^{v_n^*}\left[ e^{-\alpha T} V_{\alpha}^{n,*}(\hat{X}^n_{T})\right] - V_{\alpha}^{n,*}(x)\nonumber\\ 
& \,=\,\Exp_x^{v_n^*}\left[\int_0^{T} e^{-\alpha s}\{\trace\bigl(a(\hat{X}^n_s)\grad^2 V_{\alpha}^{n,*}(\hat{X}^n_s)\bigr) + b(\hat{X}^n_s, v_n^*(\hat{X}^n_s))\cdot \grad V_{\alpha}^{n,*}(\hat{X}^n_s) - \alpha V_{\alpha}^{n,*}(\hat{X}^n_s))\} \D{s}\right] \,.
\end{align*}
Now using \cref{ETC1.4B}, it follows that
\begin{align}\label{ETC1.4C}
\Exp_x^{v_n^*}\left[ e^{-\alpha T} V_{\alpha}^{n,*}(\hat{X}^n_{T})\right] - V_{\alpha}^{n,*}(x) \,=\,- \Exp_x^{v_n^*}\left[\int_0^{T} e^{-\alpha s}c(\hat{X}^n_s, v_n^*(\hat{X}^n_s)) \D{s}\right] \,.
\end{align}
Since $V_{\alpha}^{n,*}$ is bounded and
$$\Exp_x^{v_n^*}\left[ e^{-\alpha T} V_{\alpha}^{n,*}(\hat{X}^n_{T})\right] = e^{-\alpha T}\Exp_x^{v_n^*}\left[ V_{\alpha}^{n,*}(\hat{X}^n_{T})\right],$$
letting $T\to\infty$ we deduce that
\begin{equation*}
\lim_{T\to\infty}\Exp_x^{v_n^*}\left[ e^{-\alpha T} V_{\alpha}^{n,*}(\hat{X}^n_{T})\right] = 0\,.
\end{equation*}
Thus, from \cref{ETC1.4C}, letting $T \to \infty$ by monotone convergence theorem we obtain
\begin{align}\label{ETC1.4D}
 V_{\alpha}^{n,*}(x) \,=\, \Exp_x^{v_n^*}\left[\int_0^{\infty} e^{-\alpha s}c(\hat{X}^n_s, v_n^*(\hat{X}^n_s)) \D{s}\right] \,=\, \cJ_{\alpha}^{v_n^*}(x, c)\,.
\end{align}
This implies that $V_{\alpha}^{n,*} \leq \frac{\norm{c}_{\infty}}{\alpha}$. Thus, as in Theorem \ref{TC1.3} (see, \cref{ETC1.3A}, \cref{ETC1.3B}), by standard Sobolev estimate, for any $R>0$ we get
$\norm{V_{\alpha}^{n,*}}_{\Sob^{2,p}(\sB_R)} \leq \kappa_2$\,, for some positive constant $\kappa_2$ independent of $n$. Hence, by the Banach-Alaoglu theorem and standard diagonalization argument (as in \cref{ETC1.3BC}), we have there exists $\hat{V}_{\alpha}^*\in \Sobl^{2,p}(\Rd)\cap \cC_{b}(\Rd)$ such that along some sub-sequence $\{V_{\alpha}^{n_{k},*}\}$ 
\begin{equation}\label{ETC1.4E}
\begin{cases}
V_{\alpha}^{n_{k},*}\to & \hat{V}_{\alpha}^*\quad \text{in}\quad \Sobl^{2,p}(\Rd)\quad\text{(weakly)}\\
V_{\alpha}^{n_{k},*}\to & \hat{V}_{\alpha}^*\quad \text{in}\quad \cC^{1, \beta}_{loc}(\Rd)\quad\text{(strongly)}\,.
\end{cases}       
\end{equation} Since space of stationary Markov strategies $\Usm$ is compact,  along some further sub-sequence (without loss of generality denoting by same sequence) we have $v_{n_k}^*\to \hat{v}^*$ in $\Usm$\,. It is easy to see that
\begin{align*}
b(x,v_{n_k}^*(x))\cdot \grad V_{\alpha}^{n_{k}, *}(x) - b(x,\hat{v}^*(x))\cdot \grad \hat{V}_{\alpha}^*(x) = & b(x,v_{n_k}^*(x))\cdot \grad \left(V_{\alpha}^{n_{k}, *} - \hat{V}_{\alpha}^*\right)(x) \\
& + \left(b(x,v_{n_k}^*(x)) - b(x,\hat{v}^*(x))\right)\cdot \grad \hat{V}_{\alpha}^*(x)\,.
\end{align*}
Since $V_{\alpha}^{n_k, *}\to \hat{V}_{\alpha}^*$ in $\cC^{1, \beta}_{loc}(\Rd)\,,$ on any compact set $b(x,v_{n_k}^*(x))\cdot \grad \left(V_{\alpha}^{n_{k}, *} - \hat{V}_{\alpha}^*\right)(x)\to 0$ strongly and by the topology of $\Usm$, we have $\left(b(x,v_{n_k}^*(x)) - b(x,\hat{v}^*(x))\right)\cdot \grad \hat{V}_{\alpha}^*(x)\to 0$ weakly. Thus, in view of the topology of $\Usm$, and since $V_{\alpha}^{n_k, *}\to \hat{V}_{\alpha}^*$ in $\cC^{1, \beta}_{loc}(\Rd)\,,$ as $k\to \infty$ we obtain 
\begin{equation}\label{ETC1.4EA}
b(x,v_{n_k}^*(x))\cdot \grad V_{\alpha}^{n_{k}, *}(x) + c(x, v_{n_k}^*(x)) \to b(x,\hat{v}^*(x))\cdot \grad \hat{V}_{\alpha}^*(x) + c(x, \hat{v}^*(x))\quad\text{weakly}\,.
\end{equation}
Now, multiplying by a test function $\phi\in \cC_{c}^{\infty}(\Rd)$, from \cref{ETC1.4B}, it follows that
\begin{align*}
\int_{\Rd}\trace\bigl(a(x)\grad^2 V_{\alpha}^{n_{k}, *}(x)\bigr)\phi(x)\D x + \int_{\Rd}\{b(x,v_{n_k}^*(x))\cdot \grad V_{\alpha}^{n_{k}, *}(x) + & c(x, v_{n_k}^*(x))\}\phi(x)\D x \\
&= \alpha\int_{\Rd} V_{\alpha}^{n_{k}, *}(x)\phi(x)\D x\,.
\end{align*}
Hence, using \cref{ETC1.4E}, \cref{ETC1.4EA}, and letting $k\to\infty$ we obtain
\begin{equation}\label{ETC1.4EB}
\int_{\Rd}\trace\bigl(a(x)\grad^2 \hat{V}_{\alpha}^*(x)\bigr)\phi(x)\D x + \int_{\Rd} \{b(x,\hat{v}^*(x))\cdot \grad \hat{V}_{\alpha}^*(x) + c(x, \hat{v}^*(x))\}\phi(x)\D x = \alpha\int_{\Rd} \hat{V}_{\alpha}^*(x)\phi(x)\D x\,.
\end{equation} Since $\phi\in \cC_{c}^{\infty}(\Rd)$ is arbitrary and $\hat{V}_{\alpha}^*\in \Sobl^{2,p}(\Rd)$ from \cref{ETC1.4EB}, we deduce that
the function $\hat{V}_{\alpha}^*\in \Sobl^{2,p}(\Rd)\cap \cC_{b}(\Rd)$ satisfies
\begin{equation}\label{ETC1.4F}
\trace\bigl(a(x)\grad^2 \hat{V}_{\alpha}^{*}(x)\bigr) + b(x,\hat{v}^*(x))\cdot \grad \hat{V}_{\alpha}^{*}(x) + c(x, \hat{v}^*(x)) = \alpha \hat{V}_{\alpha}^{*}(x)\,.
\end{equation}
As earlier, applying It$\hat{\rm o}$-Krylov formula and using \cref{{ETC1.4F}}, it follows that
\begin{align}\label{ETC1.4G}
 \hat{V}_{\alpha}^{*}(x) \,=\, \Exp_x^{\hat{v}^*}\left[\int_0^{\infty} e^{-\alpha s}c(\hat{X}_s, \hat{v}^*(\hat{X}_s)) \D{s}\right] \,,
\end{align} where $\hat{X}$ is the solution of SDE \cref{E1.1} corresponding to $\hat{v}^*$\,.
 
Now, we have
\begin{equation}
|\cJ_{\alpha}^{v_{n_k}^*}(x, c) - \cJ_{\alpha}^{v^*}(x, c)| \leq |\cJ_{\alpha}^{v_{n_k}^*}(x, c) - V_{\alpha}^{n_k}(x)| + |V_{\alpha}^{n_k}(x) - \cJ_{\alpha}^{v^*}(x, c)|\,.
\end{equation} From Theorem \ref{TD1.1}, we know that $\cJ_{\alpha}^{v^*}(x, c) = V_{\alpha}(x)$. Thus from Theorem~\ref{TC1.3}, we deduce that $|V_{\alpha}^{n_k}(x) - \cJ_{\alpha}^{v^*}(x, c)|\to 0$ as $k\to\infty$\,. To complete the proof we have to show that $|\cJ_{\alpha}^{v_{n_k}^*}(x, c) - V_{\alpha}^{n_k}(x)|\to 0$ as $k\to \infty$\,. 
Also, from Theorem~\ref{TD1.2} we know that $v_n^*\in \Usm$ is a minimizing selector of the HJB equation \cref{APOptDHJB1} of the approximated model, thus it follows that
\begin{align}\label{ETC1.4GA1}
\alpha V_{\alpha}^{n_k}(x) &\,=\, \min_{\zeta \in\Act}\left[\sL_{\zeta}^{n_k}V_{\alpha}^{n_k}(x) + c_{n_k}(x, \zeta)\right]\nonumber\\
& \,=\, \trace\bigl(a_{n_k}(x)\grad^2 V_{\alpha}^{n_k}(x)\bigr) + b_{n_k}(x,v_{n_k}^*(x))\cdot \grad V_{\alpha}^{n_k}(x) + c(x, v_{n_k}^*(x))\,,\quad \text{a.e.\ }\,\, x\in\Rd\,.
\end{align}
Hence, by standard Sobolev estimate (as in Theorem~\ref{TC1.3}), for each $R>0$ we have $\norm{V_{\alpha}^{n_k}}_{\Sob^{2,p}(\sB_R)} \leq \kappa_3$\,, for some positive constant $\kappa_3$ independent of $k$. Thus, we can extract a further sub-sequence (without loss of generality denoting by same sequence) such that for some $\Tilde{V}_{\alpha}^*\in \Sobl^{2,p}(\Rd)\cap \cC_{b}(\Rd)$ (as in \cref{ETC1.3BC}) we get
\begin{equation}\label{ETC1.4H}
\begin{cases}
V_{\alpha}^{n_k}\to & \Tilde{V}_{\alpha}^*\quad \text{in}\quad \Sobl^{2,p}(\Rd)\quad\text{(weakly)}\\
V_{\alpha}^{n_k}\to & \Tilde{V}_{\alpha}^*\quad \text{in}\quad \cC^{1, \beta}_{loc}(\Rd)\quad\text{(strongly)}\,.
\end{cases}       
\end{equation} Following the similar steps as in Theorem \ref{TC1.3}, multiplying by test function and letting $k\to \infty$, from \cref{ETC1.4GA1} we deduce that $\Tilde{V}_{\alpha}^*\in \Sobl^{2,p}(\Rd)\cap \cC_{b}(\Rd)$ satisfies
\begin{align}\label{ETC1.4I}
\alpha \Tilde{V}_{\alpha}^{*}(x) &\,=\, \min_{\zeta \in\Act}\left[\sL_{\zeta}\Tilde{V}_{\alpha}^{*}(x) + c(x, \zeta)\right]\nonumber\\
& \,=\, \trace\bigl(a(x)\grad^2 \Tilde{V}_{\alpha}^{*}(x)\bigr) + b(x,\hat{v}^*(x))\cdot \grad \Tilde{V}_{\alpha}^{*}(x) + c(x, \hat{v}^*(x))(x)\,.
\end{align}
From the continuity results (Theorem~\ref{TC1.3}), it is easy to see that $\Tilde{V}_{\alpha}^{*}(x) = \cJ_{\alpha}^{v^*}(x, c)$ for all $x\in\Rd$. Moreover, applying It$\hat{\rm o}$-Krylov formula and using \cref{{ETC1.4I}} we obtain
\begin{align}\label{ETC1.4J}
 \Tilde{V}_{\alpha}^{*}(x) \,=\, \Exp_x^{\hat{v}^*}\left[\int_0^{\infty} e^{-\alpha s}c(\hat{X}_s, \hat{v}^*(\hat{X}_s)) \D{s}\right] \,.
\end{align}
Since both $\hat{V}_{\alpha}^{*}$, $\Tilde{V}_{\alpha}^{*}$ are continuous, from \cref{ETC1.4G} and \cref{ETC1.4J}, it follows that both $\cJ_{\alpha}^{v_{n_k}^*}(x, c)$ (which is equals to $V_{\alpha}^{n_{k},*}(x)$) and $V_{\alpha}^{n_{k}}(x)$ converge to the same limit. This completes the proof.
\end{proof}
\begin{remark}
Note that in the above, we indirectly also showed the continuity of the value function in the control policy (under the topology defined); uniqueness of the solution to the PDE in the above implies continuity. This result, while can be obtained from the analysis of Borkar \cite{Bor89} (in a slightly more restrictive setup), is obtained directly via a careful optimality analysis and will have important consequences in numerical solutions and approximation results for both discounted and average cost optimality. This is studied in details, with implications in \cite{YukselPradhan}\,.
\end{remark}
\section{Analysis of Ergodic Cost}\label{secErgodicCost}
In this section we study the robustness problem for the ergodic cost criterion. The associated optimal control problem for this cost criterion has been studied extensively in the literature, see e.g., \cite{ABG-book}. 

For this cost evolution criterion we will study the robustness problem under two sets of assumptions: the first is so called near-monotonicity condition on the running cost which discourage instability and second is Lyapunov stability.
\subsection{Analysis under a near-monotonicity assumption}\label{NearMonotone}    
Here we assume that the cost function $c$ satisfies the following near-monotonicity condition:
\begin{itemize}
\item[\hypertarget{A6}{{(A6)}}] It holds that
\begin{equation}\label{ENearmonot}
\liminf_{\norm{x}\to\infty}\inf_{\zeta\in \Act} c(x,\zeta) > \sE^*(c)\,. 
\end{equation}
\end{itemize}
This condition penalizes the escape of probability mass to infinity. Since our running cost $c$ is bounded it is easy to see that $\sE^*(c) \leq \norm{c}_{\infty}$\,. Recall that a stationary policy $v\in \Usm$ is said to be stable if the associated diffusion process is positive recurrent. It is known that under \cref{ENearmonot}, optimal control exists in the space of stable stationary Markov controls (see, \cite[Theorem~3.4.5]{ABG-book}).  

Now from \cite[Theorem~3.6.10]{ABG-book}, we have the following complete characterization of ergodic optimal control.
\begin{theorem}\label{ergodicnearmono1}
Suppose that Assumptions (A1)-(A4) and (A6) hold. Then there exists a unique solution pair $(V, \rho)\in \Sobl^{2,p}(\Rd)\times \RR$, \, $1< p < \infty$, with $V(0) = 0$ and $\inf_{\Rd} V > -\infty$ and $\rho \leq \sE^*(c)$, satisfying
\begin{equation}\label{EErgonearOpt1A}
\rho = \min_{\zeta \in\Act}\left[\sL_{\zeta}V(x) + c(x, \zeta)\right]\,.
\end{equation}
Moreover, we have
\begin{itemize}
\item[(i)]$\rho = \sE^*(c)$
\item[(ii)] a stationary Markov control $v^*\in \Usm$ is an optimal control if and only if it is a minimizing selector of \cref{EErgonearOpt1A}, i.e., if and only if it satisfies
\begin{equation}\label{EErgonearOpt1B}
\min_{\zeta \in\Act}\left[\sL_{\zeta}V(x) + c(x, \zeta)\right] \,=\, \trace\bigl(a(x)\grad^2 V(x)\bigr) + b(x,v^*(x))\cdot \grad V(x) + c(x, v^*(x))\,,\quad \text{a.e.}\,\, x\in\Rd\,.
\end{equation}
\end{itemize}
\end{theorem}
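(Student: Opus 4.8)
The plan is to construct the pair $(V,\rho)$ by the vanishing-discount method, to identify $\rho$ with $\sE^*(c)$ by exhibiting a stable minimizing selector (stability being forced here by near-monotonicity rather than assumed), and finally to prove uniqueness via a stochastic representation on the complements of balls combined with a Liouville-type argument for recurrent diffusions.

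\textbf{Step 1 (vanishing discount).} I would start from the $\alpha$-discounted value functions $V_\alpha$ of \cref{OptDHJB}, whose $\cC^2(\Rd)\cap\cC_b(\Rd)$ regularity and optimality are given by \cref{TD1.1}, and set $\bar V_\alpha\df V_\alpha-V_\alpha(0)$. The bound $0\le\alpha V_\alpha\le\norm{c}_\infty$ is immediate; the role of \cref{ENearmonot} is to prevent $\{\alpha V_\alpha(0)\}$ from approaching the ``escape'' value $\liminf_{\abs x\to\infty}\inf_\zeta c(x,\zeta)$, and, via the Harnack inequality for the uniformly elliptic family $\sL_\zeta$ together with the interior $\Sob^{2,p}$ estimate already used in \cref{ETC1.3A}, to give $\norm{\bar V_\alpha}_{\Sob^{2,p}(\sB_R)}\le C_R$ with $C_R$ independent of $\alpha\in(0,1)$. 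Along a sequence $\alpha_k\downarrow0$, Banach--Alaoglu, the compact embedding $\Sob^{2,p}(\sB_R)\hookrightarrow\cC^{1,\beta}(\bar\sB_R)$ and a diagonal argument produce $\alpha_kV_{\alpha_k}(0)\to\rho$ with $\rho\le\sE^*(c)$, and $\bar V_{\alpha_k}\to V$ weakly in $\Sobl^{2,p}(\Rd)$ and strongly in $\cC^{1,\beta}_{\mathrm{loc}}(\Rd)$, with $V(0)=0$. Writing the discounted HJB as $\min_\zeta[\sL_\zeta V_{\alpha_k}+c(\cdot,\zeta)]=\alpha_k\bar V_{\alpha_k}+\alpha_kV_{\alpha_k}(0)$ and passing to the limit exactly as in the derivation of \cref{ETC1.3E} (the gradients converge locally uniformly) yields \cref{EErgonearOpt1A}. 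That $\inf_{\Rd}V>-\infty$ follows from near-monotonicity: since $\rho\le\sE^*(c)<\liminf_{\abs x\to\infty}\inf_\zeta c$, there is $R>0$ and $\varepsilon_0>0$ with $c(x,\zeta)>\rho+\varepsilon_0$ for $\abs x>R$ and all $\zeta$, so along any minimizing selector $V$ is a supersolution on $\sB_R^c$ and hence bounded below there by its values on $\bar\sB_R$.

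\textbf{Step 2 ($\rho=\sE^*(c)$ and the characterization).} Fix a measurable minimizing selector $v^*$ of \cref{EErgonearOpt1A}. On $\sB_R^c$ one has $\sL_{v^*}V=\rho-c(\cdot,v^*)\le-\varepsilon_0<0$, so the It\^o--Krylov formula up to $\uuptau_R$ gives $\Exp_x^{v^*}[\uuptau_R]\le(V(x)-\inf_{\Rd}V)/\varepsilon_0<\infty$ for $x\in\sB_R^c$; together with the nondegeneracy in (A3) this makes $v^*$ positive recurrent, with invariant probability measure $\mu_{v^*}$, and the same bound yields $\mu_{v^*}$-integrability of $V$. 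The martingale identity $\Exp_x^{v^*}[V(X_T)]+\Exp_x^{v^*}\bigl[\int_0^T(c(X_s,v^*(X_s))-\rho)\,\D s\bigr]=V(x)$, positive recurrence, and $\mu_{v^*}$-integrability of $V$ then give $\tfrac1T\Exp_x^{v^*}\bigl[\int_0^Tc(X_s,v^*(X_s))\,\D s\bigr]\to\rho$, i.e.\ $\sE_x(c,v^*)=\rho$ for all $x$; hence $\sE^*(c)\le\rho$, which with $\rho\le\sE^*(c)$ proves (i) and the ``if'' direction of (ii). For the ``only if'' direction, if $\hat v\in\Usm$ is optimal, then it is stable (a transient control would push the occupation measures to infinity, forcing $\sE_x(c,\hat v)\ge\liminf_{\abs x\to\infty}\inf_\zeta c>\sE^*(c)$, cf.\ \cite[Lemma~3.4.6]{ABG-book}); integrating \cref{EErgonearOpt1A} against its invariant measure $\mu_{\hat v}$ and using $\int\sL_{\hat v}V\,\D\mu_{\hat v}=0$ together with $\sE_x(c,\hat v)=\int c(\cdot,\hat v)\,\D\mu_{\hat v}=\rho$ shows the nonnegative integrand $\sL_{\hat v}V+c(\cdot,\hat v)-\rho$ vanishes $\mu_{\hat v}$-a.e., and since $\mu_{\hat v}$ has a positive density this is \cref{EErgonearOpt1B} a.e.

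\textbf{Step 3 (uniqueness), and the main obstacle.} Let $(V_1,\rho_1),(V_2,\rho_2)$ satisfy the stated side conditions; by (i), $\rho_1=\rho_2=\sE^*(c)=:\rho$. For $i=1,2$, as in Step 2 any minimizing selector $v_i^*$ of $V_i$ has $\Exp_x^{v_i^*}[\uuptau_R]<\infty$ on $\sB_R^c$, whence the representation
\[
V_i(x)=\Exp_x^{v_i^*}\Bigl[\int_0^{\uuptau_R}\bigl(c(X_s,v_i^*(X_s))-\rho\bigr)\,\D s+V_i(X_{\uuptau_R})\Bigr],\qquad x\in\sB_R^c,
\]
together with the one-sided bound $V_i(x)\le\Exp_x^{U}\bigl[\int_0^{\uuptau_R}(c(X_s,U_s)-\rho)\,\D s+V_i(X_{\uuptau_R})\bigr]$ for every $U\in\Uadm$. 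Comparing the representation for $V_1$ with the bound for $V_2$ and conversely gives $\abs{V_1-V_2}\le\sup_{\partial\sB_R}\abs{V_1-V_2}$ on $\sB_R^c$, so $V_1-V_2$ is bounded on $\Rd$. Finally, $\sL_{v_1^*}V_1+c(\cdot,v_1^*)=\rho$ while $\sL_{v_1^*}V_2+c(\cdot,v_1^*)\ge\min_\zeta[\sL_\zeta V_2+c(\cdot,\zeta)]=\rho$, so $\sL_{v_1^*}(V_1-V_2)\le0$ on $\Rd$; thus $V_1-V_2$ is a bounded $\sL_{v_1^*}$-superharmonic function for the recurrent $v_1^*$-diffusion, hence constant, and $V_1(0)=V_2(0)=0$ forces $V_1\equiv V_2$. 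I expect Step 3 to be the main obstacle: making the ball-complement representation and the resulting boundedness of $V_1-V_2$ fully rigorous (equivalently, showing that any solution with $\inf V>-\infty$ is the value of the associated first-passage problem relative to a fixed ball, cf.\ \cite[Lemma~3.6.9 and Theorem~3.6.10]{ABG-book}) is precisely where the near-monotonicity hypothesis \cref{ENearmonot} does the essential work of confining the diffusion. The remaining ingredients — the vanishing-discount limit, the elliptic and Harnack estimates, the measurable selection, and the It\^o--Krylov applications — are routine given \cref{TD1.1} and the techniques already used in the proof of \cref{TC1.3}.
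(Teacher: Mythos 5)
First, a point of reference: the paper does not prove this theorem at all --- it is quoted directly from \cite[Theorem~3.6.10]{ABG-book} --- so your proposal is being compared with the standard vanishing-discount argument of that book, pieces of which the paper reuses in Theorems~\ref{ErgoContnuity}, \ref{NearmonotPoisso} and \ref{ErgodNearmonoRobu1}. Your outline follows exactly that route and correctly isolates where near-monotonicity enters, but three steps are not sound as written. (a) Your justification of $\inf_{\Rd}V>-\infty$ is circular: a function with $\sL_{v^*}V\le-\varepsilon_0$ on the \emph{unbounded} set $\sB_R^c$ is not automatically bounded below by its values on $\bar\sB_R$; the minimum principle controls only bounded subdomains, and without prior growth or recurrence information the infimum can be approached at infinity. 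The correct argument is the one the paper itself runs in \cref{EErgoContnuity1I}--\cref{EErgoContnuity1J}: by \cite[Lemma~3.6.1]{ABG-book} the infimum of $V_\alpha$ over $\Rd$ is attained on the fixed compact set $\kappa(\rho)\subset\sB_{R_0}$, so $V(x)=\lim_k(V_{\alpha_k}(x)-V_{\alpha_k}(0))\ge-\limsup_k\osc_{\sB_{R_0}}V_{\alpha_k}\ge-C$. (b) In Step 2 the ``$\mu_{v^*}$-integrability of $V$'' does not follow from $\Exp_x^{v^*}[\uuptau_R]\le(V(x)-\inf V)/\varepsilon_0$ (that inequality points the wrong way), and in the ``only if'' direction the identity $\int\sL_{\hat v}V\,\D\mu_{\hat v}=0$ is not free for a $\Sobl^{2,p}$ function that is merely bounded below. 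Both can be avoided: the one-sided It\^o--Krylov estimate $\inf_{\Rd}V-V(x)\le\Exp_x^{v^*}\bigl[\int_0^T(\rho-c)\,\D s\bigr]$ uses only $\inf V>-\infty$ and already yields $\sE_x(c,v^*)\le\rho$, hence $\rho=\sE^*(c)$ and the ``if'' part of (ii).

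(c) The obstacle you flag in Step 3 is real and your sketch does not close it: the \emph{equality} $V_i(x)=\Exp_x^{v_i^*}\bigl[\int_0^{\uuptau_R}(c-\rho)\,\D s+V_i(X_{\uuptau_R})\bigr]$ requires sending the boundary term $\Exp_x^{v_i^*}\bigl[V_i(X_{\uptau_{R'}})\Ind_{\{\uuptau_R>\uptau_{R'}\}}\bigr]$ to zero from \emph{above} as $R'\to\infty$, which does not follow from $\inf V_i>-\infty$ alone; Fatou only gives ``$\ge$''. Consequently the two-sided bound $\abs{V_1-V_2}\le\sup_{\partial\sB_R}\abs{V_1-V_2}$, on which your Liouville argument rests, is not established. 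The standard repair --- and the one the paper itself uses in the analogous Theorem~\ref{NearmonotPoisso} --- is asymmetric: the solution \emph{constructed} by vanishing discount inherits the exact representation $V(x)=\lim_{r\downarrow0}\Exp_x^{v^*}[\int_0^{\uuptau_r}(c-\rho)\,\D s]$ from the discounted approximations (\cite[Lemma~3.6.9]{ABG-book}), any competitor $\hat V$ satisfies only the ``$\ge$'' inequality, which gives the one-sided comparison $V-\hat V\le0$; since $\sL_{v^*}(V-\hat V)=0$ and the maximum value $0$ is attained at $x=0$, the strong maximum principle forces $V\equiv\hat V$. No boundedness of the difference and no recurrence-based Liouville theorem are needed. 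With (a)--(c) repaired along these lines your argument becomes the standard proof.
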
 
We assume that for the approximated model, for each $n\in \NN$ the running cost function $c_n$ satisfies the near-monotonicity condition \cref{ENearmonot} relative to $\max_{n\in\NN}\sE^{n*}(c_n)$\, i.e., 
\begin{equation}\label{AssumNearApprox1}
\max_{n\in\NN}\sE^{n*}(c_n) < \liminf_{\norm{x}\to\infty}\inf_{\zeta\in\Act} c_n(x,\zeta)\,.
\end{equation}
Thus, in view of \cite[Theorem~3.6.10]{ABG-book}, for the approximating model, for each $n\in \NN$ we have the following theorem.
\begin{theorem}\label{ergodicnearmono2}
Suppose that Assumption (A5)(iii) holds. Then there exists a unique solution pair $(V_n, \rho_n)\in \Sobl^{2,p}(\Rd)\times \RR$, \, $1< p < \infty$, with $V_n(0) = 0$ and $\inf_{\Rd} V_n > -\infty$ and $\rho_n \leq \sE^{n*}(c_n)$,  satisfying
\begin{equation}\label{EErgonearOpt2A}
\rho_n = \min_{\zeta \in\Act}\left[\sL_{\zeta}^{n}V_n(x) + c_n(x, \zeta)\right]
\end{equation}
Moreover, we have
\begin{itemize}
\item[(i)]$\rho_n = \sE^{n*}(c_n)$
\item[(ii)] a stationary Markov control $v_n^*\in \Usm$ is an optimal control if and only if it is a minimizing selector of \cref{EErgonearOpt2A}, i.e., if and only if it satisfies
\begin{equation}\label{EErgonearOpt2B}
\min_{\zeta \in\Act}\left[\sL_{\zeta}^n V_n(x) + c_n(x, \zeta)\right] \,=\, \trace\bigl(a_n(x)\grad^2 V_n(x)\bigr) + b_n(x,v_n^*(x))\cdot \grad V_n(x) + c_n(x, v_n^*(x))\,,\quad \text{a.e.}\,\, x\in\Rd\,.
\end{equation}
\end{itemize}
\end{theorem}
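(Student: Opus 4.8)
The plan is to verify that, for each fixed $n\in\NN$, the $n$-th approximating model $(X^n,c_n)$ satisfies all the hypotheses of Theorem~\ref{ergodicnearmono1} (equivalently, of \cite[Theorem~3.6.10]{ABG-book}), so that the conclusion follows by a direct application of that result with $(b,\upsigma,c)$ replaced by $(b_n,\upsigma_n,c_n)$. There is no genuine obstacle here; the statement is a per-$n$ transcription of the cited characterization, and the only point requiring care is checking that the near-monotonicity hypothesis holds for each $c_n$, which is precisely what the standing assumption \cref{AssumNearApprox1} is designed to supply.

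First I would record that Assumption (A5)(iii) guarantees that $b_n$ and $\upsigma_n$ satisfy (A1)--(A3) and that $c_n$ satisfies (A4): it is bounded by $M$, jointly continuous in $(x,\zeta)$, and locally Lipschitz in its first argument uniformly with respect to $\zeta\in\Act$. Consequently the SDE \cref{ASE1.1} admits, for each admissible control, a unique strong solution, and under any stationary Markov control the solution is a strong Feller, hence strong Markov, process; this is exactly the structural input required by \cite[Theorem~3.6.10]{ABG-book}.

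Next I would check the near-monotonicity condition (A6) for $c_n$. Since $0\le\sE^{n*}(c_n)\le\norm{c_n}_\infty\le M<\infty$ and $\sE^{n*}(c_n)\le\max_{m\in\NN}\sE^{m*}(c_m)$, the standing assumption \cref{AssumNearApprox1} yields
\begin{equation*}
\liminf_{\norm{x}\to\infty}\inf_{\zeta\in\Act}c_n(x,\zeta) \,>\, \max_{m\in\NN}\sE^{m*}(c_m) \,\ge\, \sE^{n*}(c_n)\,,
\end{equation*}
which is precisely \cref{ENearmonot} written for the $n$-th model. Hence all the hypotheses of Theorem~\ref{ergodicnearmono1} are in force for model $n$.

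Applying Theorem~\ref{ergodicnearmono1} to $(b_n,\upsigma_n,c_n)$ then delivers the existence of a unique pair $(V_n,\rho_n)\in\Sobl^{2,p}(\Rd)\times\RR$, $1<p<\infty$, with $V_n(0)=0$, $\inf_{\Rd}V_n>-\infty$, and $\rho_n\le\sE^{n*}(c_n)$, solving \cref{EErgonearOpt2A}; it gives item~(i), $\rho_n=\sE^{n*}(c_n)$; and item~(ii), the characterization of $\alpha$--ergodic-optimal (here ergodic-optimal) stationary Markov controls for the $n$-th model as the measurable minimizing selectors of \cref{EErgonearOpt2A}, i.e.\ \cref{EErgonearOpt2B}. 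I would emphasize that no uniformity in $n$ is asserted or needed at this stage: uniform-in-$n$ bounds on $(V_n,\rho_n)$ and the attendant compactness arguments will be developed separately when proving the continuity and robustness statements, so the present theorem is merely the per-$n$ instance of \cite[Theorem~3.6.10]{ABG-book}.
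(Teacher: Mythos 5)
Your proposal is correct and follows exactly the route the paper takes: the paper introduces the standing near-monotonicity assumption \cref{AssumNearApprox1} immediately before the theorem and then obtains the statement as a direct per-$n$ application of \cite[Theorem~3.6.10]{ABG-book}, with (A5)(iii) supplying the hypotheses (A1)--(A4) for the $n$-th model. Your additional remarks (that \cref{AssumNearApprox1} dominates $\sE^{n*}(c_n)$ and that no uniformity in $n$ is needed here) are accurate and consistent with how the paper defers all uniform estimates to the continuity/robustness proofs.
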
  
In view of the near-monotonicity assumption \cref{AssumNearApprox1}, for any minimizing selector $v_n^*\in\Usm$ of \cref{EErgonearOpt2A}, it is easy to see that outside a compact set $\sL_{v_n^*}^{n}V_n(x) \leq -\epsilon$ for some $\epsilon>0$\,. Since $V_n$ is bounded from below, \cite[Theorem~2.6.10(f)]{ABG-book} asserts that $v_n^*$ is stable. Hence, we deduce that the optimal policies of the approximating models are stable. However, note that the compact set mentioned above may not be applicable uniformly for all $n$, which turns out to be a consequential issue.

Now we want to show that as $n\to\infty$ the optimal value of the approximated model $\sE^{n*}(c_n)$ converges to the optimal value of the true model $\sE^{*}(c)$\,. Under near-monotonicity assumption this result may not be true in general due to the restricted uniqueness/non-uniqueness of the solution of the associated HJB equation (see e.g., \cite{AA12}, \cite{AA13}). As a result of this, in \cite{AA12}, \cite{M97} the authors have shown that for the optimal control problem the policy iteration algorithm (PIA) may fail to converge to the optimal value. In order to to ensure convergence of the PIA, in addition to the near-monotonicity assumption a blanket Lyapunov condition is assumed in \cite{M97}\,. 

Accordingly, in this article, to guarantee the convergence $\sE^{n*}(c_n)\to \sE^{*}(c)$, we will assume that 
\[\Theta \,\df\, \{\eta_{v_n^*}^n : n\in\NN\},\] is tight, where $\eta_{v_n^*}^n$ is the unique invariant measure of the solution $X^n$ of \cref{ASE1.1} corresponding to $v_n^*\in \Usm$ (the optimal policies of the approximated models)\,. One sufficient condition which ensures the required tightness is the following: there exists a pair of nonnegative inf-compact functions $(\Lyap, h)\in \cC^{2}(\Rd)\times\cC(\Rd)$ such that $\sL_{v_n^*}^{n} \Lyap(x) \leq \hat{\kappa}_{0} - h(x)$ for some positive constant $\hat{\kappa}_{0}$ and for all $n\in\NN$ and $x\in \Rd$\,.
\begin{theorem}\label{ErgoContnuity}
Suppose that Assumptions (A1) - (A6) hold. Also, assume that the set $\Theta$ is  tight. Then, we have
\begin{equation}\label{EErgoContnuity1}
\lim_{n\to\infty} \sE^{n*}(c_n) = \sE^{*}(c)\,.
\end{equation}
\end{theorem}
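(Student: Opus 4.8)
The plan is to establish the convergence $\sE^{n*}(c_n)\to\sE^*(c)$ via two one-sided bounds, using the HJB characterizations in Theorems~\ref{ergodicnearmono1} and \ref{ergodicnearmono2} together with the tightness of $\Theta$. First I would obtain uniform bounds on the sequence $(V_n,\rho_n)$. Since $0\le c_n\le M$ and $\rho_n=\sE^{n*}(c_n)\le M$, and since by the near-monotonicity assumption \cref{AssumNearApprox1} each $v_n^*$ is stable with $\sL_{v_n^*}^nV_n(x)\le-\epsilon$ outside a \emph{uniform} compact set (here one uses that $\liminf_{\norm x\to\infty}\inf_\zeta c_n(x,\zeta)$ exceeds $\max_n\sE^{n*}(c_n)$), a Harnack-type argument as in \cite[Lemma~3.6.3 and Theorem~3.6.6]{ABG-book} gives that $V_n$ is bounded below uniformly and that, after using $V_n(0)=0$, the local oscillation of $V_n$ on each ball $\sB_R$ is bounded uniformly in $n$. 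Then the interior elliptic estimate of \cite[Theorem~9.11]{GilTru} applied to \cref{EErgonearOpt2A} (written in nondivergence form with the minimizing selector $v_n^*$ frozen in) yields $\norm{V_n}_{\Sob^{2,p}(\sB_R)}\le\kappa_R$ for each $R$, with $\kappa_R$ independent of $n$, exactly as in the proof of Theorem~\ref{TC1.3}.

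Next, by the Banach--Alaoglu theorem, the compact embedding $\Sob^{2,p}(\sB_R)\hookrightarrow\cC^{1,\beta}(\bar{\sB}_R)$, compactness of $\Usm$, and a diagonalization argument, I would pass to a subsequence along which $V_{n_k}\to\bar V$ weakly in $\Sobl^{2,p}(\Rd)$ and strongly in $\cC^{1,\beta}_{loc}(\Rd)$, $\rho_{n_k}\to\bar\rho$ in $\RR$, and $v_{n_k}^*\to\bar v$ in $\Usm$. Using Assumptions~(A5)(i)--(ii) (continuous convergence of $b_n,c_n$ and a.e.\ convergence of $\upsigma_n$), the argument from \cref{ETC1.3C1A}--\cref{ETC1.3E} (multiply \cref{EErgonearOpt2A} by $\phi\in\cC_c^\infty(\Rd)$ and pass to the limit) shows that $(\bar V,\bar\rho)$ solves the \emph{true} ergodic HJB equation $\bar\rho=\min_{\zeta\in\Act}[\sL_\zeta\bar V(x)+c(x,\zeta)]$, with $\bar V(0)=0$ and $\inf_{\Rd}\bar V>-\infty$. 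Here is where the tightness of $\Theta$ enters crucially: the invariant measures $\eta_{v_{n_k}^*}^{n_k}$ have a weakly convergent subsequence with limit $\bar\eta$, and (by the a.e.\ convergence of coefficients and standard arguments on convergence of invariant measures, e.g.\ \cite[Lemma~3.2.4 and Section~3.4]{ABG-book}) $\bar\eta$ is an invariant probability measure for the limiting process under $\bar v$; in particular $\bar v$ is stable, so $\bar V$ is bounded below along a recurrent process and, integrating \cref{EErgonearOpt2B} against $\eta_{v_{n_k}^*}^{n_k}$, one gets $\rho_{n_k}=\int c_{n_k}(x,v_{n_k}^*(x))\,\eta_{v_{n_k}^*}^{n_k}(\D x)\to\int c(x,\bar v(x))\,\bar\eta(\D x)$, which identifies $\bar\rho$ as the ergodic cost of $\bar v$ in the true model, hence $\bar\rho\ge\sE^*(c)$.

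For the reverse inequality I would invoke the uniqueness part of Theorem~\ref{ergodicnearmono1}: the pair $(\bar V,\bar\rho)$ satisfies all the normalizations there ($\bar V(0)=0$, $\inf\bar V>-\infty$), and since it arises from a stable selector one has $\bar\rho\le\sE^*(c)$ as well, so in fact $\bar\rho=\sE^*(c)$ and $\bar V=V$. Because every subsequence of $(\rho_n)$ has a further subsequence converging to $\sE^*(c)$, the whole sequence converges: $\sE^{n*}(c_n)=\rho_n\to\sE^*(c)$, which is \cref{EErgoContnuity1}. The main obstacle I anticipate is the step establishing the \emph{uniform} lower bound and uniform local oscillation bound on $V_n$: the near-monotonicity constant $\epsilon$ and the compact set outside which $\sL_{v_n^*}^nV_n\le-\epsilon$ are, a priori, $n$-dependent (as the paper itself flags in the paragraph before Theorem~\ref{ErgoContnuity}), and it is precisely the assumed tightness of $\Theta$—equivalently the common Lyapunov pair $(\Lyap,h)$—that must be leveraged to make these estimates uniform; without it the Sobolev bounds $\kappa_R$ could blow up and the whole compactness scheme would collapse. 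A secondary technical point is justifying the convergence of invariant measures and the identification of $\bar\eta$ as invariant for the limit dynamics when only $\upsigma_n\to\upsigma$ a.e., which requires care with the nondegeneracy Assumption~(A3) and the strong Feller property.
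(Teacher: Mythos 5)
Your proposal follows essentially the same route as the paper's proof: uniform $\Sob^{2,p}(\sB_R)$ bounds on $V_n$ via the vanishing-discount estimates of \cite[Lemmas~3.6.1 and 3.6.3]{ABG-book}, extraction of a limit $(\bar V,\bar\rho,\bar v)$ by Banach--Alaoglu, compact embedding and compactness of $\Usm$, passage to the limit in the HJB equation against test functions, and identification of $\bar\rho$ with the ergodic cost of $\bar v$ using the tightness of $\Theta$. Two points, however, deserve correction.

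First, your closing step for the ``reverse inequality'' is not justified as written. You assert that because $(\bar V,\bar\rho)$ satisfies the normalizations $\bar V(0)=0$ and $\inf_{\Rd}\bar V>-\infty$, the uniqueness in Theorem~\ref{ergodicnearmono1} gives $\bar\rho\le\sE^*(c)$. But that uniqueness statement only applies to pairs already known to satisfy $\rho\le\sE^*(c)$; under near-monotonicity the ergodic HJB equation can admit uncountably many normalized solutions (this is exactly the difficulty the paper flags via \cite[Example~3.8.3]{ABG-book}), so the normalizations alone do not pin down the solution. The correct way to close the argument --- and the one the paper uses --- is to observe that, since you have shown $\bar\rho=\int_{\Rd}\int_{\Act}c(x,\zeta)\bar v(x)(\D\zeta)\,\eta_{\bar v}(\D x)$ for a minimizing selector $\bar v$ of the limiting equation, the pair $(\bar V,\bar\rho)$ is a \emph{compatible} solution in the sense of \cite[Definition~1.1]{AA13}, and compatible solutions are unique by \cite[Theorem~1.1]{AA13}; hence $(\bar V,\bar\rho)\equiv(V,\rho)$. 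You have all the ingredients for this, but the citation you invoke does not do the work you ask of it.

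Second, a point of attribution: the uniform lower bound and uniform oscillation/Sobolev bounds on $V_n$ do \emph{not} come from the tightness of $\Theta$. They come from the uniform near-monotonicity hypothesis \cref{AssumNearApprox1}, which guarantees that the sets $\kappa(\rho_n)=\{x:\min_\zeta c_n(x,\zeta)\le\rho_n\}$ are contained in a single ball $\sB_{R_0}$ independent of $n$, so that the constants in \cite[Lemma~3.6.3]{ABG-book} are uniform. The tightness of $\Theta$ enters only at the end, to upgrade the convergence $v_{n_k}^*\to\bar v$ to total-variation convergence of the invariant measures $\eta^{n_k}_{v_{n_k}^*}\to\eta_{\bar v}$ (via \cite[Lemma~3.2.6]{ABG-book}) and thereby identify $\bar\rho$ as the ergodic cost of $\bar v$ in the true model. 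Your final paragraph conflates these two roles; the compactness scheme does not collapse without tightness --- what collapses is the identification of the limit.
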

\begin{proof}
From Theorem~\ref{ergodicnearmono2}, we know that for each $n\in\NN$ there exists $(V_n, \rho_n)\in \Sobl^{2,p}(\Rd)\times \RR$, \, $1< p < \infty$, with $V_n(0) = 0$ and $\inf_{\Rd} V_n > -\infty$, satisfying
\begin{equation}\label{EErgoContnuity1A}
\rho_n = \min_{\zeta \in\Act}\left[\sL_{\zeta}^{n}V_n(x) + c_n(x, \zeta)\right]\,,
\end{equation} where $\rho_n = \sE^{n*}(c_n)$\,. Since $\norm{c_n}_{\infty} \leq M$, it follows that $\rho_n = \sE^{n*}(c_n) \leq M$\,.  

From \cite[Theorem~3.6.6]{ABG-book} (the standard vanishing discount asymptotics), we know that as $\alpha \to 0$ the difference $V_{\alpha}^n(\cdot) - V_{\alpha}^n(0) \to V_{n}(\cdot)$ and $\alpha V_{\alpha}^n(0)\to \rho_n$, where $V_{\alpha}^n$ is the solution of the $\alpha$-discounted HJB equation 
\cref{APOptDHJB1}\,. Let $$\kappa(\rho_n) \,\df\, \{x\in\Rd\mid \min_{\zeta\in\Act} c_n(x,\zeta) \leq \rho_n\}\,.$$ Since the map $x\to \min_{\zeta\in\Act} c_n(x,\zeta)$ is continuous, it is easy to see that $\kappa(\rho_n)$ is closed and due the near-monotonicity assumption (see, \cref{AssumNearApprox1}), it follows that $\kappa(\rho_n)$ is bounded. Therefore $\kappa(\rho_n)$ is a compact subset of $\Rd$. Since $V_{\alpha}^{n} \leq \cJ_{\alpha,n}^{v_n^*}(x, c_n)$ and $v_n^*$ is stable, from \cite[Lemma~3.6.1]{ABG-book}, we have 
\begin{equation}\label{EErgoContnuity1C}
\inf_{\kappa(\rho_n)} V_{\alpha}^{n} = \inf_{\Rd} V_{\alpha}^{n} \leq \frac{\rho_n}{\alpha}\,.
\end{equation} Now for any minimizing selector $\hat{v}_n^*\in \Usm$ of \cref{APOptDHJB1}, we get
\begin{equation*}
\trace\bigl(a_n(x)\grad^2 V_{\alpha}^n(x)\bigr) + b_n(x,\hat{v}_n^*(x))\cdot \grad V_{\alpha}^n(x) - \alpha V_{\alpha}^n(x) = - c_n(x, \hat{v}_n^*(x))\,.  
\end{equation*} Since $\norm{c_n}_{\infty} \leq M$ for all $n\in\NN$, from estimate (3.6.9b) of \cite[Lemma~3.6.3]{ABG-book}, it follows that
\begin{equation}\label{EErgoContnuity1D}
\norm{V_{\alpha}^n - V_{\alpha}^n(0)}_{\Sob^{2,p}(\sB_R)} \leq \Tilde{C}(R,p)\left(1 + \alpha \inf_{\sB_{R_0}}V_{\alpha}^n \right)\,,
\end{equation} for all $R> R_0$, where $R_0\in\RR$ is positive number such that $\kappa(\rho_n)\subset\sB_{R_0}$ and $\Tilde{C}(R,p)$ is a positive constant which depends only on $d$ and $R_0$\,. Now combining \cref{EErgoContnuity1C} and \cref{EErgoContnuity1D}, we obtain 
\begin{equation}\label{EErgoContnuity1E}
\norm{V_{\alpha}^n - V_{\alpha}^n(0)}_{\Sob^{2,p}(\sB_R)} \leq \Tilde{C}(R,p)\left(1 + M \right)\,.
\end{equation} In view of assumption \cref{AssumNearApprox1}, one can choose $R_0$ independent of $n$. Thus \cref{EErgoContnuity1E} implies that
\begin{equation}\label{EErgoContnuity1F}
\norm{V_n}_{\Sob^{2,p}(\sB_R)} \leq \Tilde{C}(R,p)\left(1 + M \right)\,.
\end{equation}
Hence, by the Banach-Alaoglu theorem and standard diagonalization argument (as in \cref{ETC1.3BC}), we have there exists $\Tilde{V}\in \Sobl^{2,p}(\Rd)$ such that along a sub-sequence
\begin{equation}\label{EErgoContnuity1G}
\begin{cases}
V_{n_k}\to & \Tilde{V}\quad \text{in}\quad \Sobl^{2,p}(\Rd)\quad\text{(weakly)}\\
V_{n_k}\to & \Tilde{V}\quad \text{in}\quad \cC^{1, \beta}_{loc}(\Rd)\quad\text{(strongly)}\,.
\end{cases}       
\end{equation}
Again, since $\rho_n \leq M$, along a further sub-sequence (without loss of generality denoting by same sequence), we have $\rho_{n_k}\to \Tilde{\rho}$ as $k\to \infty$\,. Now, as before, multiplying by test function $\phi\in \cC_{c}^{\infty}(\Rd)$, from \cref{EErgoContnuity1A}, we obtain
\begin{equation*}
\int_{\Rd}\trace\bigl(a_{n_k}(x)\grad^2 V_{n_k}(x)\bigr)\phi(x)\D x + \int_{\Rd} \min_{\zeta\in \Act} \{b_{n_k}(x,\zeta)\cdot \grad V_{n_k}(x) + c_{n_k}(x, \zeta)\}\phi(x)\D x = \int_{\Rd} \rho_{n_k}\phi(x)\D x\,.
\end{equation*}
By similar argument as in Theorem~\ref{TC1.3}, in view of \cref{EErgoContnuity1G}, letting $k\to\infty$ it follows that
\begin{equation}\label{EErgoContnuity1H}
\int_{\Rd}\trace\bigl(a(x)\grad^2 \Tilde{V}(x)\bigr)\phi(x)\D x + \int_{\Rd} \min_{\zeta\in \Act} \{b(x,\zeta)\cdot \grad \Tilde{V}(x) + c(x, \zeta)\}\phi(x)\D x = \int_{\Rd} \Tilde{\rho}\phi(x)\D x\,.
\end{equation} Since $\phi\in \cC_{c}^{\infty}(\Rd)$ is arbitrary and $\Tilde{V}\in \Sobl^{2,p}(\Rd)$, we deduce that $\Tilde{V}\in \Sobl^{2,p}(\Rd)$ satisfies
\begin{equation*}%\label{EErgonearOpt1A}
\Tilde{\rho} = \min_{\zeta \in\Act}\left[\sL_{\zeta}\Tilde{V}(x) + c(x, \zeta)\right]\,.
\end{equation*} Since $\Usm$ is compact along a further subsequence $v_{n_k}^*\to \tilde{v}^*$ (denoting by the same sequence) in $\Usm$. Repeating the above argument, one can show that the pair $(\Tilde{V}, \Tilde{\rho})$ satisfies 
\begin{equation*}%\label{EErgonearOpt1A}
\Tilde{\rho} = \sL_{\tilde{v}^*}\Tilde{V}(x) + c(x, \tilde{v}^*(x))\,.
\end{equation*}
 As we know $V_n(0) = 0$ for all $n\in\NN$ (see, \cref{EErgoContnuity1A}), it is easy to see that $\Tilde{V}(0) = 0$. Next we show that $\Tilde{V}$ is bounded from below. From estimate (3.6.9a) of \cite[Lemma~3.6.3]{ABG-book}, for each $R> R_0$ we have
\begin{equation}\label{EErgoContnuity1I}
\left(\osc_{\sB_{2R}} V_{\alpha}^{n}\,\df\, \right) \sup_{x\in \sB_{2R}} V_{\alpha}^{n}(x) - \inf_{x\in \sB_{2R}}V_{\alpha}^{n}(x) \leq \Tilde{C}_1(R)(1 + \alpha\inf_{\sB_{R_0}} V_{\alpha}^{n})\leq \Tilde{C}_1(R)(1 + M)\,,
\end{equation} for some constant $\Tilde{C}_1(R) >0$ which depends only on $d$ and $R_0$\,. Also, let $\alpha_k$ be a sequence such that $\alpha_k\to 0$ as $k\to \infty$, thus for each $x\in \Rd$ we have
\begin{align}\label{EErgoContnuity1IA}
V_n(x) &= \lim_{k\to \infty}\left(V_{\alpha_k}^n(x) - V_{\alpha_k}^n(0)\right) \geq \liminf_{k\to \infty} \left(V_{\alpha_k}^n(x) - \inf_{\Rd}V_{\alpha_k}^n(x) + \inf_{\Rd}V_{\alpha_k}^n(x) - V_{\alpha_k}^n(0)\right) \nonumber\\
&\geq - \limsup_{k\to \infty} \left(V_{\alpha_k}^n(0) - \inf_{\Rd}V_{\alpha_k}^n(x)\right) + \liminf _{k\to \infty} \left(V_{\alpha_k}^n(x) - \inf_{\Rd}V_{\alpha_k}^n(x)\right)\nonumber\\
&\geq - \limsup_{k\to \infty} \left(\osc_{\sB_{R_0}} V_{\alpha_k}^n \right);\quad \left(\text{since}\,\,\, \inf_{\sB_{R_0}} V_{\alpha_k}^n = \inf_{\Rd} V_{\alpha_k}^n \right)\,,
\end{align} where the last inequality follows form the fact that $V_{\alpha_k}^n(x) - \inf_{\Rd}V_{\alpha_k}^n(x) \geq 0$\,. Hence, in view of estimate \cref{EErgoContnuity1I}, we deduce that
\begin{equation}\label{EErgoContnuity1J}
V_n(x) \geq - \Tilde{C}_1(R_0)(1 + M)\,.
\end{equation} This implies that the limit $\Tilde{V}\geq - \Tilde{C}_1(R_0)(1 + M)$\,. Note that $$\rho_{n_k} = \sE^{{n_k}*}(c_{n_k}) = \int_{\Rd}\int_{\Act} c_{n_k}(x,\zeta)v_{n_k}^*(x)(\D \zeta)\eta_{v_{n_k}^*}^{n_k}(\D x)\,.$$ Since $\Theta$ is tight, from \cite[Lemma~3.2.6]{ABG-book}, we deduce that $\eta_{v_{n_k}^*}^{n_k} \to \eta_{\tilde{v}^*}$ in total variation norm as $k\to\infty$, where $\eta_{\tilde{v}^*}$ is the unique invariant measure of \cref{E1.1} corresponding to $\tilde{v}^*$\,. Thus, by writing
\begin{align}
&\int_{\Rd}\int_{\Act} c_{n_k}(x,\zeta)v_{n_k}^*(x)(\D \zeta)\eta_{v_{n_k}^*}^{n_k}(\D x)\, -\int_{\Rd}\int_{\Act} c(x,\zeta)\tilde{v}^*(x)(\D \zeta)\eta_{\tilde{v}^*}(\D x) \nonumber \\
& =  \bigg(\int_{\Rd}\int_{\Act} c_{n_k}(x,\zeta)v_{n_k}^*(x)(\D \zeta)\eta_{v_{n_k}^*}^{n_k}(\D x) - \int_{\Rd}\int_{\Act} c_{n_k}(x,\zeta)v_{n_k}^*(x)(\D \zeta)\eta_{\tilde{v}^*}(\D x) \bigg) \nonumber \\
&\quad + \bigg(\int_{\Rd}\int_{\Act} c_{n_k}(x,\zeta)v_{n_k}^*(x)(\D \zeta)\eta_{\tilde{v}^*}(\D x) -\int_{\Rd}\int_{\Act} c(x,\zeta)\tilde{v}^*(x)(\D \zeta)\eta_{\tilde{v}^*}(\D x) \bigg)
\end{align}
and noting that the first term converges to zero by total variation convergence of $\eta_{v_{n_k}^*}^{n_k} \to \eta_{\tilde{v}^*}$  and the second term converging by the convergence in the control topology on $\Usm$ as $\eta_{\tilde{v}^*}$ is fixed; in view of the fact that $c_n \to c$ (continuously over control actions)  we conclude that $\Tilde{\rho} = \int_{\Rd}\int_{\Act} c(x,\zeta)\tilde{v}^*(x)(\D \zeta)\eta_{\tilde{v}^*}(\D x)$\,. Therefore, the pair $(\Tilde{V}, \Tilde{\rho})\in \Sobl^{2,p}(\Rd)\times \RR$, \, $1< p < \infty$, which has the properties that $\Tilde{V}(0) = 0$ and $\inf_{\Rd} \Tilde{V} > -\infty$, is a compatible solution (see \cite[Definition~1.1]{AA13}) to \cref{EErgonearOpt1A}. Since solution to the equation \cref{EErgonearOpt1A} is unique (see \cite[Theorem~1.1]{AA13}), it follows that $(\Tilde{V}, \Tilde{\rho}) \equiv (V, \rho)$\,. This completes the proof of the theorem.
\end{proof} 
In the following theorem, we prove existence and uniqueness of solution of a certain Poisson's equation. This will be useful in proving the robustness result.
\begin{theorem}\label{NearmonotPoisso}
Suppose that Assumptions (A1) - (A4) hold. Let $v\in\Usm$ be a stable control such that 
\begin{equation}\label{ENearmonotPoisso1}
\liminf_{\norm{x}\to\infty}\inf_{\zeta\in \Act} c(x,\zeta) > \inf_{x\in\Rd}\sE_x(c, v)\,. 
\end{equation} Then, there exists a unique pair $(V^v, \rho_v)\in \Sobl^{2,p}(\Rd)\times \RR$, \, $1< p < \infty$, with $V^v(0) = 0$ and $\inf_{\Rd} V^v > -\infty$ and $\rho_v \leq \int_{\Rd}\int_{\Act} c(x,\zeta)v(x)(\D \zeta)\eta_{v}(\D x)$, satisfying
\begin{equation}\label{EErgonearPoisso1A}
\rho_v = \left[\sL_{v}V^v(x) + c(x, v(x))\right]
\end{equation}
Moreover, we have
\begin{itemize}
\item[(i)]$\rho_v = \inf_{\Rd}\sE_x(c, v)$\,.
\item[(ii)] for all $x\in \Rd$
\begin{equation}\label{EErgonearPoisso1B}
V^v(x) \,=\, \lim_{r\downarrow 0}\Exp_{x}^v\left[\int_{0}^{\uuptau_{r}} \left( c(X_t, v(X_t)) - \rho_v\right)\D t\right]\,.
\end{equation}
\end{itemize} 
\end{theorem}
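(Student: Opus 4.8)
The plan is to run the vanishing-discount argument used in the proof of Theorem~\ref{ErgoContnuity}, but with the fixed stable control $v$ frozen in place of the minimizing selectors, so that the nonlinear HJB equation is replaced by the linear Poisson equation \cref{EErgonearPoisso1A}. First I would record two preliminary facts. Since $v$ is stable, the diffusion under $v$ is positive Harris recurrent with a unique invariant probability measure $\eta_v$, so for the bounded cost $c$ the ergodic average converges (a.s.\ and in expectation) to $\bar\rho\df\int_{\Rd}\int_{\Act}c(x,\zeta)v(x)(\D\zeta)\,\eta_v(\D x)$ independently of the initial point; hence $\inf_x\sE_x(c,v)=\bar\rho$, and \cref{ENearmonotPoisso1} becomes: there exist $\epsilon>0$ and $R_0>0$ with $c(x,v(x))\ge\inf_{\zeta\in\Act}c(x,\zeta)\ge\bar\rho+\epsilon$ for $\abs{x}\ge R_0$. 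Second, for each $\alpha>0$ the $\alpha$-discounted cost $V_{\alpha}^v(x)\df\cJ_{\alpha}^v(x,c)$ is, by the standard vanishing-Dirichlet construction (as in the Remark following Theorem~\ref{TD1.1} and \cite[Theorem~3.5.6]{ABG-book}), the unique solution in $\Sobl^{2,p}(\Rd)\cap\cC_b(\Rd)$ of $\sL_{v}V_{\alpha}^v+c(x,v(x))=\alpha V_{\alpha}^v$, with $\norm{V_{\alpha}^v}_{\infty}\le\nicefrac{\norm{c}_{\infty}}{\alpha}$.

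Next I would pass to the vanishing-discount limit, arguing exactly as in \cref{EErgoContnuity1C}--\cref{EErgoContnuity1J}: near-monotonicity and \cite[Lemma~3.6.1]{ABG-book} give $\inf_{\Rd}V_{\alpha}^v=\inf_{\sB_{R_0}}V_{\alpha}^v\le\nicefrac{\bar\rho}{\alpha}$, and then estimates (3.6.9a)--(3.6.9b) of \cite[Lemma~3.6.3]{ABG-book} yield, uniformly in $\alpha\in(0,1)$ and with $R_0$ fixed, $\norm{V_{\alpha}^v-V_{\alpha}^v(0)}_{\Sob^{2,p}(\sB_R)}+\osc_{\sB_{2R}}V_{\alpha}^v\le C(R,p)(1+\norm{c}_{\infty})$ for every $R>R_0$. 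By the Banach--Alaoglu theorem, the compact embedding $\Sob^{2,p}(\sB_R)\hookrightarrow\cC^{1,\beta}(\bar{\sB}_R)$ and a diagonalization I extract $\alpha_k\downarrow0$ with $V_{\alpha_k}^v-V_{\alpha_k}^v(0)\to V^v$ weakly in $\Sobl^{2,p}(\Rd)$ and strongly in $\cC^{1,\beta}_{loc}(\Rd)$, while the Abelian/Tauberian theorem together with the ergodic convergence above gives $\alpha_k V_{\alpha_k}^v(0)\to\bar\rho$. Writing the discounted equation as $\sL_{v}(V_{\alpha_k}^v-V_{\alpha_k}^v(0))+c(x,v(x))=\alpha_k(V_{\alpha_k}^v-V_{\alpha_k}^v(0))+\alpha_k V_{\alpha_k}^v(0)$, multiplying by $\phi\in\cC_c^{\infty}(\Rd)$ and letting $k\to\infty$ (weak $\Sob^{2,p}_{loc}$ convergence controls the trace term, strong $\cC^{1,\beta}_{loc}$ convergence the first-order term, and the first term on the right vanishes locally uniformly), I obtain $\sL_{v}V^v+c(x,v(x))=\rho_v$ a.e.\ with $\rho_v\df\bar\rho$. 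The normalizations $V^v(0)=0$ and $\inf_{\Rd}V^v\ge-C_1(R_0)(1+\norm{c}_{\infty})>-\infty$ follow verbatim as in \cref{EErgoContnuity1IA}--\cref{EErgoContnuity1J}, and $\rho_v=\bar\rho\le\int_{\Rd}\int_{\Act}c(x,\zeta)v(x)(\D\zeta)\,\eta_v(\D x)$ holds with equality; this gives existence.

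For uniqueness, (i), and the representation (ii), I would show that \emph{any} pair $(V,\rho)$ as in the statement solving \cref{EErgonearPoisso1A} must satisfy \cref{EErgonearPoisso1B} with $\rho=\bar\rho$, which pins $\rho$ down, identifies it as $\inf_x\sE_x(c,v)$, and forces $V$ to be unique. To fix $\rho$: $\sL_{v}V=\rho-c(x,v(x))$ is bounded and $\eta_v$ is invariant for $\sL_v$, so a standard localization argument (approximating $V$ by compactly supported functions and using $\inf_{\Rd}V>-\infty$ to control the tails, as in the proof of \cite[Theorem~3.6.10]{ABG-book}) gives $\int_{\Rd}\sL_{v}V\,\D\eta_v=0$, whence $\rho=\int_{\Rd}\int_{\Act}c(x,\zeta)v(x)(\D\zeta)\,\eta_v(\D x)=\bar\rho=\inf_x\sE_x(c,v)$, which is (i). For (ii), I apply the It\^o--Krylov formula to $V$ on the annulus $\sB_r^c\cap\sB_R$ for $0<r<\abs x<R$, obtaining $\Exp_x^v[V(X_{\uuptau_r\wedge\uptau_R})]-V(x)=\Exp_x^v[\int_0^{\uuptau_r\wedge\uptau_R}(\rho-c(X_s,v(X_s)))\,\D s]$; since $v$ is stable, $\uuptau_r<\infty$ a.s.\ with $\Exp_x^v[\uuptau_r]<\infty$ and $\uptau_R\uparrow\infty$, and since $\rho-c(y,v(y))\le-\epsilon$ on $\sB_{R_0}^c$ while $V$ is bounded below, I let $R\to\infty$ to get $\Exp_x^v[V(X_{\uuptau_r})]-V(x)=\Exp_x^v[\int_0^{\uuptau_r}(\rho-c(X_s,v(X_s)))\,\D s]$; finally, letting $r\downarrow0$ and using continuity of $V$ with $V(0)=0$ so that $\Exp_x^v[V(X_{\uuptau_r})]\to0$, I arrive at \cref{EErgonearPoisso1B}. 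Since \cref{EErgonearPoisso1B} determines $V$ once $\rho=\bar\rho$ is fixed, uniqueness follows.

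The main obstacle is the passage $R\to\infty$ in the It\^o--Krylov identity: because $V$ is controlled only from below and may grow at infinity, I must rule out a persistent boundary contribution on $\partial\sB_R$. This is where near-monotonicity is essential — on $\sB_{R_0}^c$ one has $\sL_{v}V=\rho-c\le-\epsilon<0$, so $V$ evaluated along the process, plus an $\epsilon$-drift, is a bounded-below supermartingale as long as the process stays outside $\sB_{R_0}$; together with positive recurrence (the process returns to $\bar{\sB}_r$ in a.s.-finite, $\Prob_x^v$-integrable time) this keeps $\Exp_x^v[V(X_{\uuptau_r\wedge\uptau_R})]$ bounded and controls $\Prob_x^v(\uptau_R<\uuptau_r)$, exactly as in the proof of \cite[Lemma~3.6.1]{ABG-book}, letting me drop the $\partial\sB_R$ term and interchange limit and expectation. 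The secondary, routine point is the identity $\int_{\Rd}\sL_{v}V\,\D\eta_v=0$ for $V$ merely in $\Sobl^{2,p}(\Rd)$ and bounded below, handled by the same cutoff argument.
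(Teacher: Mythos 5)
Your existence argument is essentially the paper's: the same vanishing-discount passage via \cite[Lemma~3.6.1]{ABG-book} and the estimates (3.6.9a)--(3.6.9b) of \cite[Lemma~3.6.3]{ABG-book}, Banach--Alaoglu plus the compact embedding, and the lower bound on $V^v$ exactly as in \cref{EErgoContnuity1IA}--\cref{EErgoContnuity1J}. Your identification of $\rho_v$ by the Abelian theorem (rather than the paper's two-sided It\^o--Krylov argument) is a harmless variant, and your localization argument for pinning down $\hat\rho_v$ for an arbitrary solution is fine \emph{provided} you combine the one-sided inequality it gives with the assumed side condition $\hat\rho_v\le\int c\,\D\eta_v$ (localization alone only yields $\int c\,\D\eta_v\le\hat\rho_v$; the reverse is a hypothesis, not a consequence).

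The genuine gap is in your proof of (ii) for an \emph{arbitrary} solution $V$, which is also where your uniqueness claim lives. In the passage $R\to\infty$ you must dispose of the term $\Exp_x^v\bigl[V(X_{\uptau_R})\Ind_{\{\uptau_R<\uuptau_r\}}\bigr]$. Since $V$ is only bounded below (and possibly unbounded above), Fatou and $\Prob_x^v(\uptau_R<\uuptau_r)\to0$ give only $\liminf_{R\to\infty}\Exp_x^v\bigl[V(X_{\uptau_R})\Ind_{\{\uptau_R<\uuptau_r\}}\bigr]\ge0$; the facts you invoke --- boundedness of $\Exp_x^v[V(X_{\uuptau_r\wedge\uptau_R})]$ and the vanishing of $\Prob_x^v(\uptau_R<\uuptau_r)$ --- do not imply that this term tends to $0$, because that would require uniform integrability of $V^{+}(X_{\uptau_R})\Ind_{\{\uptau_R<\uuptau_r\}}$, i.e.\ growth control on $V$ from \emph{above}, which is not among the hypotheses. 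Consequently you obtain only the inequality $V(x)\ge\limsup_{r\downarrow0}\Exp_x^v\bigl[\int_0^{\uuptau_r}(c(X_t,v(X_t))-\rho_v)\,\D t\bigr]$, not the equality \cref{EErgonearPoisso1B}, and your closing sentence (``since \cref{EErgonearPoisso1B} determines $V$ \ldots uniqueness follows'') is therefore circular. The paper closes exactly this gap differently: it first establishes \cref{EErgonearPoisso1B} \emph{with equality} for the particular vanishing-discount solution $V^v$ (by mimicking \cite[Lemma~3.6.9]{ABG-book}), derives only the one-sided bound above for a second solution $\hat V^v$, concludes $V^v-\hat V^v\le0$ together with $\sL_v(V^v-\hat V^v)=0$, and then invokes the strong maximum principle \cite[Theorem~9.6]{GilTru} to force $V^v=\hat V^v$. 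You need this (or an equivalent comparison argument) to complete uniqueness and to transfer (ii) to every solution.
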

\begin{proof}
Since $c$ is bounded, we have $\left(\rho^{v} \,\df\,\right) \int_{\Rd}\int_{\Act} c(x,\zeta)v(x)(\D \zeta)\eta_{v}(\D x)\leq \inf_{\Rd}\sE_x(c, v) \leq \norm{c}_{\infty}$\,. Also, since (see, \cref{ENearmonotPoisso1}) $\liminf_{\norm{x}\to\infty}\inf_{\zeta\in \Act} c(x,\zeta) > \rho^{v}$, from \cite[Lemma~3.6.1]{ABG-book}, it follows that 
\begin{equation}\label{EErgonearPoisso1C}
\inf_{\kappa(\rho^v)}\cJ_{\alpha}^{v}(x, c) = \inf_{\Rd}\cJ_{\alpha}^{v}(x, c) \leq \frac{\rho^{v}}{\alpha}\,,
\end{equation} where $\kappa(\rho^v) \,\df\, \{x\in \Rd\mid \min_{\zeta\in\Act}c(x,\zeta) \leq \rho^v\}$ and $\cJ_{\alpha}^{v}(x, c)$ is the $\alpha$-discounted cost defined as in \cref{EDiscost}. It known that $\cJ_{\alpha}^{v}(x, c)$ is a solution to the Poisson's equation (see, \cite[Lemma~A.3.7]{ABG-book})
\begin{equation}\label{EErgonearPoisso1D}
 \sL_{v}\cJ_{\alpha}^{v}(x, c) - \alpha \cJ_{\alpha}^{v}(x, c) = - c(x, v(x))\,.
\end{equation} Since $\kappa(\rho^{v})$ is compact, for some $R_0>0$, we have $\kappa(\rho^v)\subset \sB_{R_{0}}$\,. Thus from \cite[Lemma~3.6.3]{ABG-book}, we deduce that for each $R> R_0$ there exist constants $\Tilde{C}_{2}(R), \Tilde{C}_{2}(R, p)$ depending only on $d, R_0$ such that
\begin{equation}\label{EErgonearPoisso1E}
\osc_{\sB_{2R}} \cJ_{\alpha}^{v}(x, c) \leq \Tilde{C}_{2}(R)\left(1 + \alpha\inf_{\sB_{R_0}}\cJ_{\alpha}^{v}(x, c) \right)\,,
\end{equation}
\begin{equation}\label{EErgonearPoisso1F}
\norm{\cJ_{\alpha}^{v}(\cdot, c) - \cJ_{\alpha}^{v}(0, c)}_{\Sob^{2,p}(\sB_R)}\leq \Tilde{C}_{2}(R, p) \left(1 + \alpha\inf_{\sB_{R_0}}\cJ_{\alpha}^{v}(x, c) \right)\,.
\end{equation}
Thus, arguing as in \cite[Lemma~3.6.6]{ABG-book}, we deduce that there exists $(V^{v}, \Tilde{\rho}_v)\in \Sobl^{2,p}(\Rd)\times \RR$ such that as $\alpha\to 0$, $\cJ_{\alpha}^{v}(\cdot, c) - \cJ_{\alpha}^{v}(0, c) \to V^{v}(\cdot)$ and $\alpha\cJ_{\alpha}^{v}(0, c) \to \Tilde{\rho}_{v}$ and the pair $(V^{v}, \Tilde{\rho}_v)$ satisfies
\begin{equation}\label{EErgonearPoisso1G}
 \sL_{v}V^{v}(x) + c(x, v(x)) = \Tilde{\rho}_{v}\,.
\end{equation} By \cref{EErgonearPoisso1C}, we get $\Tilde{\rho}_{v} \leq \rho^{v}$. Now, in view of estimates \cref{EErgonearPoisso1C} and \cref{EErgonearPoisso1F}, it is easy to see that
\begin{equation}\label{EErgonearPoisso1H}
\norm{V^{v}}_{\Sob^{2,p}(\sB_R)}\leq \Tilde{C}_{2}(R, p) \left(1 + M \right)\,.
\end{equation} Also, arguing as in Theorem~\ref{ErgoContnuity} (see \cref{EErgoContnuity1IA}), from estimate \cref{EErgonearPoisso1E} it follows that
\begin{equation}\label{EErgonearPoisso1I}
V^{v}\geq -\Tilde{C}_{2}(R_0) \left(1 + M \right)\,. 
\end{equation}
%From \cref{ENearmonotPoisso1} and \cref{EErgonearPoisso1G}, it is easy to see that outside a compact set $\sL_{v}V^{v}(x) \leq -\epsilon$ for some $\epsilon>0$\,. Since $V^v$ is bounded from below \cite[Theorem~2.6.10(f)]{ABG-book} asserts that $v$ is stable.
Now, applying It$\hat{\rm o}$-Krylov formula and using \cref{EErgonearPoisso1G} we obtain
\begin{align*}
 \Exp_x^{v}\left[V^{v}\left(X_{T\wedge \uptau_{R}}\right)\right] - V^v(x)\,=\, \Exp_x^{v}\left[\int_0^{T\wedge \uptau_{R}} \left(\Tilde{\rho}_{v} - c(X_t, v(X_t))\right) \D{t}\right]\,.
\end{align*} This implies
\begin{align*}
 \inf_{y\in\Rd}V^{v}(y) - V^v(x)\,\leq\, \Exp_x^{v}\left[\int_0^{T\wedge \uptau_{R}} \left(\Tilde{\rho}_{v} - c(X_t, v(X_t))\right) \D{t}\right]\,.
\end{align*}Since $v$ is stable, letting $R\to \infty$, we get
\begin{align*}
 \inf_{y\in\Rd}V^{v}(y) - V^v(x)\,\leq\, \Exp_x^{v}\left[\int_0^{T} \left(\Tilde{\rho}_{v} - c(X_t, v(X_t))\right) \D{t}\right]\,.
\end{align*}Now dividing both sides of the above inequality by $T$ and letting $T\to \infty$, it follows that
\begin{align*}
 \limsup_{T\to \infty}\frac{1}{T}\Exp_x^{v}\left[\int_0^{T} c(X_t, v(X_t)) \D{t}\right] \,\leq\, \Tilde{\rho}_{v}\,.
\end{align*} Thus, $\rho^v \leq \Tilde{\rho}_{v}$. This indeed implies that $\rho^v = \Tilde{\rho}_{v}$\,. The representation \cref{EErgonearPoisso1B} of $V^v$ follows by closely mimicking the argument of \cite[Lemma~3.6.9]{ABG-book}. Therefore, we have a solution pair $(V^v, \rho_v)$ to \cref{EErgonearPoisso1A} satisfying (i) and (ii). 

Next we want to prove that the solution pair is unique. To this end, let $(\hat{V}^v, \hat{\rho}_v)\in \Sobl^{2,p}(\Rd)\times \RR$, \, $1< p < \infty$, with $\hat{V}^v(0) = 0$ and $\inf_{\Rd} \hat{V}^v > -\infty$ and $\hat{\rho}_v \leq \int_{\Rd}\int_{\Act} c(x,\zeta)v(x)(\D \zeta)\eta_{v}(\D x)$, satisfying
\begin{equation}\label{EErgonearPoisso1J}
\hat{\rho}_v = \left[\sL_{v}\hat{V}^v(x) + c(x, v(x))\right]
\end{equation}   
Applying It$\hat{\rm o}$-Krylov formula and using \cref{EErgonearPoisso1J} we obtain
\begin{align}\label{EErgonearPoisso1L}
\limsup_{T\to \infty}\frac{1}{T}\Exp_x^{v}\left[\int_0^{T} c(X_t, v(X_t)) \D{t}\right] \,\leq\, \hat{\rho}_{v}
\end{align} Since, $\hat{\rho}_v \leq \inf_{\Rd}\sE_x(c, v)$, from \cref{EErgonearPoisso1L} we obtain $\hat{\rho}^{v} = \rho_{v}$\,. Now, from \cref{EErgonearPoisso1G}, applying It$\hat{\rm o}$-Krylov formula, we deduce that
\begin{align}\label{EErgonearPoisso1N}
\hat{V}^v(x)\,=\, \Exp_x^{v}\left[\int_0^{\uuptau_{r}\wedge \uptau_{R}} \left(c(X_t, v(X_t)) - \hat{\rho}_{v}\right) \D{t} + \hat{V}^{v}\left(X_{\uuptau_{r}\wedge \uptau_{R}}\right)\right]\,.
\end{align} Since $v$ is stable and $\hat{V}^v$ is bounded from below, for all $x\in \Rd$ we have
\begin{equation*}
\liminf_{R\to\infty}\Exp_x^{v}\left[\hat{V}^{v}\left(X_{\uptau_{R}}\right)\Ind_{\{\uuptau_{r}\geq \uptau_{R}\}}\right]\geq 0\,.
\end{equation*}Hence, letting $R\to\infty$ by Fatou's lemma from \cref{EErgonearPoisso1N},  it follows that
\begin{align*}
\hat{V}^v(x)&\,\geq\, \Exp_x^{v}\left[\int_0^{\uuptau_{r}} \left(c(X_t, v(X_t)) - \hat{\rho}_{v}\right) \D{t} +\hat{V}^{v}\left(X_{\uuptau_{r}}\right)\right]\nonumber\\
&\,\geq\, \Exp_x^{v}\left[\int_0^{\uuptau_{r}} \left(c(X_t, v(X_t)) - \hat{\rho}_{v}\right) \D{t}\right] +\inf_{\sB_r}\hat{V}^{v}\,.
\end{align*}Since $\hat{V}^{v}(0) =0$, letting $r\to 0$, we obtain
\begin{align}\label{EErgonearPoisso1o}
\hat{V}^v(x)\,\geq\, \limsup_{r\downarrow 0}\Exp_x^{v}\left[\int_0^{\uuptau_{r}} \left(c(X_t, v(X_t)) - \hat{\rho}_{v}\right) \D{t} \right]\,.
\end{align}
From \cref{EErgonearPoisso1B} and \cref{EErgonearPoisso1o}, it is easy to see that $V^v - \hat{V}^v \leq 0$ in $\Rd$. On the other hand by \cref{EErgonearPoisso1A} and \cref{EErgonearPoisso1J} one has $\sL_{v}\left(V^v - \hat{V}^v\right)(x) = 0$ in $\Rd$. Hence, applying strong maximum principle \cite[Theorem~9.6]{GilTru}, one has $V^v = \hat{V}^v$. This proves uniqueness.
\end{proof}
Next we prove the robustness result, i.e., we prove that $\sE_{x}(c, v_{n}^*)\to \rho$ as $n\to \infty$, where $v_{n}^*$ is an optimal ergodic control of the approximated model (see,  Theorem~\ref{ergodicnearmono2})\,. In order to establish this result we will assume that $\widehat{\Theta}\df \{\eta_{v_n^*}: n\in \NN\}$ is tight, where $\eta_{v_n^*}$ is the unique invariant measure of \cref{E1.1} corresponding to $v_n^*$\,.
\begin{theorem}\label{ErgodNearmonoRobu1}
Suppose that Assumptions (A1) - (A6) hold. Also, assume that 
\begin{equation}\label{ENearmonotApro1}
\liminf_{\norm{x}\to\infty}\inf_{\zeta\in \Act} c(x,\zeta) > \inf_{x\in\Rd}\sup_{n\in\NN}\sE_x(c, v_n^*)\,,
\end{equation} and the sets $\widehat{\Theta}$ and $\Theta$ are tight. Then, we have
\begin{equation}\label{EErgoRobust1}
\lim_{n\to\infty} \inf_{x\in\Rd}\sE_{x}(c, v_{n}^*) = \sE^{*}(c)\,.
\end{equation}
\end{theorem}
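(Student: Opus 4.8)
The plan is to run the same continuity-then-robustness strategy as for the discounted case, but built on the Poisson-equation analysis of Theorem~\ref{NearmonotPoisso} and the continuity statement of Theorem~\ref{ErgoContnuity}. First I would apply Theorem~\ref{NearmonotPoisso} to each optimal policy $v_n^*$ of the $n$-th approximate model. Each $v_n^*$ is stable for the true dynamics \cref{E1.1} (its invariant measure $\eta_{v_n^*}$ sits in $\widehat\Theta$), and by \cref{ENearmonotApro1} one has $\liminf_{\norm{x}\to\infty}\inf_{\zeta}c(x,\zeta)>\inf_x\sup_m\sE_x(c,v_m^*)\ge\inf_x\sE_x(c,v_n^*)$, so the hypotheses hold for every $n$; this gives unique pairs $(V^{v_n^*},\rho_{v_n^*})\in\Sobl^{2,p}(\Rd)\times\RR$ with $V^{v_n^*}(0)=0$, $\inf_{\Rd}V^{v_n^*}>-\infty$, $\rho_{v_n^*}=\inf_x\sE_x(c,v_n^*)=\int_{\Rd}\int_{\Act}c(x,\zeta)v_n^*(x)(\D\zeta)\eta_{v_n^*}(\D x)$, solving $\rho_{v_n^*}=\sL_{v_n^*}V^{v_n^*}+c(\cdot,v_n^*(\cdot))$. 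The key uniformity is that \cref{ENearmonotApro1} permits a single ball $\sB_{R_0}$, independent of $n$, containing every super-level set $\kappa(\rho^{v_n^*})$, so the interior estimates \cref{EErgonearPoisso1H}--\cref{EErgonearPoisso1I} behind Theorem~\ref{NearmonotPoisso} become uniform: $\norm{V^{v_n^*}}_{\Sob^{2,p}(\sB_R)}\le\tilde C(R,p)(1+M)$ and $V^{v_n^*}\ge-\tilde C(R_0)(1+M)$.

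Then, as in \cref{ETC1.3BC} and the proof of Theorem~\ref{ErgoContnuity}, I would use Banach--Alaoglu, the compact embedding $\Sob^{2,p}\hookrightarrow\cC^{1,\beta}$, compactness of $\Usm$, and boundedness of $\{\rho_{v_n^*}\}$ to pass to a subsequence along which $V^{v_{n_k}^*}\to\hat V$ weakly in $\Sobl^{2,p}(\Rd)$ and strongly in $\cC^{1,\beta}_{loc}(\Rd)$, $\rho_{v_{n_k}^*}\to\hat\rho$, and $v_{n_k}^*\to\tilde v^*$ in $\Usm$. Multiplying the Poisson equation by a test function and letting $k\to\infty$ — with the splitting $b(x,v_{n_k}^*)\cdot\grad V^{v_{n_k}^*}=b(x,v_{n_k}^*)\cdot\grad(V^{v_{n_k}^*}-\hat V)+(b(x,v_{n_k}^*)-b(x,\tilde v^*))\cdot\grad\hat V$ and the topology of $\Usm$, exactly as around \cref{ETC1.4EA} — gives $\hat\rho=\sL_{\tilde v^*}\hat V+c(\cdot,\tilde v^*(\cdot))$ a.e., with $\hat V(0)=0$, $\inf_{\Rd}\hat V>-\infty$; by the reasoning used after Theorem~\ref{ergodicnearmono2}, near-monotonicity then makes $\tilde v^*$ stable for \cref{E1.1}. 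To evaluate $\hat\rho$ I would use tightness of $\widehat\Theta$ with \cite[Lemma~3.2.6]{ABG-book} to get $\eta_{v_{n_k}^*}\to\eta_{\tilde v^*}$ in total variation, and then the decomposition at the end of Theorem~\ref{ErgoContnuity} (one term bounded by $\norm{c}_{\infty}\norm{\eta_{v_{n_k}^*}-\eta_{\tilde v^*}}_{TV}$, the other handled by the $\Usm$-topology after truncating the density of $\eta_{\tilde v^*}$ into $L^1\cap L^2$) to obtain $\rho_{v_{n_k}^*}\to\int_{\Rd}\int_{\Act}c(x,\zeta)\tilde v^*(x)(\D\zeta)\eta_{\tilde v^*}(\D x)=\hat\rho$. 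Refining the subsequence once more so that Theorem~\ref{ErgoContnuity}'s argument also runs along it (using tightness of $\Theta$ so that $\eta^{n_k}_{v_{n_k}^*}\to\eta_{\tilde v^*}$ in total variation), the uniqueness of the compatible solution pair of \cite[Theorem~1.1]{AA13} identifies $\tilde v^*$ as ergodic optimal for the true model, so $\hat\rho=\sE^*(c)$. Since $\inf_x\sE_x(c,v_{n_k}^*)=\rho_{v_{n_k}^*}\to\sE^*(c)$ along every subsequence, the whole sequence converges, proving \cref{EErgoRobust1}. A shortcut: tightness of both $\Theta$ and $\widehat\Theta$ forces $\norm{\eta^{n_k}_{v_{n_k}^*}-\eta_{v_{n_k}^*}}_{TV}\to0$, and since $c_{n}\to c$ uniformly in $\zeta$ for a.e.\ $x$ this yields $\rho_{v_{n_k}^*}-\sE^{n_k*}(c_{n_k})\to0$, whence Theorem~\ref{ErgoContnuity} gives $\rho_{v_{n_k}^*}\to\sE^*(c)$ directly, bypassing the re-identification of $\tilde v^*$.

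The step I expect to be the main obstacle is the uniform-in-$n$ control: one must confirm that the ball $\sB_{R_0}$ governing all the Poisson estimates can be chosen independently of $n$ — this is exactly where the strengthened near-monotonicity hypothesis \cref{ENearmonotApro1} is indispensable — and then thread together three convergences (of $v_{n_k}^*$ in $\Usm$, of $V^{v_{n_k}^*}$ in $\Sobl^{2,p}$, and of the two families $\widehat\Theta$, $\Theta$ of invariant measures in total variation) along a single common subsequence, so that the limiting control can be recognized via Theorem~\ref{ErgoContnuity} and \cite{AA13}. The delicate analytic point nested inside this is passing to the limit against the fixed — and a priori merely $L^1$ — density of $\eta_{\tilde v^*}$, handled by truncation just as in Theorem~\ref{ErgoContnuity}.
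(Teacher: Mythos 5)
Your proposal is correct and follows essentially the same route as the paper: Theorem~\ref{NearmonotPoisso} applied to each $v_n^*$ with the uniform ball $\sB_{R_0}$ supplied by \cref{ENearmonotApro1}, Banach--Alaoglu extraction along a common subsequence with $v_{n_k}^*\to\hat v^*$ in $\Usm$, identification of the limits via tightness of $\widehat\Theta$ and $\Theta$ and the uniqueness results, and finally Theorem~\ref{ErgoContnuity} to pin the common limit to $\sE^*(c)$. Your concluding ``shortcut'' is just a repackaging of the paper's triangle-inequality step $|\rho_{v_{n_k}^*}-\rho|\le|\rho_{v_{n_k}^*}-\rho_{n_k}|+|\rho_{n_k}-\rho|$, resting on the same total-variation convergence of both families of invariant measures to $\eta_{\hat v^*}$.
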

\begin{proof} We shall follow a similar proof program as that of Theorem ~\ref{TC1.4}, under the discounted setup. Since $c$ is bounded, we have $\left(\rho_{v_{n}^*} \,\df\,\right) \inf_{x\in \Rd}\sE_{x}(c, v_{n}^*) \leq \norm{c}_{\infty}$\,. From our assumption \cref{ENearmonotApro1}, we know that $\liminf_{\norm{x}\to\infty}\inf_{\zeta\in \Act} c(x,\zeta) > \rho_{v_{n}^*}$\,. Hence, from Theorem~\ref{NearmonotPoisso}, we have there exists a unique pair $(V^{v_{n}^*}, \rho_{v_{n}^*})\in \Sobl^{2,p}(\Rd)\times \RR$, \, $1< p < \infty$, with $V^{v_{n}^*}(0) = 0$ and $\inf_{\Rd} V^{v_{n}^*} > -\infty$, satisfying
\begin{equation}\label{ErgodNearmonoRobu1A}
\rho_{v_{n}^*} = \left[\sL_{v_{n}^*}V^{v_{n}^*}(x) + c(x, {v_{n}^*}(x))\right],
\end{equation} with $\rho_{v_{n}^*} = \inf_{x\in \Rd}\sE_{x}(c, v_{n}^*) = \int_{\Rd}\int_{\Act} c(x,\zeta)v_{n}^*(x)(\D \zeta)\eta_{v_{n}^*}(\D x)$\,.
Moreover, in view of assumption \cref{ENearmonotApro1}, from \cref{EErgonearPoisso1H} and \cref{EErgonearPoisso1I}, we have 
\begin{equation}\label{ErgodNearmonoRobu1B}
\norm{V^{v_{n}^*}}_{\Sob^{2,p}(\sB_R)}\leq \kappa_1\quad\text{and}\quad V^{v_{n}^*}(x)\geq - \kappa_2\,\,\, \text{for all}\,\,\, x\in \Rd\,,
\end{equation} where $\kappa_1, \kappa_2$ are constants independent of $n\in\NN$\,. Thus by the Banach-Alaoglu theorem and standard diagonalization argument (as in \cref{ETC1.3BC}), we deduce that exists $\hat{V}\in \Sobl^{2,p}(\Rd)$ such that along a sub-sequence
\begin{equation}\label{ErgodNearmonoRobu1C}
\begin{cases}
V^{v_{n_k}^*}\to & \hat{V}\quad \text{in}\quad \Sobl^{2,p}(\Rd)\quad\text{(weakly)}\\
V^{v_{n_k}^*}\to & \hat{V}\quad \text{in}\quad \cC^{1, \beta}_{loc}(\Rd)\quad\text{(strongly)}\,.
\end{cases}       
\end{equation}
Again, since $\rho_{v_{n}^*} \leq M$, along a further sub-sequence (without loss of generality denoting by same sequence), we have $\rho_{v_{n_k}^*}\to \hat{\rho}$ as $k\to \infty$\,. Since $\Usm$ is compact along a further subsequence (without loss of generality denoting by same sequence) we have $v_{n_k}^* \to \hat{v}^*$ as $k\to\infty$\,. Now, as before, multiplying by test function and letting $k\to\infty$, from \cref{ErgodNearmonoRobu1A}, we deduce that the pair $(\hat{V}, \hat{\rho})\in \Sobl^{2,p}(\Rd)\times \RR$, \, $1< p < \infty$, satisfies
\begin{equation}\label{ErgodNearmonoRobu1D}
\hat{\rho} = \left[\sL_{\hat{v}^*}\hat{V}(x) + c(x, {\hat{v}^*}(x))\right]
\end{equation} Since $V^{v_{n_k}^*}(0) = 0$ for all $k\in \NN$, it is easy to see that $\hat{V}(0) = 0$\,. Also, by \cref{ErgodNearmonoRobu1B}, it follows that 
$\inf_{\Rd} \hat{V} > -\infty$\,. Hence, using \cref{ENearmonotApro1} and \cref{ErgodNearmonoRobu1D}, we have $\hat{v}^*\in \Usm$ is stable. Since $\widehat{\Theta}$ is tight, in view of \cite[Lemma~3.2.6]{ABG-book}, it is easy to see that $\hat{\rho} = \int_{\Rd}\int_{\Act} c(x,\zeta)\hat{v}^*(x)(\D \zeta)\eta_{\hat{v}^*}(\D x)$\,. Thus, by Lemma~\ref{NearmonotPoisso}, we deduce that $(\hat{V}, \hat{\rho})\equiv (V^{\hat{v}^*}, \rho_{\hat{v}^*})$\,.

Note that 
\begin{equation*}
|\rho_{v_{n_k}^*} - \rho| \leq |\rho_{v_{n_k}^*} - \rho_{n_k}| + |\rho_{n_k} - \rho|\,. 
\end{equation*} Since $|\rho_{n_k} - \rho| \to 0$ as $k\to\infty $ (see, Theorem~\ref{ErgoContnuity}), to complete the proof we have to show that $|\rho_{v_{n_k}^*} - \rho_{n_k}|\to 0$ as $k\to \infty$\,. From Theorem~\ref{ergodicnearmono2}, we know that the pair $(V_{n_k}, \rho_{n_k})\in \Sobl^{2,p}(\Rd)\times \RR$, \, $1< p < \infty$, with $V_{n_k}(0) = 0$, satisfies
\begin{equation}\label{ErgodNearmonoRobu1E}
\rho_{n_k} = \min_{\zeta \in\Act}\left[\sL_{\zeta}^{n_k}V_{n_k}(x) + c_{n_k}(x, \zeta)\right]\,.
\end{equation} For any minimizing selector $v_{n_k}^*\in \Usm$, rewriting \cref{ErgodNearmonoRobu1E}, we get 
\begin{equation}\label{ErgodNearmonoRobu1F}
\rho_{n_k} = \left[\sL_{v_{n_k}^*}^{n_k}V_{n_k}(x) + c_{n_k}(x, v_{n_k}^*(x))\right]\,.
\end{equation} Now, in view of estimates \cref{EErgoContnuity1F} and \cref{EErgoContnuity1J}, it follows that
\begin{equation}\label{ErgodNearmonoRobu1G}
\norm{V_{n_k}}_{\Sob^{2,p}(\sB_R)}\leq \kappa_3\quad\text{and}\quad V_{n_k}(x)\geq - \kappa_4\,\,\, \text{for all}\,\,\, x\in \Rd\,,
\end{equation} where $\kappa_3, \kappa_4$ are constants independent of $k\in\NN$\,. Hence, by the Banach-Alaoglu theorem and standard diagonalization argument (see \cref{ETC1.3BC}), we have there exists $\bar{V}\in \Sobl^{2,p}(\Rd)$ such that along a sub-sequence
\begin{equation}\label{ErgodNearmonoRobu1H}
\begin{cases}
V_{n_k}\to & \bar{V}\quad \text{in}\quad \Sobl^{2,p}(\Rd)\quad\text{(weakly)}\\
V_{n_k}\to & \bar{V}\quad \text{in}\quad \cC^{1, \beta}_{loc}(\Rd) \quad\text{(strongly)}\,.
\end{cases}       
\end{equation}Also, $\rho_{n_k} \leq M$ implies that along a further subsequence (denoting by same sequence without loss generality) $\rho_{n_k} \to \bar{\rho}$. Since $v_{n_k}^* \to \hat{v}^*$ in $\Usm$, multiplying by test functions and letting $k\to\infty$ from \cref{ErgodNearmonoRobu1F}, we obtain that the pair $(\bar{V}, \bar{\rho})\in \Sobl^{2,p}(\Rd)\times \RR$, \, $1< p < \infty$ satisfies
\begin{equation}\label{ErgodNearmonoRobu1I}
\bar{\rho} = \left[\sL_{\hat{v}^*}\bar{V}(x) + c(x, \hat{v}^*(x))\right]\,.
\end{equation} Form \cref{ErgodNearmonoRobu1G}, it easy to see that $\inf_{\Rd}\bar{V} > -\infty$. Also, since $V_{n_k}(0) = 0$ for all $k\in\NN$, we have $\bar{V}(0) = 0$\,. Since $\Theta$ is tight, arguing as in proof of Theorem~\ref{ErgoContnuity}, we deduce that $\bar{\rho} = \int_{\Rd}\int_{\Act} c(x,\zeta)\hat{v}^*(x)(\D \zeta)\eta_{\hat{v}^*}(\D x)$\,. Thus, by uniqueness of solution of \cref{ErgodNearmonoRobu1I} (see, Theorem~\ref{NearmonotPoisso}) it follows that $(\bar{V}, \bar{\rho})\equiv (V^{\hat{v}^*}, \rho_{\hat{v}^*})$\,. Since both $\rho_{v_{n_k}^*}$ and $\rho_{n_k}$ converges to same limit $\rho_{\hat{v}^*}$, we deduce that  $|\rho_{v_{n_k}^*} - \rho_{n_k}|\to 0$ as $k\to \infty$\,. This completes the proof of the theorem.
\end{proof}
\subsection{Analysis under Lyapunov stability}\label{Lyapunov stability}
In this section we study the robustness problem for ergodic cost criterion under Lyapunov stability assumption. We assume the following Foster-Lyapunov condition on the dynamics.
\begin{itemize}
\item[\hypertarget{A7}{{(A7)}}]
\begin{itemize}
\item[(i)]
There exists a positive constant $\widehat{C}_0$, and a pair of inf-compact  functions $(\Lyap, h)\in \cC^{2}(\Rd)\times\cC(\Rd\times\Act)$ (i.e., the sub-level sets $\{\Lyap\leq k\} \,,\{h\leq k\}$ are compact or empty sets in $\Rd$\,, $\Rd\times\Act$ respectively for each $k\in\RR$) such that
\begin{equation}\label{Lyap1}
\sL_{\zeta}\Lyap(x) \leq \widehat{C}_{0} - h(x,u)\quad \text{for all}\,\,\, (x,\zeta)\in \Rd\times \Act\,,
\end{equation} where  $h$ is locally Lipschitz continuous in its first argument uniformly with respect to second. 
\item[(ii)] For each $n\in\NN$\,, we have
\begin{equation}\label{Lyap2}
\sL_{\zeta}^{n}\Lyap(x) \leq \widehat{C}_{0} - h(x,u)\quad \text{for all}\,\,\, (x,\zeta)\in \Rd\times \Act\,,
\end{equation} where the functions $\Lyap, h$ are as in \cref{Lyap1}\,. 
\end{itemize} 
\end{itemize} 
Combining \cite[Theorem~3.7.11]{ABG-book} and \cite[Theorem~3.7.12]{ABG-book}, we have the following complete characterization of the ergodic optimal control.
\begin{theorem}\label{TErgoOpt1}
Suppose that assumptions (A1)-(A4) and (A7)(i) hold. Then the ergodic HJB equation
\begin{equation}\label{EErgoOpt1A}
\rho = \min_{\zeta \in\Act}\left[\sL_{\zeta}V^*(x) + c(x, \zeta)\right]
\end{equation} admits unique solution $(V^*, \rho)\in \cC^2(\Rd)\cap \sorder(\Lyap)\times \RR$ satisfying $V^*(0) = 0$\,.
Moreover, we have
\begin{itemize}
\item[(i)]$\rho = \sE^*(c)$
\item[(ii)] a stationary Markov control $v^*\in \Usm$ is an optimal control (i.e., $\sE_x(c, v^*) = \sE^*(c)$) if and only if it satisfies
\begin{equation}\label{EErgoOpt1B}
\min_{\zeta \in\Act}\left[\sL_{\zeta}V^*(x) + c(x, \zeta)\right] \,=\, \trace\bigl(a(x)\grad^2 V^*(x)\bigr) + b(x,v^*(x))\cdot \grad V^*(x) + c(x, v^*(x))\,,\quad \text{a.e.}\,\, x\in\Rd\,.
\end{equation}
\item[(iii)] for any $v^*\in \Usm$ satisfying \cref{EErgoOpt1B}, we have
\begin{equation}\label{EErgoOpt1C}
V^*(x) \,=\, \lim_{r\downarrow 0}\Exp_{x}^{v^*}\left[\int_{0}^{\uuptau_{r}} \left( c(X_t, v^*(X_t)) - \sE^*(c)\right)\D t\right] \quad\text{for all}\,\,\, x\in \Rd\,.
\end{equation}
\end{itemize} 
\end{theorem}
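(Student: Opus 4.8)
The plan is to recover the statement from \cite[Theorem~3.7.11]{ABG-book} and \cite[Theorem~3.7.12]{ABG-book}, and I would organize the argument in close parallel to the reasoning already used in \cref{TC1.3,NearmonotPoisso}. \emph{Existence via the vanishing-discount limit.} First I would take the family $\{V_\alpha\}_{\alpha>0}$ of $\alpha$-discounted value functions from \cref{TD1.1}. Applying the It\^o--Krylov formula to $\Lyap$ and using \cref{Lyap1} gives the Dynkin bound $\Exp_x^{U}[\Lyap(X_{t})]\le \Lyap(x)+\widehat{C}_0 t$ for all $U\in\Uadm$, which makes $\alpha V_\alpha$ locally bounded uniformly in $\alpha$; the uniform ellipticity of the $\sL_\zeta$ then yields, as in \cite[Lemma~3.6.3]{ABG-book}, local oscillation and $\Sob^{2,p}$ estimates for $V_\alpha-V_\alpha(0)$ uniform in $\alpha$. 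Extracting $\alpha_k\downarrow 0$ exactly as in \cref{TC1.3}, one gets $V_{\alpha_k}-V_{\alpha_k}(0)\to V^*$ in $\cC^{1,\beta}_{loc}(\Rd)$ and weakly in $\Sobl^{2,p}(\Rd)$, with $\alpha_k V_{\alpha_k}(0)\to\rho$, and passing to the limit in the discounted HJB equation \cref{OptDHJB} (multiply by a test function) produces \cref{EErgoOpt1A}. The inf-compactness of $h$ and \cref{Lyap1} pass to the limit to give $V^*\in\sorder(\Lyap)$, elliptic regularity under (A4) upgrades $V^*$ to $\cC^2(\Rd)$, and normalizing $V^*(0)=0$ fixes the additive constant.

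\emph{Optimality and identification of $\rho$.} Next I would take a measurable minimizing selector $v^*\in\Usm$ of \cref{EErgoOpt1A}. Since $\sL_{v^*}\Lyap\le\widehat{C}_0-h(\cdot,v^*(\cdot))$ with $h$ inf-compact, the diffusion under $v^*$ is positive recurrent with a unique invariant probability $\eta_{v^*}$. Using $V^*\in\sorder(\Lyap)$ — so $\abs{V^*}\le\varepsilon\Lyap+C_\varepsilon$ for every $\varepsilon>0$ — together with the Dynkin bound, one gets $\Exp_x^{v^*}[V^*(X_T)]/T\to 0$; applying It\^o--Krylov to $V^*$ under $v^*$, dividing by $T$ and letting $T\to\infty$ gives $\rho=\int_{\Rd}\int_{\Act}c(x,\zeta)v^*(x)(\D\zeta)\,\eta_{v^*}(\D x)=\sE_x(c,v^*)$ for all $x$. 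For arbitrary $U\in\Uadm$ the same computation, with the identity in \cref{EErgoOpt1A} weakened to $\sL_{U_t}V^*\ge\rho-c(X_t,U_t)$, yields $\rho\le\sE_x(c,U)$; taking infima gives $\rho=\sE^*(c)$ and that every minimizing selector is optimal. For the converse in (ii), if $v$ is ergodic-optimal then equality in the HJB inequality must hold $\eta_v$-a.e., and irreducibility of the diffusion upgrades this to a.e.\ on $\Rd$.

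\emph{Uniqueness and the representation.} For uniqueness, suppose $(\widehat V,\widehat\rho)\in(\cC^2(\Rd)\cap\sorder(\Lyap))\times\RR$ also solves \cref{EErgoOpt1A} with $\widehat V(0)=0$; the optimality step applied to $\widehat V$ forces $\widehat\rho=\sE^*(c)=\rho$. Then, fixing a minimizing selector $v^*$ of $V^*$ and applying It\^o--Krylov to $\widehat V$ on $\uuptau_r\wedge\uptau_R$, letting $R\to\infty$ (Fatou, positive recurrence, and the lower bound $\widehat V\ge-\kappa$) and then $r\downarrow 0$ gives $\widehat V(x)\ge\lim_{r\downarrow 0}\Exp_x^{v^*}[\int_0^{\uuptau_r}(c(X_t,v^*(X_t))-\rho)\,\D t]$, while the same computation for $V^*$, where equality holds, yields precisely \cref{EErgoOpt1C}; hence $V^*-\widehat V\ge 0$ and $\sL_{v^*}(V^*-\widehat V)=0$ on $\Rd$, so the strong maximum principle \cite[Theorem~9.6]{GilTru} gives $V^*=\widehat V$, just as in \cref{NearmonotPoisso}.

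The hard part is the two-sided control of the boundary terms $\Exp_x^{v^*}[V^*(X_T)]/T$ and $\Exp_x^{v^*}[\widehat V(X_{\uptau_R})\Ind_{\{\uuptau_r\ge\uptau_R\}}]$ — i.e.\ making the $\sorder(\Lyap)$ growth interact correctly with the Lyapunov drift inequality so that these vanish — and, simultaneously, ensuring that the minimizing selector yields a \emph{positive recurrent} (not merely nonexplosive) diffusion, which is exactly what the inf-compactness of $h$ in \cref{Lyap1} provides; everything else is a limiting argument of the type already carried out in \cref{TC1.3,NearmonotPoisso}, or a direct appeal to \cite[Theorem~3.7.11]{ABG-book} and \cite[Theorem~3.7.12]{ABG-book}.
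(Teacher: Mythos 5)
The paper does not actually prove \cref{TErgoOpt1}: it is imported wholesale by citing \cite[Theorem~3.7.11]{ABG-book} and \cite[Theorem~3.7.12]{ABG-book}, which is also your primary route, and your supplementary vanishing-discount sketch is precisely the argument used in that reference (and echoed in the paper's own \cref{TErgoExisPoiss1}). So in substance you are doing the same thing as the paper, only with the proof unpacked rather than cited.

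One step of the unpacking is stated with the inequalities pointing the wrong way. In the uniqueness argument you fix a minimizing selector $v^*$ of $V^*$ and apply It\^o--Krylov to $\widehat V$; but along $v^*$ one only has $\sL_{v^*}\widehat V + c(\cdot,v^*(\cdot)) \ge \rho$, so the Dynkin identity gives $\widehat V(x) \le \Exp_x^{v^*}\bigl[\widehat V(X_{\uuptau_r\wedge\uptau_R})\bigr] + \Exp_x^{v^*}\bigl[\int_0^{\uuptau_r\wedge\uptau_R}(c-\rho)\,\D t\bigr]$, i.e.\ an \emph{upper} bound, not the lower bound $\widehat V(x)\ge\lim_{r\downarrow 0}\Exp_x^{v^*}[\cdots]$ you wrote (that lower bound, driven by Fatou and $\widehat V\ge-\kappa$, is the correct half in the Poisson-equation setting of \cref{NearmonotPoisso}, where equality holds in the equation; here the relevant boundary control is the upper one, $\limsup_{R}\Exp_x^{v^*}[\widehat V(X_{\uptau_R})\Ind_{\{\uuptau_r\ge\uptau_R\}}]\le 0$, supplied by $\widehat V\in\sorder(\Lyap)$ via \cite[Lemma~3.7.2(ii)]{ABG-book}). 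Likewise $\sL_{v^*}(V^*-\widehat V)$ is only $\le 0$, not $=0$, since $v^*$ need not be a minimizing selector for $\widehat V$. The conclusion survives: one gets $\widehat V\le V^*$, so $w\df V^*-\widehat V\ge 0$ is a supersolution ($\sL_{v^*}w\le 0$) attaining its interior minimum $w(0)=0$, and the strong minimum principle \cite[Theorem~9.6]{GilTru} gives $w\equiv 0$. With that correction the sketch is a faithful reconstruction of the cited result; the rest (uniform local $\Sob^{2,p}$ bounds in $\alpha$, passage to the limit against test functions, identification of $\rho$ with $\sE^*(c)$ via the sub/supersolution dichotomy) is sound and consistent with the techniques the paper uses elsewhere.
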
 
Again, from \cite[Theorem~3.7.11]{ABG-book} and \cite[Theorem~3.7.12]{ABG-book}, for the approximated model for each $n\in\NN$, we have the following complete characterization of the optimal control.
\begin{theorem}\label{TErgoOptApprox1}
Suppose that Assumptions (A5) and (A7)(ii) hold. Then the ergodic HJB equation
\begin{equation}\label{TErgoOptApprox1A}
\rho_n = \min_{\zeta \in\Act}\left[\sL_{\zeta}^nV(x) + c_n(x, \zeta)\right]
\end{equation} admits unique solution $(V^{n*}, \rho_n)\in \cC^2(\Rd)\cap \sorder(\Lyap)\times \RR$ satisfying $V^{n*}(0) = 0$\,.
Moreover, we have
\begin{itemize}
\item[(i)]$\rho_n = \sE^{n*}(c_n)$
\item[(ii)] a stationary Markov control $v_n^*\in \Usm$ is an optimal control (i.e., $\sE_x^n(c_n, v_n^{*}) = \sE^{n*}(c_n)$) if and only if it satisfies
\begin{equation}\label{TErgoOptApprox1B}
\min_{\zeta \in\Act}\left[\sL_{\zeta}^n V^{n*}(x) + c_n(x, \zeta)\right] \,=\, \trace\bigl(a_n(x)\grad^2 V^{n*}(x)\bigr) + b_n(x,v_n^*(x))\cdot \grad V^{n*}(x) + c(x, v_n^*(x))\,,\quad \text{a.e.}\,\, x\in\Rd\,.
\end{equation}
\item[(iii)] for any $v_n^*\in \Usm$ satisfying \cref{TErgoOptApprox1B}, we have
\begin{equation}\label{TErgoOptApprox1C}
V^{n*}(x) \,=\, \lim_{r\downarrow 0}\Exp_{x}^{v_n^*}\left[\int_{0}^{\uuptau_{r}} \left( c_n(X_t, v_n^*(X_t)) - \sE^{n*}(c_n)\right)\D t\right] \quad\text{for all}\,\,\, x\in \Rd\,.
\end{equation}
\end{itemize} 
\end{theorem}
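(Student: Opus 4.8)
The plan is to obtain Theorem~\ref{TErgoOptApprox1} as a direct application of \cite[Theorem~3.7.11]{ABG-book} and \cite[Theorem~3.7.12]{ABG-book} to the controlled diffusion $X^n$ governed by the SDE~\cref{ASE1.1}; the statement is precisely the $n$-dependent analogue of Theorem~\ref{TErgoOpt1}, so the only work is to check that, for each fixed $n\in\NN$, the data $(b_n,\upsigma_n,c_n)$ together with the Lyapunov pair $(\Lyap,h)$ satisfy exactly the standing hypotheses under which those two results are established.

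First I would verify the structural assumptions. By (A5)(iii), for each $n$ the drift $b_n$ and the diffusion matrix $\upsigma_n$ satisfy (A1)--(A3) (local Lipschitz continuity in $x$ uniformly in $\zeta$, affine growth, and uniform nondegeneracy on balls), so \cref{ASE1.1} has a unique strong solution which, under any stationary Markov control, is a strong Feller and hence strong Markov process, and the operators $\sL_{\zeta}^n$ of \cref{E-cIn} are well defined on $\cC^2(\Rd)$. Again by (A5)(iii), $c_n$ is bounded by $M$, jointly continuous in $(x,\zeta)$, and locally Lipschitz in $x$ uniformly in $\zeta$, i.e.\ $c_n$ satisfies (A4); in particular $\sE^{n*}(c_n)\le M<\infty$.

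Next I would feed in the Foster--Lyapunov condition (A7)(ii): there is a fixed pair of inf-compact functions $(\Lyap,h)\in\cC^2(\Rd)\times\cC(\Rd\times\Act)$ and a constant $\widehat{C}_0>0$ with $\sL_{\zeta}^n\Lyap(x)\le\widehat{C}_0-h(x,u)$ for all $(x,\zeta)\in\Rd\times\Act$. This is exactly the ``blanket'' stability hypothesis of \cite[Section~3.7]{ABG-book}: it makes every $v\in\Usm$ stable for $X^n$ with a uniform-in-control recurrence estimate, and, since $c_n$ is bounded and hence dominated by $\widehat{C}_0+h$ up to an additive constant, it places $c_n$ in the relevant class of admissible running costs. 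With (A1)--(A4) for the $n$-th model and this Lyapunov inequality in hand, \cite[Theorem~3.7.11]{ABG-book} delivers a unique pair $(V^{n*},\rho_n)\in\bigl(\cC^2(\Rd)\cap\sorder(\Lyap)\bigr)\times\RR$ with $V^{n*}(0)=0$ solving \cref{TErgoOptApprox1A}, together with $\rho_n=\sE^{n*}(c_n)$ and the characterization \cref{TErgoOptApprox1B} of optimal stationary Markov controls as minimizing selectors, while \cite[Theorem~3.7.12]{ABG-book} gives the stochastic representation \cref{TErgoOptApprox1C} for any such selector.

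There is no genuine obstacle beyond this bookkeeping. The one point deserving care is the regularity class $\cC^2(\Rd)$: it relies on the local Lipschitz continuity of $c_n$ assumed in (A5)(iii); had we only assumed the weaker (A4$'$), one would instead obtain $(V^{n*},\rho_n)\in\bigl(\Sobl^{2,p}(\Rd)\cap\sorder(\Lyap)\bigr)\times\RR$ for $p\ge d+1$, with all other conclusions unchanged. I would also note that the requirement that $(\Lyap,h)$ be common to all $n$ is not used here --- for a fixed $n$ only the single inequality~\cref{Lyap2} enters --- but this uniformity in $n$ is what the subsequent continuity and robustness arguments will exploit.
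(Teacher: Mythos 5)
Your proposal is correct and matches the paper's approach exactly: the paper establishes this theorem by directly invoking \cite[Theorem~3.7.11]{ABG-book} and \cite[Theorem~3.7.12]{ABG-book} for the approximating model, and your verification that (A5)(iii) supplies (A1)--(A4) for each fixed $n$ while (A7)(ii) supplies the blanket Foster--Lyapunov hypothesis is precisely the bookkeeping that justifies that citation. Your closing observations about the role of local Lipschitz continuity of $c_n$ for the $\cC^2$ regularity and about the uniformity of $(\Lyap,h)$ in $n$ being reserved for the later continuity/robustness arguments are both accurate.
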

From \cite[lemma~3.7.8]{ABG-book}, it is easy to see that the functions $V^{n*}, V^{*}$ are bounded from below. Next we show that under Assumption (A7), as $n\to\infty$ the optimal value $V^{n*}$ of the approximated model converges to the optimal value $V^{*}$ of the true model.
\begin{theorem}\label{TErgoOptCont}
Suppose that Assumptions (A1)-(A5) and (A7) hold. Then, it follows that 
\begin{equation}\label{ETErgoOptCont1}
\lim_{n\to\infty} \sE^{n*}(c_n) = \sE^{*}(c)\,.
\end{equation}
\end{theorem}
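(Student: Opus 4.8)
The plan is to follow the blueprint of the proofs of \cref{TC1.3} and \cref{ErgoContnuity}: derive $n$-uniform a priori estimates for the approximate ergodic HJB equations, extract a subsequential limit, show that the limit solves the \emph{true} ergodic HJB equation \cref{EErgoOpt1A} inside the uniqueness class $\cC^2(\Rd)\cap\sorder(\Lyap)$, and conclude by the uniqueness asserted in \cref{TErgoOpt1}. By \cref{TErgoOptApprox1}, for each $n$ one has a solution $(V^{n*},\rho_n)\in\bigl(\cC^2(\Rd)\cap\sorder(\Lyap)\bigr)\times\RR$ of \cref{TErgoOptApprox1A} with $V^{n*}(0)=0$ and $\rho_n=\sE^{n*}(c_n)\le M$. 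The feature that makes every estimate uniform is that the Lyapunov pair $(\Lyap,h)$ and the constant $\widehat{C}_0$ in (A7)(ii), as well as the constants in (A1)--(A3) and (A5)(iii), are common to all approximating models: running the vanishing-discount estimates of \cite[Section~3.7]{ABG-book} on $V^n_\alpha-V^n_\alpha(0)$ and letting $\alpha\downarrow0$ (exactly as in the proof of \cref{ErgoContnuity}) produces a constant $\kappa_1$ and a function $\psi\in\sorder(\Lyap)$, both independent of $n$, with $-\kappa_1\le V^{n*}\le\psi$ on $\Rd$; and feeding $\|c_n\|_\infty,\rho_n\le M$ into the interior elliptic estimate for \cref{TErgoOptApprox1A} as in \cref{ETC1.3A} gives $\norm{V^{n*}}_{\Sob^{2,p}(\sB_R)}\le\kappa_2(R,p)$ with $\kappa_2(R,p)$ independent of $n$, for every $R>0$ and $p\in(1,\infty)$.

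Granted these bounds, the Banach--Alaoglu theorem, the compact embedding $\Sob^{2,p}(\sB_R)\hookrightarrow\cC^{1,\beta}(\bar{\sB}_R)$ for $p>d$ and $\beta<1-d/p$, a diagonalization argument, and the compactness of $\Usm$ furnish a subsequence along which $\rho_{n_k}\to\tilde\rho\le M$, $V^{n_k*}\to\tilde V$ weakly in $\Sobl^{2,p}(\Rd)$ and strongly in $\cC^{1,\beta}_{loc}(\Rd)$, and $v^*_{n_k}\to\tilde v$ in $\Usm$, where $v^*_{n_k}$ is a measurable minimizing selector of \cref{TErgoOptApprox1A}. Passing to the limit yields $\tilde V(0)=0$ and $-\kappa_1\le\tilde V\le\psi$, hence $\tilde V\in\sorder(\Lyap)$. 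Testing \cref{TErgoOptApprox1A} against $\phi\in\cC^\infty_{\mathrm c}(\Rd)$ and letting $k\to\infty$ --- using $\upsigma_{n_k}\to\upsigma$ a.e. and the continuous convergence in controls of $b_{n_k},c_{n_k}$ from (A5) together with $V^{n_k*}\to\tilde V$ in $\cC^{1,\beta}_{loc}(\Rd)$ --- one passes to the limit term by term exactly as in \cref{ETC1.3C1A}--\cref{ETC1.3E}, obtaining $\tilde\rho=\min_{\zeta\in\Act}[\sL_\zeta\tilde V(x)+c(x,\zeta)]$ a.e.; pushing the selector through as in \cref{ErgoContnuity} also gives $\tilde\rho=\sL_{\tilde v}\tilde V(x)+c(x,\tilde v(x))$, the Lyapunov inequality is inherited by $\tilde v$, so $\tilde v$ is stable, and elliptic regularity (as in \cref{TD1.1}) upgrades $\tilde V$ to $\cC^2(\Rd)$.

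It remains to identify $\tilde\rho$. Integrating the Lyapunov inequality (A7)(ii) against $v^*_n$ and the associated invariant measure $\eta^n_{v^*_n}$ of \cref{ASE1.1} gives $\sup_n\int_{\Rd}\int_{\Act}h(x,\zeta)v^*_n(x)(\D\zeta)\eta^n_{v^*_n}(\D x)\le\widehat{C}_0$; since $h$ is inf-compact and $\Act$ is compact, the family $\{\eta^n_{v^*_n}\}$ is automatically tight --- in contrast to \cref{ErgoContnuity}, no extra hypothesis is needed here. Hence, along a further subsequence, $\eta^{n_k}_{v^*_{n_k}}\to\eta_{\tilde v}$ in total variation by \cite[Lemma~3.2.6]{ABG-book}, and splitting $\int_{\Rd}\int_{\Act}c_{n_k}v^*_{n_k}\,\D\eta^{n_k}_{v^*_{n_k}}-\int_{\Rd}\int_{\Act}c\,\tilde v\,\D\eta_{\tilde v}$ into a total-variation term (negligible since $\|c_{n_k}\|_\infty\le M$) and a control-topology term (negligible since $c_{n_k}\to c$ continuously in the control and $\eta_{\tilde v}$ is fixed), exactly as at the end of the proof of \cref{ErgoContnuity}, gives $\tilde\rho=\int_{\Rd}\int_{\Act}c(x,\zeta)\tilde v(x)(\D\zeta)\eta_{\tilde v}(\D x)$. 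Thus $(\tilde V,\tilde\rho)$ solves \cref{EErgoOpt1A} in $\cC^2(\Rd)\cap\sorder(\Lyap)$ with $\tilde V(0)=0$, so by the uniqueness in \cref{TErgoOpt1}, $(\tilde V,\tilde\rho)=(V^*,\rho)$ and in particular $\tilde\rho=\sE^*(c)$. Since every subsequence of $\{\sE^{n*}(c_n)\}$ has a further subsequence converging to $\sE^*(c)$, the whole sequence converges, which is \cref{ETErgoOptCont1}.

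The main obstacle is the first step: one needs a priori control of $V^{n*}$ that is simultaneously uniform in $n$ \emph{and} fine enough to trap the limit $\tilde V$ in the uniqueness class $\sorder(\Lyap)$, since a bare $\Sobl^{2,p}$ bound --- which sufficed in the discounted case of \cref{TC1.3} --- does not determine which solution of the ergodic HJB equation one reaches. This is exactly where the hypothesis that $(\Lyap,h)$ and $\widehat{C}_0$ in (A7) are shared by all approximating models is essential: it prevents the constants in the ABG vanishing-discount and elliptic estimates from degenerating as $n\to\infty$.
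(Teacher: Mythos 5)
Your proposal is correct and follows essentially the same route as the paper's proof: uniform-in-$n$ $\Sob^{2,p}$ bounds obtained from the vanishing-discount estimates, subsequential extraction via Banach--Alaoglu and compact embedding, passage to the limit in the HJB equation against test functions, the shared Lyapunov pair $(\Lyap,h)$ forcing the limit into $\cC^2(\Rd)\cap\sorder(\Lyap)$ with $\tilde V(0)=0$, and then the uniqueness of \cref{TErgoOpt1}. The only divergence is your invariant-measure identification of $\tilde\rho$ (via the automatic tightness coming from (A7)); the paper omits this step because the uniqueness in \cref{TErgoOpt1} is over the full pair $(V^*,\rho)$ in that class, so once $(\tilde V,\tilde\rho)$ is shown to lie there one immediately gets $\tilde\rho=\sE^*(c)$ --- your extra step is correct but redundant here (it is needed only in the near-monotone case of \cref{ErgoContnuity}, where the uniqueness result requires a compatible pair).
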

\begin{proof}
Since, $\|c_n\|_{\infty} \leq M,$ we get $\sE^{n*}(c_n) \leq M$\,. Also, \cref{Lyap1} implies that all $v\in\Usm$ is stable and $\inf_{v\in\Usm}\eta_v(\sB_R) > 0$ for any $R>0$ (see, \cite[Lemma~3.3.4]{ABG-book} and \cite[Lemma~3.2.4(b)]{ABG-book}). Thus from \cite[Theorem~3.7.6]{ABG-book}, we have there exist constants $\widehat{C}_1, \widehat{C}_2 >0$ depending only on the radius $R>0$ such that for all $\alpha >0$, we have
\begin{equation}\label{ETErgoOptCont1A}
\|V_\alpha^n(\cdot) - V_\alpha^n(0)\|_{\Sob^{2,p}(\sB_{R})} \leq \widehat{C}_1 \quad \text{and}\,\,\, \sup_{\sB_R}\alpha V_{\alpha}^{n} \leq \widehat{C}_2\,.
\end{equation}By standard vanishing discount argument (see \cite[Lemma~3.7.8]{ABG-book}) as $\alpha\to 0$ we have $V_\alpha^n(\cdot) - V_\alpha^n(0) \to V^{n*}$ and $\alpha V_\alpha^n(0) \to \rho_n$\,. Hence the estimates \cref{ETErgoOptCont1A} give us $\|V^{n*}\|_{\Sob^{2,p}(\sB_{R})} \leq \widehat{C}_1$\,. Since the constant $\widehat{C}_1$ is independent of $n$, by standard diagonalization argument and the Banach-Alaoglu theorem, we can extract a subsequence $\{V^{n_k*}\}$ such that for some $\widehat{V}^*\in \Sobl^{2,p}(\Rd)$ (as in \cref{ETC1.3BC})
\begin{equation}\label{ETErgoOptCont1B}
\begin{cases}
V^{n_k*}\to & \widehat{V}^*\quad \text{in}\quad \Sobl^{2,p}(\Rd)\quad\text{(weakly)}\\
V^{n_k*}\to & \widehat{V}^*\quad \text{in}\quad \cC^{1, \beta}_{loc}(\Rd) \quad\text{(strongly)}\,.
\end{cases} 
\end{equation} Also, since $\rho_n \leq M$ , along a further sub-sequence (with out loss of generality denoting by same sequence) we have $\rho_{n_k}\to \widehat{\rho}$ as $k\to \infty$\,. Now multiplying both sides of the equation \cref{TErgoOptApprox1A} by test functions $\phi$, we obtain 
\begin{equation*}
\int_{\Rd}\trace\bigl(a_{n_k}(x)\grad^2 V^{n_k*}(x)\bigr)\phi(x)\D x + \int_{\Rd} \min_{\zeta\in \Act} \{b_{n_k}(x,\zeta)\cdot \grad V^{n_k*}(x) + c_{n_k}(x, \zeta)\}\phi(x)\D x = \int_{\Rd} \rho_{n_k}\phi(x)\D x\,.
\end{equation*}
As in Theorem~\ref{TC1.4}, using \cref{ETErgoOptCont1B} and letting $k\to\infty$ it follows that $\widehat{V}^*\in \Sobl^{2,p}(\Rd)$ satisfies
\begin{equation}\label{TErgoOptCont1C}
\widehat{\rho} = \min_{\zeta \in\Act}\left[\sL_{\zeta}\widehat{V}^*(x) + c(x, \zeta)\right]\,.
\end{equation}Rewriting the equation \cref{TErgoOptCont1C}, we have 
\begin{equation*}
\trace\bigl(a(x)\grad^2 \widehat{V}^*(x)\bigr) =  f(x)\,,\quad \text{a.e.}\,\, x\in\Rd\,,
\end{equation*} where 
\begin{equation*}
f(x) = - \inf_{\zeta\in \Act}\left[b(x,\zeta)\cdot \grad \widehat{V}^*(x) + c(x, \zeta) - \widehat{\rho} \right]\,.
\end{equation*} In view of \cref{ETErgoOptCont1B} and assumptions (A1) and (A2), it is easy to see that $f\in \cC^{0, \beta}_{loc}(\Rd)$ where $0< \beta < 1 - \frac{d}{p}$\,. Thus, by elliptic regularity \cite[Theorem~3]{CL89} (also see, \cite[Theorem~9.19]{GilTru}), we obtain $\widehat{V}^*\in \cC^{2}(\Rd)$\,.  

Next we want to show that $\widehat{V}^*\in \sorder{(\Lyap)}$. Since $\sup_{n}\|c_n\| \leq M$ we have $1 + \tilde{c} \in \sorder{(h)}$, where $\tilde{c} \,\df\, \sup_n c_n$\,. Also, since $h$ is inf-compact for a large enough $r > 0$ we have $\displaystyle{\widehat{C}_0 - \inf_{\zeta\in\Act} h(x, \zeta) \leq - \epsilon}$ for all $x\in \sB_r^c$\,. Let $X_t^n$ be the solution of \cref{ASE1.1} corresponding to $v\in \Usm$\,. Hence, in view of \cref{Lyap2}, by It$\hat{\rm o}$-Krylov formula, for any $v\in \Usm$ and $x\in \sB_r^c\cap\sB_R$ we deduce that
\begin{equation*}
\Exp_{x}^{v}\left[\Lyap(X_{\uuptau_{r}^n \wedge \uptau_{R}^n}^n)\right] - \Lyap(x) = \Exp_{x}^{v}\left[\int_{0}^{\uuptau_{r}^n \wedge \uptau_{R}^n} \sL_{v}^n \Lyap(X_s^n) \D s\right] \leq -\epsilon \Exp_{x}^{v}\left[\uuptau_{r}^n \wedge \uptau_{R}^n\right]\,,  
\end{equation*} where $\uuptau_{r}^n \df \inf\{t\geq 0: X_t^n\in \sB_r\}$ and $\uptau_{R}^n \df \inf \{t\geq 0: X_t^n\in \sB_R^c\}$\,. Letting $R\to \infty$, by Fatou's lemma we obtain
\begin{equation}\label{TErgoOptCont1D}
\Exp_{x}^{v}\left[\uuptau_{r}^n\right] \leq \frac{1}{\epsilon} \Lyap(x)\quad \text{for all}\,\,\, x\in \sB_r^c\,\,\,\text{and}\,\,\, n\in\NN\,. 
\end{equation}
Again, by It$\hat{\rm o}$-Krylov formula, for any $v\in \Usm$ and $x\in \sB_r^c\cap\sB_R$ we have
\begin{equation*}
\Exp_{x}^{v}\left[\Lyap(X_{\uuptau_{r}^n \wedge \uptau_{R}^n}^n)\right] - \Lyap(x) = \Exp_{x}^{v}\left[\int_{0}^{\uuptau_{r}^n \wedge \uptau_{R}^n} \sL_{v}^n \Lyap(X_s^n) \D s\right] \leq  \Exp_{x}^{v}\left[\int_0^{\uuptau_{r}^n \wedge \uptau_{R}^n} (\widehat{C}_0 - h(X_s^n, v(X_s^n)))\D s \right]\,,  
\end{equation*} Thus, by Fatou's lemma letting $R\to\infty$ and using \cref{TErgoOptCont1D} we get
\begin{equation*}
\sup_{n\in\NN}\sup_{v\in\Usm}\Exp_{x}^{v}\left[\int_0^{\uuptau_{r}^n} h(X_s^n, v(X_s^n)) \D s\right] \leq \widehat{M}_1 \Lyap(x)\,,  
\end{equation*} for some positive constant $\widehat{M}_1$\,. Hence, by arguing as in the proof of \cite[Lemma~3.7.2 (i)]{ABG-book}, we have
\begin{equation}\label{TErgoOptCont1E}
\sup_{n\in \NN}\sup_{v\in\Usm} \Exp_{x}^{v}\left[\int_{0}^{\uuptau_{r}^n}(1 + \tilde{c}(X_s^n, v(X_s^n)))\D s\right]\in \sorder{(\Lyap)}\,.
\end{equation} Now, following the proof of \cite[Lemma~3.7.8]{ABG-book} (see, eq.(3.7.47)), it follows that
\begin{equation}\label{TErgoOptCont1F}
V^{n*}(x) \,\leq\, \sup_{v\in\Usm}\Exp_{x}^{v}\left[\int_{0}^{\uuptau_{r}^n} \left( c_n(X_t^n, v(X_t^n)) - \sE^{n*}(c_n)\right)\D t + V^{n*}(X_{\uuptau_{r}})\right]\,.
\end{equation} We know that for $p \geq d+1$ the space $\Sob^{2,p}(\sB_{R})$ is compactly embedded in $\cC^{1, \beta}(\bar{\sB}_R)$ where $0< \beta < 1 - \frac{d}{p}$\,. Since $\|V^{n*}\|_{\Sob^{2,p}(\sB_{R})} \leq \widehat{C}_1$ for some positive constant $\widehat{C}_1$ which depends only on $R$, we deduce that $\sup_{n\in\NN}\sup_{\sB_r}|V^{n*}| \leq \widehat{M}_2$, where $\widehat{M}_2 (>0)$ is a constant. Also, since $\sE^{n*}(c_n) \leq \|c_n\|_{\infty} \leq M$  from \cref{TErgoOptCont1F}, it is easy to see that
\begin{equation}\label{TErgoOptCont1H}
|V^{n*}(x)| \,\leq\, M\sup_{n\in\NN}\sup_{v\in\Usm}\Exp_{x}^{v}\left[\int_{0}^{\uuptau_{r}^n} \left( \tilde{c}(X_t^n, v(X_t^n)) + 1\right)\D t + \sup_{n\in\NN}\sup_{\sB_r}|V^{n*}|\right]\,.
\end{equation}
Therefore, by combining \cref{ETErgoOptCont1B}, \cref{TErgoOptCont1E} and \cref{TErgoOptCont1H}, we obtain $\widehat{V}^*\in \sorder{(\Lyap)}$\,. Since, $(\widehat{V}^*, \widehat{\rho})\in \cC^2(\Rd)\cap \sorder(\Lyap)\times \RR$ satisfying $V^*(0) = 0$ satisfies \cref{EErgoOpt1A}, by uniqueness result of Theorem~\ref{TErgoOpt1}, we deduce that $(\widehat{V}^*, \widehat{\rho}) \equiv (V^*, \rho)$. This completes the proof of the theorem.      
\end{proof}
Next theorem proofs existence of a unique solution to a certain equation in some suitable function space. This result will be very useful in establishing our robustness result.
\begin{theorem}\label{TErgoExisPoiss1}
Suppose that assumptions (A1)-(A4) and (A7)(i) hold. Then for each $v\in \Usm$ there exist a unique solution pair $(V^v, \rho^{v})\in \Sobl^{2,p}(\Rd)\cap \sorder(\Lyap)\times \RR$ for any $p >1$ satisfying
\begin{equation}\label{TErgoExisPoiss1A}
\rho^{v} = \sL_{v}V^v(x) + c(x, v(x))\quad\text{with}\quad V^v(0) = 0\,.
\end{equation}
Furthermore, we have
\begin{itemize}
\item[(i)]$\rho^{v} = \sE_x(c, v)$
\item[(ii)] for all $x\in\Rd$, we have
\begin{equation}\label{TErgoExisPoiss1B}
V^v(x) \,=\, \lim_{r\downarrow 0}\Exp_{x}^{v}\left[\int_{0}^{\uuptau_{r}} \left( c(X_t, v(X_t)) - \sE_x(c, v)\right)\D t\right]\,.
\end{equation}
\end{itemize} 
\end{theorem}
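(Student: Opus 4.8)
The plan is to construct $(V^{v},\rho^{v})$ by a vanishing-discount limit along the fixed policy $v$, following the scheme of Theorem~\ref{NearmonotPoisso} but with the Foster--Lyapunov inequality \cref{Lyap1} playing the role of near-monotonicity. For $\alpha>0$ the $\alpha$-discounted cost $\cJ_{\alpha}^{v}(\cdot,c)$ of \cref{EDiscost} is bounded by $M/\alpha$ and, by \cite[Lemma~A.3.7]{ABG-book}, solves $\sL_{v}\cJ_{\alpha}^{v}(x,c)-\alpha\cJ_{\alpha}^{v}(x,c)=-c(x,v(x))$. Since \cref{Lyap1} gives $\sL_{v}\Lyap(x)\le\widehat{C}_{0}-\inf_{\zeta\in\Act}h(x,\zeta)$ with $x\mapsto\inf_{\zeta\in\Act}h(x,\zeta)$ inf-compact, the policy $v$ is positive recurrent with a unique invariant probability measure $\eta_{v}$, and $\inf_{v\in\Usm}\eta_{v}(\sB_{R})>0$ for each $R>0$ (see \cite[Lemma~3.3.4]{ABG-book}, \cite[Lemma~3.2.4(b)]{ABG-book}); hence, by \cite[Theorem~3.7.6]{ABG-book}, for each $R>0$ there is a constant $\widehat{C}_{1}(R)$, independent of $\alpha$, with $\norm{\cJ_{\alpha}^{v}(\cdot,c)-\cJ_{\alpha}^{v}(0,c)}_{\Sob^{2,p}(\sB_{R})}\le\widehat{C}_{1}(R)$ and $\sup_{\sB_{R}}\alpha\,\cJ_{\alpha}^{v}(\cdot,c)\le M$, together with the corresponding oscillation estimate on $\sB_{2R}$. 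Using Banach--Alaoglu, the compact embedding $\Sob^{2,p}(\sB_{R})\hookrightarrow\cC^{1,\beta}(\bar{\sB}_{R})$ for $p\ge d+1$, and a diagonalization argument, along a sequence $\alpha\downarrow0$ one gets $\cJ_{\alpha}^{v}(\cdot,c)-\cJ_{\alpha}^{v}(0,c)\to V^{v}$ weakly in $\Sobl^{2,p}(\Rd)$ and strongly in $\cC^{1,\beta}_{loc}(\Rd)$, and $\alpha\,\cJ_{\alpha}^{v}(0,c)\to\rho^{v}$. Multiplying the Poisson equation by a test function and letting $\alpha\downarrow0$ yields $\rho^{v}=\sL_{v}V^{v}(x)+c(x,v(x))$ a.e.\ in $\Rd$, with $V^{v}(0)=0$; by interior $\Lp^{p}$ estimates (the right-hand side being bounded), $V^{v}\in\Sobl^{2,p}(\Rd)$ for every $p>1$.

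Next I would establish the growth bound $V^{v}\in\sorder(\Lyap)$, which is the heart of the argument, repeating essentially verbatim the steps in the proof of Theorem~\ref{TErgoOptCont} (see \cref{TErgoOptCont1D} and \cref{TErgoOptCont1H} there). Choosing $r$ large enough that $\widehat{C}_{0}-\inf_{\zeta\in\Act}h(x,\zeta)\le-\epsilon$ on $\sB_{r}^{c}$, the It\^o--Krylov formula for $\Lyap$ up to $\uuptau_{r}\wedge\uptau_{R}$ and Fatou's lemma give $\Exp_{x}^{v}[\uuptau_{r}]\le\epsilon^{-1}\Lyap(x)$ and then $\Exp_{x}^{v}\bigl[\int_{0}^{\uuptau_{r}}(1+\abs{c(X_{t},v(X_{t}))})\,\D t\bigr]\in\sorder(\Lyap)$ (using boundedness of $c$ and \cite[Lemma~3.7.2]{ABG-book}); combined with $\sup_{\sB_{r}}\abs{V^{v}}\le\widehat{M}$, coming from the $\Sob^{2,p}(\sB_{R})$ estimate and the embedding into $\cC^{1,\beta}$, and the bound $V^{v}(x)\le\Exp_{x}^{v}\bigl[\int_{0}^{\uuptau_{r}}(c(X_{t},v(X_{t}))-\rho^{v})\,\D t+V^{v}(X_{\uuptau_{r}})\bigr]$ together with its lower-bound analogue, this yields $\abs{V^{v}}\in\sorder(\Lyap)$. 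To identify $\rho^{v}$, apply It\^o--Krylov to $V^{v}$ up to $T\wedge\uptau_{R}$, let $R\to\infty$ using positive recurrence and $V^{v}\in\sorder(\Lyap)$, then divide by $T$ and let $T\to\infty$: the boundary term vanishes since $T^{-1}\Exp_{x}^{v}[V^{v}(X_{T})]\to0$ (from $V^{v}\in\sorder(\Lyap)$ and $\Exp_{x}^{v}[\Lyap(X_{T})]\le\Lyap(x)+\widehat{C}_{0}T$), while $T^{-1}\Exp_{x}^{v}[\int_{0}^{T}c(X_{t},v(X_{t}))\,\D t]\to\int_{\Rd}\int_{\Act}c(x,\zeta)v(x)(\D\zeta)\eta_{v}(\D x)=\sE_{x}(c,v)$ by positive recurrence; hence $\rho^{v}=\sE_{x}(c,v)$, which is (i). The representation \cref{TErgoExisPoiss1B} then follows as in \cite[Lemma~3.6.9]{ABG-book} and \cite[Lemma~3.7.8]{ABG-book}: It\^o--Krylov up to $\uuptau_{r}\wedge\uptau_{R}$, $R\to\infty$ (justified by $V^{v}\in\sorder(\Lyap)$ and positive recurrence), then $r\downarrow0$ using continuity of $V^{v}$ and $V^{v}(0)=0$.

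Finally, for uniqueness let $(\hat{V}^{v},\hat{\rho}^{v})$ be a second solution in $\Sobl^{2,p}(\Rd)\cap\sorder(\Lyap)\times\RR$ with $\hat{V}^{v}(0)=0$. Running the ergodic argument of the previous paragraph on $\hat{V}^{v}$ forces $\hat{\rho}^{v}=\sE_{x}(c,v)=\rho^{v}$, and the same It\^o--Krylov computation shows that $\hat{V}^{v}$ also satisfies the representation \cref{TErgoExisPoiss1B}, whence $\hat{V}^{v}=V^{v}$; alternatively, $w\df V^{v}-\hat{V}^{v}$ solves $\sL_{v}w=0$ with $w\in\sorder(\Lyap)$ and $w(0)=0$, and one concludes $w\equiv0$ by passing $R\to\infty$ and $r\downarrow0$ in $\Exp_{x}^{v}[w(X_{\uuptau_{r}\wedge\uptau_{R}})]=w(x)$, exactly as in the uniqueness part of Theorem~\ref{NearmonotPoisso} (using the strong maximum principle \cite[Theorem~9.6]{GilTru} if preferred). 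The main obstacle is the $\sorder(\Lyap)$ control on $V^{v}$: it is simultaneously what makes the vanishing-discount limit well posed, what legitimizes every $R\to\infty$ passage in the It\^o--Krylov arguments, and what pins down both $\rho^{v}$ and $V^{v}$; the remaining steps are routine adaptations of machinery already developed for Theorems~\ref{ErgoContnuity}, \ref{NearmonotPoisso} and \ref{TErgoOptCont}.
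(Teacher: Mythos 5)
Your proposal is correct, but it is considerably more self-contained than the paper's proof, which disposes of the entire existence part --- including properties (i) and (ii) and the membership $V^{v}\in\sorder(\Lyap)$ --- by a single citation to \cite[Lemma~3.7.8]{ABG-book} and devotes all its effort to uniqueness. What you do differently is reconstruct that lemma from scratch: the vanishing-discount limit along the fixed policy $v$, the uniform $\Sob^{2,p}(\sB_R)$ bounds via \cite[Theorem~3.7.6]{ABG-book} (justified because \cref{Lyap1} makes every $v\in\Usm$ stable with $\inf_{v}\eta_v(\sB_R)>0$), the Banach--Alaoglu/diagonalization passage, and the $\sorder(\Lyap)$ growth bound obtained by the exit-time estimates $\Exp_x^v[\uuptau_r]\le\epsilon^{-1}\Lyap(x)$ exactly as in Theorem~\ref{TErgoOptCont}. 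This buys a proof that does not lean on the reference for the hardest estimate, at the cost of length. On uniqueness your argument coincides in substance with the paper's: apply It\^o--Krylov to the second solution up to $T\wedge\uptau_R$, use $\bar{V}^v\in\sorder(\Lyap)$ and \cite[Lemma~3.7.2(ii)]{ABG-book} to kill the boundary term, divide by $T$ to force $\bar\rho^v=\rho^v$, and then compare the two functions. One small remark: the paper is more conservative here, deriving only the one-sided inequality $\bar V^v(x)\ge\limsup_{r\downarrow0}\Exp_x^v[\int_0^{\uuptau_r}(c-\bar\rho^v)\,\D t]$ via Fatou and then invoking the strong maximum principle on $\sL_v(V^v-\bar V^v)=0$; your stronger claim that the second solution satisfies the representation \cref{TErgoExisPoiss1B} with equality is nevertheless legitimate, because the $\sorder(\Lyap)$ hypothesis together with $\Exp_x^v[\Lyap(X_{\uptau_R})\Ind_{\{\uuptau_r\ge\uptau_R\}}]\le\widehat C_0\Exp_x^v[\uuptau_r]+\Lyap(x)$ makes the boundary term vanish in both directions, so either of your two closing arguments (representation identity, or the martingale identity for $w=V^v-\hat V^v$) is sound.
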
 
\begin{proof} Existence of a solution pair $(V^v, \rho^{v})\in \Sobl^{2,p}(\Rd)\cap \sorder(\Lyap)\times \RR$ for any $p >1$ satisfying (i) and (ii) follows from \cite[Lemma~3.7.8]{ABG-book}\,. Now we want to prove the uniqueness of the solutions of \cref{TErgoExisPoiss1A}. Let $(\bar{V}^v, \bar{\rho}^{v})\in \Sobl^{2,p}(\Rd)\cap \sorder(\Lyap)\times \RR$ for any $p >1$ be any other solution pair of \cref{TErgoExisPoiss1A} with $\bar{V}^v(0) = 0$. By It$\hat{\rm o}$-Krylov formula, for $R>0$ we obtain
\begin{align}\label{TErgoExisPoiss1C}
\Exp_{x}^{v}\left[\bar{V}^v(X_{T\wedge\uptau_{R}})\right] - \bar{V}^v(x) &= \Exp_{x}^{v}\left[\int_{0}^{T\wedge\uptau_{R}} \sL_{v} \bar{V}^v(X_s) \D s\right]\nonumber\\
& = \Exp_{x}^{v}\left[\int_{0}^{T\wedge\uptau_{R}} \left(\bar{\rho}^{v} - c(X_s, v(X_s))\right)\D s \right]\,.  
\end{align}
Note that 
\begin{equation*}
\int_{0}^{T\wedge\uptau_{R}} \left(\bar{\rho}^{v} - c(X_s, v(X_s))\right)\D s = \int_{0}^{T\wedge\uptau_{R}} \bar{\rho}^{v}\D s - \int_{0}^{T\wedge\uptau_{R}}c(X_s, v(X_s))\D s
\end{equation*} Thus, letting $R\to \infty$ by monotone convergence theorem, we get
\begin{equation*}
\lim_{R\to\infty}\Exp_{x}^{v}\left[\int_{0}^{T\wedge\uptau_{R}} \left(\bar{\rho}^{v} - c(X_s, v(X_s))\right)\D s \right] = \Exp_{x}^{v}\left[\int_{0}^{T} \left(\bar{\rho}^{v} - c(X_s, v(X_s))\right)\D s \right]\,.
\end{equation*} Since $\bar{V}^v \in \sorder{(\Lyap)}$, in view of \cite[Lemma~3.7.2 (ii)]{ABG-book}, letting $R\to\infty$, we deduce that
\begin{align}\label{TErgoExisPoiss1D}
\Exp_{x}^{v}\left[\bar{V}^v(X_{T})\right] - \bar{V}^v(x) = \Exp_{x}^{v}\left[\int_{0}^{T} \left(\bar{\rho}^{v} - c(X_s, v(X_s))\right)\D s \right]\,.  
\end{align} Also, from \cite[Lemma~3.7.2 (ii)]{ABG-book}, we have
\begin{equation*}
\lim_{T\to\infty}\frac{\Exp_{x}^{v}\left[\bar{V}^v(X_{T})\right]}{T} = 0\,. 
\end{equation*}
Now, dividing both sides of \cref{TErgoExisPoiss1D} by $T$ and letting $T\to\infty$, we obtain 
\begin{align*}
\bar{\rho}^{v} = \limsup_{T\to \infty}\frac{1}{T}\Exp_{x}^{v}\left[\int_{0}^{T} \left(c(X_s, v(X_s))\right)\D s \right]\,.
\end{align*}This implies that $\bar{\rho}^{v} = \rho^{v}$\,. Using \cref{TErgoExisPoiss1A}, by It$\hat{\rm o}$-Krylov formula we have
\begin{align}\label{TErgoExisPoiss1E}
\bar{V}^v(x)\,=\, \Exp_x^{v}\left[\int_0^{\uuptau_{r}\wedge \uptau_{R}} \left(c(X_t, v(X_t)) - \bar{\rho}^{v}\right) \D{t} + \bar{V}^{v}\left(X_{\uuptau_{r}\wedge \uptau_{R}}\right)\right]\,.
\end{align} Also, by It$\hat{\rm o}$-Krylov formula and using \cref{Lyap1} it follows that
\begin{equation*}
\Exp_x^{v}\left[\Lyap\left(X_{\uptau_{R}}\right)\Ind_{\{\uuptau_{r}\geq \uptau_{R}\}}\right]\leq \widehat{C}_0 \Exp_x^{v}\left[\uuptau_{r}\right] + \Lyap(x)\quad \text{for all} \,\,\, r <|x|<R\,.
\end{equation*} Since $\bar{V}^v \in \sorder(\Lyap)$, form the above estimate, we get
\begin{equation*}
\liminf_{R\to\infty}\Exp_x^{v}\left[\bar{V}^{v}\left(X_{\uptau_{R}}\right)\Ind_{\{\uuptau_{r}\geq \uptau_{R}\}}\right] = 0\,.
\end{equation*}Thus, letting $R\to\infty$ by Fatou's lemma from \cref{TErgoExisPoiss1E}, it follows that
\begin{align*}
\bar{V}^v(x)&\,\geq\, \Exp_x^{v}\left[\int_0^{\uuptau_{r}} \left(c(X_t, v(X_t)) - \bar{\rho}^{v}\right) \D{t} +\bar{V}^{v}\left(X_{\uuptau_{r}}\right)\right]\nonumber\\
&\,\geq\, \Exp_x^{v}\left[\int_0^{\uuptau_{r}} \left(c(X_t, v(X_t)) - \bar{\rho}^{v}\right) \D{t}\right] +\inf_{\sB_r}\bar{V}^{v}\,.
\end{align*}Since $\bar{V}^{v}(0) =0$, letting $r\to 0$, we deduce that
\begin{align}\label{TErgoExisPoiss1F}
\bar{V}^v(x)\,\geq\, \limsup_{r\downarrow 0}\Exp_x^{v}\left[\int_0^{\uuptau_{r}} \left(c(X_t, v(X_t)) - \bar{\rho}^{v}\right) \D{t} \right]\,.
\end{align}
Since $\widehat{\rho}^v = \bar{\rho}^v$, from \cref{TErgoExisPoiss1B} and \cref{TErgoExisPoiss1F}, it is easy to see that $\widehat{V}^v - \bar{V}^v \leq 0$ in $\Rd$. Also, since $(V^v, \rho^v)$ and $(\bar{V}^v, \bar{\rho}^v)$ are two solution pairs of \cref{TErgoExisPoiss1A}, we have $\sL_{v}\left(\widehat{V}^v - \bar{V}^v\right)(x) = 0$ in $\Rd$. Hence, by strong maximum principle \cite[Theorem~9.6]{GilTru}, one has $V^v = \bar{V}^v$. This proves the uniqueness\,.
\end{proof}
%%%%%%%%%%%%%%%%%%%%%%%%%%%%%%%%%%%%%%%%%%%%%%%%%%%%%%%%%%%%%%%%%%%%%%%%%%%%%%%%%
Now we are ready to prove the robustness result, i.e., we want to show that $\sE_{x}(c, v_{n}^*)\to \sE^*(c)$ as $n\to \infty$, where $v_{n}^*$ is an optimal ergodic control of the approximated model (see, Theorem~\ref{TErgoOptApprox1})\,.
\begin{theorem}\label{ErgodLyapRobu1}
Suppose that Assumptions (A1) - (A5) and (A7) hold.  Then, we have
\begin{equation}\label{ErgodLyapRobu1A}
\lim_{n\to\infty} \inf_{x\in\Rd}\sE_{x}(c, v_{n}^*) = \sE^{*}(c)\,.
\end{equation}
\end{theorem}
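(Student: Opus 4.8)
The plan is to mirror the arguments of Theorems~\ref{TC1.4} and \ref{ErgodNearmonoRobu1}, but now using the Poisson‑equation characterization of Theorem~\ref{TErgoExisPoiss1} together with the continuity result Theorem~\ref{TErgoOptCont}. Write $\rho_{v_n^*}\df\inf_{x\in\Rd}\sE_x(c,v_n^*)$ and $\rho_n\df\sE^{n*}(c_n)$, so that $\bigl|\rho_{v_n^*}-\sE^*(c)\bigr|\le\bigl|\rho_{v_n^*}-\rho_n\bigr|+\bigl|\rho_n-\sE^*(c)\bigr|$; the last term vanishes by Theorem~\ref{TErgoOptCont}, so it suffices to prove $\bigl|\rho_{v_n^*}-\rho_n\bigr|\to0$. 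First I would invoke Theorem~\ref{TErgoExisPoiss1} for $v_n^*\in\Usm$ (which is stable, since (A7)(i) forces every stationary control to be stable) to obtain the unique pair $(V^{v_n^*},\rho^{v_n^*})\in\Sobl^{2,p}(\Rd)\cap\sorder(\Lyap)\times\RR$ with $V^{v_n^*}(0)=0$ solving $\rho^{v_n^*}=\sL_{v_n^*}V^{v_n^*}(x)+c(x,v_n^*(x))$ and satisfying $\rho^{v_n^*}=\sE_x(c,v_n^*)=\rho_{v_n^*}$. On the other side, by Theorem~\ref{TErgoOptApprox1} the optimal pair $(V^{n*},\rho_n)$ solves the approximating ergodic HJB equation \cref{TErgoOptApprox1A}, and since $v_n^*$ is a minimizing selector it also solves $\rho_n=\sL_{v_n^*}^n V^{n*}(x)+c_n(x,v_n^*(x))$.

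Next I would establish the uniform (in $n$) a priori bounds. Since $\|c\|_\infty,\|c_n\|_\infty\le M$ we have $|\rho^{v_n^*}|,|\rho_n|\le M$. Because (A7)(i)--(ii) hold with the \emph{same} pair $(\Lyap,h)$, every stationary control is stable for both the true generator and all approximating generators, $\inf_{v\in\Usm}\eta_v(\sB_R)>0$ uniformly (likewise for the approximating invariant measures), and the hitting‑time estimates used in the proof of Theorem~\ref{TErgoOptCont} (cf.\ \cref{TErgoOptCont1D,TErgoOptCont1E}) hold uniformly in $n$. Consequently the elliptic and oscillation estimates of \cite[Theorem~3.7.6, Lemma~3.7.8]{ABG-book} furnish constants depending only on the radius: $\|V^{v_n^*}\|_{\Sob^{2,p}(\sB_R)}\le\kappa(R)$ and $\|V^{n*}\|_{\Sob^{2,p}(\sB_R)}\le\kappa(R)$, together with a single $g\in\sorder(\Lyap)$ with $|V^{v_n^*}(x)|,|V^{n*}(x)|\le g(x)$ for all $n$. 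Using the compact embedding $\Sob^{2,p}(\sB_R)\hookrightarrow\cC^{1,\beta}(\bar\sB_R)$, the Banach--Alaoglu theorem, a diagonal argument and compactness of $\Usm$, I would extract a common subsequence along which $v_{n_k}^*\to\hat v$ in $\Usm$, $V^{v_{n_k}^*}\to\hat V$ and $V^{n_k*}\to\bar V$ weakly in $\Sobl^{2,p}(\Rd)$ and strongly in $\cC^{1,\beta}_{\mathrm{loc}}(\Rd)$, and $\rho^{v_{n_k}^*}\to\hat\rho$, $\rho_{n_k}\to\bar\rho$; the pointwise bound passes to the limit, so $\hat V,\bar V\in\sorder(\Lyap)$ and $\hat V(0)=\bar V(0)=0$.

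Passing to the limit in both Poisson‑type equations against test functions $\phi\in\cC_c^{\infty}(\Rd)$, exactly as in Theorem~\ref{TC1.4} — splitting the first‑order term as $b_{n_k}(x,v_{n_k}^*(x))\cdot\grad V^{n_k*}(x)=b_{n_k}(x,v_{n_k}^*(x))\cdot\grad\bigl(V^{n_k*}-\bar V\bigr)(x)+\bigl(b_{n_k}(x,v_{n_k}^*(x))-b(x,\hat v(x))\bigr)\cdot\grad\bar V(x)$, where the first summand converges to $0$ strongly on compacts and the second converges to $0$ weakly by the topology of $\Usm$ (and analogously for $V^{v_{n_k}^*}$), and using $a_{n_k}\to a$ a.e.\ and $c_{n_k}\to c$, $b_{n_k}\to b$ continuously in the control — yields that both $(\hat V,\hat\rho)$ and $(\bar V,\bar\rho)$ solve the true‑model Poisson equation $\rho=\sL_{\hat v}V(x)+c(x,\hat v(x))$ with $V(0)=0$ and $V\in\Sobl^{2,p}(\Rd)\cap\sorder(\Lyap)$. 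By the uniqueness part of Theorem~\ref{TErgoExisPoiss1} this solution is unique, so $\hat\rho=\bar\rho$ (and $\hat V=\bar V$). Hence $\rho_{v_{n_k}^*}=\rho^{v_{n_k}^*}\to\hat\rho$ while $\rho_{n_k}\to\bar\rho=\hat\rho$, so $\bigl|\rho_{v_{n_k}^*}-\rho_{n_k}\bigr|\to0$ along the subsequence. Since every subsequence of $(\rho_{v_n^*})_n$ admits a further subsequence along which this holds, $\bigl|\rho_{v_n^*}-\rho_n\bigr|\to0$, and combining with Theorem~\ref{TErgoOptCont} gives $\inf_{x\in\Rd}\sE_x(c,v_n^*)\to\sE^*(c)$.

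The main obstacle I expect is the uniformity of the a priori estimates: one must verify that the oscillation/elliptic bounds on balls and the $\sorder(\Lyap)$ control on the value functions $V^{v_n^*}$ and $V^{n*}$ hold with constants independent of $n$ and of the selectors $v_n^*$. This is exactly where the hypothesis that (A7) holds with a single common Lyapunov pair $(\Lyap,h)$ for the whole family is indispensable — without it the limits $\hat V,\bar V$ need not lie in $\sorder(\Lyap)$ and the uniqueness theorem could not be applied, which is the one place the argument is genuinely sensitive to the assumptions rather than routine.
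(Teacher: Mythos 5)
Your proposal is correct and follows essentially the same route as the paper's proof: the same triangle-inequality decomposition, the same use of Theorem~\ref{TErgoExisPoiss1} (existence/uniqueness of the Poisson pair in $\sorder(\Lyap)$) together with the uniform $\Sob^{2,p}(\sB_R)$ bounds from the common Lyapunov pair, passage to the limit in both the true-model Poisson equation and the approximating HJB along a common subsequence, and identification of both limits with $(V^{\tilde v^*},\rho^{\tilde v^*})$ via uniqueness. Your explicit subsequence-of-subsequences step and the emphasis on the uniformity of the $\sorder(\Lyap)$ bound are slightly more careful than the paper's write-up but do not constitute a different argument.
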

\begin{proof} We shall follow a similar proof program as that of Theorem \ref{TC1.4}, under the discounted setup. From Theorem~\ref{TErgoExisPoiss1}, we know that for each $n\in \NN$ there exists a unique pair $(V^{v_{n}^*}, \rho^{v_{n}^*})\in \Sobl^{2,p}(\Rd)\cap\sorder{(\Lyap)}\times \RR$, \, $1< p < \infty$, with $V^{v_{n}^*}(0) = 0$ satisfying
\begin{equation}\label{ErgodLyapRobu1B}
\rho^{v_{n}^*} = \left[\sL_{v_{n}^*}V^{v_{n}^*}(x) + c(x, {v_{n}^*}(x))\right]
\end{equation}
In view of \cref{Lyap1}, it is easy to see that, each $v\in\Usm$ is stable and $\inf_{v\in\Usm}\eta_v(\sB_R) > 0$ for any $R>0$ (see, \cite[Lemma~3.3.4]{ABG-book} and \cite[Lemma~3.2.4(b)]{ABG-book}). Thus, from \cite[Theorem~3.7.4]{ABG-book}, it follows that $\norm{V^{v_{n}^*}}_{\Sob^{2,p}(\sB_R)}\leq \hat{\kappa}_1$ where $\hat{\kappa}_1$ is a constant independent of $n\in\NN$\,. Therefore by the Banach-Alaoglu theorem and standard diagonalization argument (as in \cref{ETC1.3BC}), we deduce that there exists $\tilde{V}\in \Sobl^{2,p}(\Rd)$ such that along a sub-sequence
\begin{equation}\label{ErgodLyapRobu1C}
\begin{cases}
V^{v_{n_k}^*}\to & \tilde{V}\quad \text{in}\quad \Sobl^{2,p}(\Rd)\quad\text{(weakly)}\\
V^{v_{n_k}^*}\to & \tilde{V}\quad \text{in}\quad \cC^{1, \beta}_{loc}(\Rd)\quad\text{(strongly)}\,.
\end{cases}       
\end{equation}
Again, since $\rho^{v_{n}^*} \leq M$, along a further sub-sequence (without loss of generality denoting by same sequence), we have $\rho^{v_{n_k}^*}\to \tilde{\rho}$ as $k\to \infty$\,. Since $\Usm$ is compact along a further subsequence (without loss of generality denoting by same sequence) we have $v_{n_k}^* \to \tilde{v}^*$ as $k\to\infty$\,. Now, as in Theorem~\ref{TC1.4}, multiplying by test function and letting $k\to\infty$, from \cref{ErgodLyapRobu1B}, it is easy to see that $(\tilde{V}, \tilde{\rho})\in \Sobl^{2,p}(\Rd)\times \RR$, \, $1< p < \infty$, satisfies
\begin{equation}\label{ErgodLyapRobu1D}
\tilde{\rho} = \left[\sL_{\tilde{v}^*}\tilde{V}(x) + c(x, {\tilde{v}^*}(x))\right]
\end{equation} As we know that $V^{v_{n_k}^*}(0) = 0$ for all $k\in \NN$, we deduce that $\tilde{V}(0) = 0$\,. Arguing as in Theorem~\ref{TErgoOptCont} and using the estimate $\norm{V^{v_{n}^*}}_{\Sob^{2,p}(\sB_R)}\leq \hat{\kappa}_1$, we have
\begin{equation}\label{ErgodLyapRobu1E}
|\tilde{V}(x)| \,\leq\, M\sup_{v\in\Usm}\Exp_{x}^{v}\left[\int_{0}^{\uuptau_{r}} \left( c(X_t, v(X_t)) + 1\right)\D t + \sup_{n\in\NN}\sup_{\sB_r}|V^{v_n^*}|\right] \in \sorder{(\Lyap)}\,.
\end{equation}
Thus, by uniqueness of solution of \cref{ErgodLyapRobu1D} (see, Theorem~\ref{TErgoExisPoiss1}), we deduce that $(\tilde{V}, \tilde{\rho})\equiv (V^{\tilde{v}^*}, \rho^{\tilde{v}^*})$\,.

By the triangle inequality 
\begin{equation*}
|\rho^{v_{n_k}^*} - \rho| \leq |\rho^{v_{n_k}^*} - \rho_{n_k}| + |\rho_{n_k} - \rho|\,. 
\end{equation*} From Theorem~\ref{TErgoOptApprox1} we have $|\rho_{n_k} - \rho| \to 0$ as $k\to\infty $. Hence to complete the proof we have to show that $|\rho^{v_{n_k}^*} - \rho_{n_k}|\to 0$ as $k\to \infty$\,. Now, for any minimizing selector $v_{n_k}^*\in \Usm$ of \cref{TErgoOptApprox1A}, we have 
\begin{equation}\label{ErgodLyapRobu1F}
\rho_{n_k} = \left[\sL_{v_{n_k}^*}^{n_k}V^{n_k}(x) + c_{n_k}(x, v_{n_k}^*(x))\right]\,.
\end{equation} In view of the estimate \cref{ETErgoOptCont1A}, we obtain
\begin{equation}\label{ErgodLyapRobu1G}
\norm{V^{n_k}}_{\Sob^{2,p}(\sB_R)}\leq \hat{\kappa}
\end{equation} where $\hat{\kappa} > 0$ is a constant independent of $k\in\NN$\,. Hence, by the Banach-Alaoglu theorem and standard diagonalization argument (see \cref{ETC1.3BC}), we have there exists $\tilde{V}^*\in \Sobl^{2,p}(\Rd)$ such that along a sub-sequence
\begin{equation}\label{ErgodLyapRobu1H}
\begin{cases}
V^{n_k}\to & \tilde{V}^*\quad \text{in}\quad \Sobl^{2,p}(\Rd)\quad\text{(weakly)}\\
V^{n_k}\to & \tilde{V}^*\quad \text{in}\quad \cC^{1, \beta}_{loc}(\Rd) \quad\text{(strongly)}\,.
\end{cases}       
\end{equation}Since, $\rho_{n_k} \leq M$ long a further subsequence (denoting by same sequence without loss generality) $\rho_{n_k} \to \tilde{\rho}^*$. As we know $v_{n_k}^* \to \tilde{v}^*$ in $\Usm$, multiplying both sides of \cref{ErgodLyapRobu1F} by test functions and letting $k\to\infty$, it follows that $(\tilde{V}^*, \tilde{\rho}^*)\in \Sobl^{2,p}(\Rd)\times \RR$, \, $1< p < \infty$ satisfies
\begin{equation}\label{ErgodLyapRobu1I}
\tilde{\rho}^* = \left[\sL_{\tilde{v}^*}\tilde{V}^*(x) + c(x, \tilde{v}^*(x))\right]\,.
\end{equation} Arguing as in Theorem~\ref{TErgoOptCont}, one can show that $\tilde{V}^*\in \sorder{(\Lyap)}$. Hence, by uniqueness of solution of \cref{ErgodLyapRobu1F} (see, Theorem~\ref{TErgoExisPoiss1}) we deduce that $(\tilde{V}^*, \tilde{\rho}^*)\equiv (V^{\tilde{v}^*}, \rho^{\tilde{v}^*})$\,. Since both $\rho^{v_{n_k}^*}$ and $\rho_{n_k}$ converge to same limit $\rho^{\tilde{v}^*}$, it follows that  $|\rho^{v_{n_k}^*} - \rho_{n_k}|\to 0$ as $k\to \infty$\,. This completes the proof of the theorem.
\end{proof}
\section{Finite Horizon Cost}\label{Finitecost}
In this section we study the robustness problem under a finite horizon criterion\,. We will assume that $a, a_n,  b, b_n, c, c_n$ satisfy the following:
\begin{itemize}
\item[\hypertarget{FN1}{{(FN1)}}]
The functions $a, a_n,  b, b_n, c, c_n$ satisfy
\begin{equation*}
\sup_{(x,\zeta)\in \Rd\times \Act}\left[\abs{b(x,\zeta)} + \norm{a(x)} + \sum_{i}^{d} \norm{\frac{\partial{a}}{\partial x_i}(x)} + \abs{c(x, \zeta)}\right] \,\le\, \mathrm{K}\,.
\end{equation*}
and 
\begin{equation*}
\sup_{n\in \NN}\sup_{(x,\zeta)\in \Rd\times \Act}\left[\abs{b_n(x,\zeta)} + \norm{a_n(x)} + \sum_{i}^{d} \norm{\frac{\partial{a_n}}{\partial x_i}(x)} + \abs{c_n(x, \zeta)}\right] \,\le\, \mathrm{K}\,.
\end{equation*} for some positive constant $\mathrm{K}$\,. Furthermore, $H\in \Sob^{2,p,\mu}(\Rd)\cap \Lp^{\infty}(\Rd)$\,,\,\, $p\ge 2$\,. 
\end{itemize} 

From \cite[Theorem~3.3, p. 235]{BL84-book}, the finite horizon optimality equation (or, the HJB equation)
\begin{align}\label{EFinitecost1A}
&\frac{\partial \psi}{\partial t} + \inf_{\zeta\in \Act}\left[\sL_{\zeta}\psi + c(x, \zeta) \right] = 0 \\
& \psi(T,x) = H(x)
\end{align} admits a unique solution $\psi\in \Sob^{1,2,p,\mu}((0, T)\times\Rd)\cap \Lp^{\infty}((0, T)\times\Rd)$, for some $p\ge 2$ and $\mu > 0$. Now, by It\^{o}-Krylov formula (as in \cite[Theorem~3.5.2]{HP09-book}), there exist an optimal Markov policy, i.e., there exists $v^*\in \Um$ such that $\cJ_{T}^{v^*}(x, c) = \cJ_{T}^*(x, c) = \psi(0,x)$\,.

Similarly, for each $n\in\NN$ (for the approximating models) the optimality equation
\begin{align}\label{EFinitecost1B}
&\frac{\partial \psi_n}{\partial t} + \inf_{\zeta\in \Act}\left[\sL_{\zeta}^n\psi_n + c_n(x, \zeta) \right] = 0 \\
& \psi_n(T,x) = H(x)
\end{align} admits a unique solution $\psi_n\in \Sob^{1,2,p,\mu}((0, T)\times\Rd)\cap \Lp^{\infty}((0, T)\times\Rd)$\,,\,\, $p\ge 2$\,. Moreover, by the It\^{o}-Krylov formula (as in \cite[Theorem~3.5.2]{HP09-book}), there exists $v_n^*\in \Um$ such that $\cJ_{T,n}^{v_n^*}(x, c_n) = \cJ_{T,n}^*(x, c_n) = \psi_n(0,x)$\,.

The following theorem shows that as the approximating model approaches the true model the optimal value of the approximating model converge to the optimal value of the true model. 
\begin{theorem}\label{FinitecostThm1}
Suppose Assumptions (A1), (A3) and (FN1) hold. Then 
\begin{equation*}
\lim_{n\to \infty} \cJ_{T,n}^*(x, c_n) = \cJ_{T}^*(x, c)\,.
\end{equation*}
\end{theorem}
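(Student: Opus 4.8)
The plan is to replicate the argument of Theorem~\ref{TC1.3}, with the elliptic HJB equation replaced by the parabolic equation \cref{EFinitecost1A} and with the base space $\Sobl^{2,p}(\Rd)$ replaced by the parabolic Sobolev space $\Sob^{1,2,p,\mu}((0,T)\times\Rd)$. First I would fix, for each $n$, a minimizing selector $v_n^*\in\Um$ of \cref{EFinitecost1B}, so that $\psi_n$ solves the \emph{linear} parabolic equation $\partial_t\psi_n+\trace(a_n\grad^2\psi_n)+b_n(x,v_n^*(t,x))\cdot\grad\psi_n+c_n(x,v_n^*(t,x))=0$ with terminal datum $H$. From the stochastic representation $\psi_n(0,x)=\cJ_{T,n}^*(x,c_n)$ (and its analogue at intermediate times), together with $\norm{c_n}_\infty\le\mathrm K$ and $H\in\Lp^\infty(\Rd)$, one gets $\norm{\psi_n}_{\Lp^\infty((0,T)\times\Rd)}\le\norm{H}_\infty+T\mathrm K$ uniformly in $n$. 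Feeding this $\Lp^\infty$ bound, the uniform ellipticity (A3), the uniform bounds of (FN1) on $a_n,\partial_{x_i}a_n,b_n,c_n$, and the bound $\norm{H}_{\Sob^{2,p,\mu}}\le\mathrm K$ into the a priori estimate of \cite[Theorem~3.3, p.~235]{BL84-book} (localized, or used directly in the weighted norm), one obtains $\norm{\psi_n}_{\Sob^{1,2,p}((0,T)\times\sB_R)}\le\kappa(R,p)$ for every $R>0$ and $p\ge2$, with $\kappa(R,p)$ independent of $n$.

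Next, since $\Sob^{1,2,p}((0,T)\times\sB_R)$ is reflexive and, for $p$ large enough, compactly embedded in the space of functions that are H\"older in $(t,x)$ with spatial gradient continuous up to $[0,T]\times\bar\sB_R$, a diagonalization argument together with the Banach--Alaoglu theorem produces a subsequence $\{\psi_{n_k}\}$ and a limit $\psi^*\in\Sobl^{1,2,p}((0,T)\times\Rd)\cap\cC_b$ with $\psi_{n_k}\to\psi^*$ weakly in $\Sobl^{1,2,p}$, and $\psi_{n_k}\to\psi^*$, $\grad\psi_{n_k}\to\grad\psi^*$ strongly in $\cC^0_{loc}([0,T]\times\Rd)$. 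In particular $\psi^*(T,\cdot)=H$ and $\psi_{n_k}(0,x)\to\psi^*(0,x)$ for all $x$. Exactly as in \cref{ETC1.3C1A,ETC1.3C}, the continuous convergence $b_n(x,\zeta_n)\to b(x,\zeta)$, $c_n(x,\zeta_n)\to c(x,\zeta)$ of (A5) combined with the local uniform convergence of $\grad\psi_{n_k}$ gives $\min_{\zeta\in\Act}[b_{n_k}(x,\zeta)\cdot\grad\psi_{n_k}+c_{n_k}(x,\zeta)]\to\min_{\zeta\in\Act}[b(x,\zeta)\cdot\grad\psi^*+c(x,\zeta)]$ uniformly on compacta. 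Multiplying \cref{EFinitecost1B} by a test function $\phi\in\cC^\infty_c((0,T)\times\Rd)$, integrating, and letting $k\to\infty$ — the second-order term converges because $\grad^2\psi_{n_k}\rightharpoonup\grad^2\psi^*$ weakly in $\Lp^p_{loc}$ while $a_{n_k}\phi\to a\phi$ strongly in $\Lp^{p'}$ by dominated convergence (using $a_{n_k}\to a$ a.e.\ from (A5) and the bound of (FN1)), and $\partial_t\psi_{n_k}\rightharpoonup\partial_t\psi^*$ — we deduce that $\psi^*$ satisfies $\partial_t\psi^*+\inf_{\zeta\in\Act}[\sL_\zeta\psi^*+c(x,\zeta)]=0$ a.e.\ with $\psi^*(T,\cdot)=H$. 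Moreover $\psi^*$ inherits the uniform $\Lp^\infty$ bound of the first step and, via the same estimate, lies in $\Sob^{1,2,p,\mu}((0,T)\times\Rd)$.

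By the uniqueness part of \cite[Theorem~3.3, p.~235]{BL84-book}, the problem \cref{EFinitecost1A} has a unique solution in $\Sob^{1,2,p,\mu}((0,T)\times\Rd)\cap\Lp^\infty((0,T)\times\Rd)$, so $\psi^*=\psi$; since the limit is independent of the subsequence, the whole sequence converges, giving $\cJ_{T,n}^*(x,c_n)=\psi_n(0,x)\to\psi(0,x)=\cJ_T^*(x,c)$ for every $x\in\Rd$. I expect the main obstacle to be purely technical rather than conceptual: securing the uniform-in-$n$ parabolic estimate \emph{up to} the terminal time $t=T$ (which is where $H\in\Sob^{2,p,\mu}$ is used) and invoking the correct parabolic compact embedding that yields strong local convergence of $\grad\psi_{n_k}$; as in Theorem~\ref{TC1.3}, the merely a.e.\ convergence $a_n\to a$ forces one to pass to the limit only in the weak formulation, pairing the weakly convergent Hessians against the strongly convergent $a_{n_k}\phi$.
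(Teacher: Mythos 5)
Your proposal is correct and follows essentially the same route as the paper: a uniform $L^\infty$ bound from the stochastic representation, the parabolic a priori estimate of \cite[p.~234--235]{BL84-book} to get a uniform-in-$n$ bound in $\Sob^{1,2,p,\mu}$, Banach--Alaoglu plus compact embedding to extract a weakly/strongly convergent subsequence, passage to the limit in the weak (test-function) formulation of \cref{EFinitecost1B}, and finally the uniqueness of solutions to \cref{EFinitecost1A} to identify the limit with $\psi$. The only cosmetic difference is that the paper works directly in the global weighted norm $\Sob^{1,2,p,\mu}((0,T)\times\Rd)$ (with strong convergence in $\Sob^{0,1,p,\mu}$) rather than localizing to $(0,T)\times\sB_R$, and your explicit subsequence-independence remark at the end makes a step the paper leaves implicit.
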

\begin{proof} 
For any minimizing selector $v_n^*$ of \cref{EFinitecost1B}, we have  
\begin{align}\label{EFinitecost1C}
&\frac{\partial \psi_n}{\partial t} + \sL_{v_n^*}^n\psi_n + c_n(x, v_n^*(t,x)) = 0 \\
& \psi_n(T,x) = H(x)
\end{align}
By the It\^{o}-Krylov formula, it follows that
\begin{align}\label{EFinitecost1DA}
\psi_{n}(t,x) = \Exp_x^{v_n^*}\left[\int_t^{T} c_n(X_s^n, v_n^*(s, X_s^*)) \D{s} + H(X_T^*)\right]
\end{align}This implies that 
\begin{equation}\label{EFinitecost1D}
\norm{\psi_n}_{\infty} \leq T\norm{c_n}_{\infty} + \norm{H}_{\infty}\,.
\end{equation} Rewriting \cref{EFinitecost1C}, it follows that 
\begin{align*}
&\frac{\partial \psi_n}{\partial t} + \sL_{v_n^*}^n\psi_n + \lambda_0 \psi_n = \lambda_0 \psi_n - c_n(x, v_n^*(t,x))  \nonumber\\
& \psi_n(T,x) = H(x)\,,
\end{align*} for some fixed $\lambda_0 >0$\,. Thus, by parabolic pde estimate \cite[eq. (3.8), p. 234]{BL84-book}, we deduce that
\begin{equation}\label{EFinitecost1E}
\norm{\psi_n}_{\Sob^{1,2,p,\mu}} \leq \hat{\kappa}_1 \norm{\lambda_0 \psi_n - c_n(x, v_n^*(t,x))}_{\Lp^{p,\mu}}\,.
\end{equation} Thus, from \cref{EFinitecost1D} and \cref{EFinitecost1E}, we obtain $\norm{\psi_n}_{\Sob^{1,2,p,\mu}} \leq \hat{\kappa}_2$ for some positive constant $\hat{\kappa}_2$ (independent of $n$)\,. Since $\Sob^{1,2,p,\mu}((0, T)\times\Rd)$ is a reflexive Banach space, as a corollary of the Banach-Alaoglu theorem, there exists $\bar{\psi}\in\Sob^{1,2,p,\mu}((0, T)\times\Rd)$ such that along a subsequence (without loss of generality denoting by same sequence) 
\begin{equation}\label{EFinitecost1F}
\begin{cases}
\psi_n \to & \bar{\psi}\quad \text{in}\quad \Sob^{1,2,p,\mu}((0, T)\times\Rd)\quad\text{(weakly)}\\
\psi_n \to & \bar{\psi}\quad \text{in}\quad \Sob^{0,1,p,\mu}((0, T)\times\Rd)\quad \text{(strongly)}\,.
\end{cases}       
\end{equation} Now, as in our earlier analysis for the different cost criteria considered, multiplying both sides of the \cref{EFinitecost1B} by test function $\phi\in\cC_c^{\infty}((0, T)\times \Rd)$ and integrating, we get
\begin{align}\label{EFinitecost1G}
\int_{0}^{T}\int_{\Rd}\frac{\partial \psi_n}{\partial t}\phi(t,x)\D t \D x + \int_{0}^{T}\int_{\Rd}\inf_{\zeta\in \Act}\left[\sL_{\zeta}^n\psi_n + c_n(x, \zeta) \right]\phi(t,x)\D t \D x = 0\,.
\end{align}
Thus, in view of \cref{EFinitecost1F}, letting $n\to \infty$, from \cref{EFinitecost1G} it follows that (arguing as in \cref{ETC1.3C1A} - \cref{ETC1.3D})
\begin{align*}
\int_{0}^{T}\int_{\Rd}\frac{\partial \bar{\psi}}{\partial t}\phi(t,x)\D t \D x + \int_{0}^{T}\int_{\Rd}\inf_{\zeta\in \Act}\left[\sL_{\zeta}\bar{\psi} + c(x, \zeta) \right]\phi(t,x)\D t \D x = 0\,.
\end{align*}
Since $\phi\in\cC_c^{\infty}((0, T)\times \Rd)$ is arbitrary, from the above equation we deduce that $\bar{\psi}\in\Sob^{1,2,p,\mu}((0, T)\times\Rd)$ satisfies
\begin{align}\label{EFinitecost1H}
&\frac{\partial \bar{\psi}}{\partial t} + \inf_{\zeta\in \Act}\left[\sL_{\zeta}\bar{\psi} + c(x, \zeta) \right] = 0 \nonumber\\
& \bar{\psi}(T,x) = H(x)\,.
\end{align} Since $\psi $ is the unique solution of \cref{EFinitecost1H}, we deduce that $\bar{\psi}(0,x) = \psi(0,x) = \cJ_{T}^*(x, c)$. This completes the proof.
\end{proof}
In the following theorem, we prove the robustness result for the finite horizon cost criterion.
\begin{theorem}\label{FinitecostThm2}
Suppose Assumptions (A1), (A3) and (FN1) hold. Then for any optimal control $v_n^*$ of the approximating models we have
\begin{equation*}
\lim_{n\to \infty} \cJ_{T}^{v_n^*}(x, c) = \cJ_{T}^*(x, c)\,.
\end{equation*}
\end{theorem}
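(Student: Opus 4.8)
The plan is to follow the strategy of the proof of Theorem~\ref{TC1.4}, using the continuity statement Theorem~\ref{FinitecostThm1} as an intermediate step. By the triangle inequality,
\begin{equation*}
\bigl\lvert \cJ_{T}^{v_n^*}(x, c) - \cJ_{T}^*(x, c)\bigr\rvert \,\le\, \bigl\lvert \cJ_{T}^{v_n^*}(x, c) - \cJ_{T,n}^*(x, c_n)\bigr\rvert + \bigl\lvert \cJ_{T,n}^*(x, c_n) - \cJ_{T}^*(x, c)\bigr\rvert\,,
\end{equation*}
and the last term tends to $0$ by Theorem~\ref{FinitecostThm1}, so it suffices to prove $\cJ_{T}^{v_n^*}(x, c) - \cJ_{T,n}^*(x, c_n)\to 0$. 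Recall $\cJ_{T,n}^*(x, c_n) = \psi_n(0,x)$, where $\psi_n$ solves the approximating HJB equation~\cref{EFinitecost1B}. Freezing the Markov policy $v_n^*$ in the \emph{true} model, I would introduce the linear terminal-value problem
\begin{align*}
&\frac{\partial \psi_n^*}{\partial t} + \sL_{v_n^*}\psi_n^* + c(x, v_n^*(t,x)) = 0\,,\\
&\psi_n^*(T,x) = H(x)\,,
\end{align*}
which, by \cite[Theorem~3.3, p.~235]{BL84-book}, admits a unique solution $\psi_n^*\in \Sob^{1,2,p,\mu}((0,T)\times\Rd)\cap\Lp^{\infty}((0,T)\times\Rd)$; the It$\hat{\rm o}$-Krylov formula, exactly as in~\cref{EFinitecost1DA}, gives $\psi_n^*(0,x) = \cJ_{T}^{v_n^*}(x,c)$. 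Thus the goal reduces to showing $\psi_n^*(0,x) - \psi_n(0,x)\to 0$.

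Next I would establish uniform bounds. Since $\norm{\psi_n^*}_\infty \le T\norm{c}_\infty + \norm{H}_\infty \le T\mathrm{K} + \norm{H}_\infty$, rewriting the equation for $\psi_n^*$ with an added $\lambda_0\psi_n^*$ on both sides (for a fixed $\lambda_0>0$) and invoking the parabolic estimate \cite[eq.~(3.8), p.~234]{BL84-book} as in~\cref{EFinitecost1E} yields $\norm{\psi_n^*}_{\Sob^{1,2,p,\mu}}\le\hat{\kappa}$ with $\hat{\kappa}$ independent of $n$; the analogous bound for $\psi_n$ is already recorded in the proof of Theorem~\ref{FinitecostThm1}. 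By reflexivity of $\Sob^{1,2,p,\mu}((0,T)\times\Rd)$, the Banach--Alaoglu theorem, and a relaxed-control precompactness argument for the sequence $\{v_n^*\}$ of $\pV$-valued measurable maps on $(0,T)\times\Rd$ (in the topology analogous to that of $\Usm$, cf.~\cite[Section~2.4]{ABG-book}), I can pass to a common subsequence along which $\psi_n^*\to\bar{\psi}^*$ and $\psi_n\to\tilde{\psi}$ weakly in $\Sob^{1,2,p,\mu}$ and strongly in $\Sob^{0,1,p,\mu}$, and $v_n^*\to\hat{v}^*$ in $\Um$; the $\Sob^{1,2,p,\mu}$ bounds additionally give, via the parabolic Sobolev embedding, local equicontinuity on $[0,T]\times\Rd$, so these convergences hold locally uniformly and may be evaluated at $t=0$ (and the terminal conditions are preserved at $t=T$).

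Finally I would pass to the limit in both equations tested against $\phi\in\cC_c^{\infty}((0,T)\times\Rd)$. For $\psi_n^*$, splitting $b(x,v_n^*(t,x))\cdot\grad\psi_n^* = b(x,v_n^*(t,x))\cdot\grad(\psi_n^*-\bar{\psi}^*) + \bigl(b(x,v_n^*(t,x))-b(x,\hat{v}^*(t,x))\bigr)\cdot\grad\bar{\psi}^*$ — the first term vanishing in the limit by the strong $\Sob^{0,1,p,\mu}$ convergence and boundedness of $b$, the second by the topology of $\Um$ — and using that $a$ is fixed and $c(x,v_n^*(t,x))\to c(x,\hat{v}^*(t,x))$ weakly, I get that $\bar{\psi}^*$ solves the linear problem $\partial_t\bar{\psi}^* + \sL_{\hat{v}^*}\bar{\psi}^* + c(x,\hat{v}^*(t,x)) = 0$ with $\bar{\psi}^*(T,\cdot)=H$. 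For $\psi_n$, since $v_n^*$ is a minimizing selector of~\cref{EFinitecost1B} it also satisfies $\partial_t\psi_n + \sL_{v_n^*}^n\psi_n + c_n(x,v_n^*(t,x)) = 0$; passing to the limit in this form, now also using $a_n\to a$ a.e.\ with the uniform bound from (FN1) and the continuous convergence $b_n\to b$, $c_n\to c$ in the control variable (exactly as in~\cref{ETC1.3C1A} - \cref{ETC1.3D}), shows $\tilde{\psi}$ solves the \emph{same} linear problem with coefficient $\hat{v}^*$. The main obstacle is precisely this matching step: extracting a single subsequence along which $v_n^*$, the approximate value functions $\psi_n$, and the true-model-under-$v_n^*$ values $\psi_n^*$ all converge, and passing to the limit carefully enough that the two limiting equations coincide. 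Once they do, uniqueness of the bounded $\Sob^{1,2,p,\mu}$ solution of the linear terminal-value problem (equivalently, its It$\hat{\rm o}$-Krylov representation $\Exp_x^{\hat{v}^*}\bigl[\int_0^T c(X_s,\hat{v}^*(s,X_s))\D s + H(X_T)\bigr]$) forces $\bar{\psi}^*\equiv\tilde{\psi}$. Since $\psi_n(0,x)\to\cJ_{T}^*(x,c)$ by Theorem~\ref{FinitecostThm1} we also get $\tilde{\psi}(0,x)=\cJ_{T}^*(x,c)$, hence
\begin{equation*}
\cJ_{T}^{v_{n_k}^*}(x,c) = \psi_{n_k}^*(0,x)\longrightarrow \bar{\psi}^*(0,x) = \tilde{\psi}(0,x) = \cJ_{T}^*(x,c)\,;
\end{equation*}
as every subsequence of $\{\cJ_{T}^{v_n^*}(x,c)\}$ has a further subsequence converging to $\cJ_{T}^*(x,c)$, the full sequence converges, completing the proof.
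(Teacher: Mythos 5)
Your proposal follows essentially the same route as the paper's proof: the same triangle-inequality decomposition (noting $\cJ_{T,n}^{v_n^*}(x,c_n)=\cJ_{T,n}^*(x,c_n)$), the same auxiliary linear terminal-value problem for the frozen policy $v_n^*$ in the true model with its It$\hat{\rm o}$--Krylov representation, the same uniform $\Sob^{1,2,p,\mu}$ bounds and Banach--Alaoglu/compactness-of-$\Um$ extraction, and the same matching of the two limiting linear equations under the limit policy to conclude. The argument is correct and, if anything, spells out the convergence-at-$t=0$ and subsequence-of-subsequences details more explicitly than the paper does.
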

\begin{proof}
By the triangle inequality we have $$|\cJ_{T}^{v_n^*}(x, c) - \cJ_{T}^*(x, c)| \leq |\cJ_{T}^{v_n^*}(x, c) - \cJ_{T, n}^{v_n^*}(x, c_n)| + |\cJ_{T,n}^{v_n^*}(x, c_n) - \cJ_{T}^*(x, c)|\,.$$ From Theorem~\ref{FinitecostThm1}, it is known that $|\cJ_{T,n}^{v_n^*}(x, c_n) - \cJ_{T}^*(x, c)| \to 0$ as $n\to \infty$\,. Next, we show that $|\cJ_{T}^{v_n^*}(x, c) - \cJ_{T, n}^{v_n^*}(x, c_n)|\to 0$ as $n\to \infty$\,. 

Since the space $\Um$ is compact (with topology defined as in \cite[Definition~2.2]{YukselPradhan}), along a sub-sequence $v_n^*\to \bar{v}$. From \cite[Theorem~3.3, p. 235]{BL84-book}, we have that for each $n\in\NN$ there exists a unique solution $\bar{\psi}_n\in\Sob^{1,2,p,\mu}((0, T)\times\Rd)\cap \Lp^{\infty}((0, T)\times\Rd)$\,,\,\, $p\ge 2$, to the following Poisson equation
\begin{align}\label{EFinitecostThm2A}
&\frac{\partial \bar{\psi}_n}{\partial t} + \left[\sL_{v_n^*}\bar{\psi}_n + c(x, v_n^*(t,x)) \right] = 0 \nonumber\\
& \psi_n(T,x) = H(x)\,.
\end{align} By It\^{o}-Krylov formula, from \cref{EFinitecostThm2A} it follows that
\begin{align}\label{EFinitecostThm2B}
\bar{\psi}_{n}(t,x) = \Exp_x^{v_n^*}\left[\int_t^{T} c(X_s, v_n^*(s, X_s)) \D{s} + H(X_T)\right]
\end{align} 
This gives us 
\begin{equation}\label{EFinitecostThm2C}
\norm{\bar{\psi}_{n}}_{\infty} \leq T\norm{c}_{\infty} + \norm{H}_{\infty}\,.
\end{equation} Arguing as in Theorem~\ref{FinitecostThm1}, letting $n\to \infty$  from \cref{EFinitecostThm2A}, we deduce that there exists $\hat{\psi}\in\Sob^{1,2,p,\mu}((0, T)\times\Rd)\cap \Lp^{\infty}((0, T)\times\Rd)$\,,\,\, $p\ge 2$, satisfying
\begin{align}\label{EFinitecostThm2D}
&\frac{\partial \hat{\psi}}{\partial t} + \left[\sL_{\bar{v}}\hat{\psi} + c(x, \bar{v}(t,x)) \right] = 0 \nonumber\\
& \hat{\psi}(T,x) = H(x)\,.
\end{align}
Now using \cref{EFinitecostThm2D}, by It\^{o}-Krylov formula we deduce that
\begin{align}\label{EFinitecostThm2E}
\hat{\psi}(t,x) = \Exp_x^{\bar{v}}\left[\int_t^{T} c(X_s, \bar{v}(s, X_s)) \D{s} + H(X_T)\right]\,.
\end{align}

Moreover, we have 
\begin{align}\label{EFinitecostThm2F}
&\frac{\partial \psi_n}{\partial t} + \sL_{v_n^*}^n\psi_n + c_n(x, v_n^*(t,x)) = 0 \\
& \psi_n(T,x) = H(x)\,.
\end{align} Letting $n\to \infty$, as in Theorem~\ref{FinitecostThm1}, we have there exists $\tilde{\psi}\in\Sob^{1,2,p,\mu}((0, T)\times\Rd)\cap \Lp^{\infty}((0, T)\times\Rd)$\,,\,\, $p\ge 2$, satisfying
\begin{align}\label{EFinitecostThm2G}
&\frac{\partial \tilde{\psi}}{\partial t} + \left[\sL_{\bar{v}}\tilde{\psi} + c(x, \bar{v}(t,x)) \right] = 0 \nonumber\\
& \tilde{\psi}(T,x) = H(x)\,.
\end{align}
By It\^{o}-Krylov formula, from \cref{EFinitecostThm2G}, we obtain
\begin{align}\label{EFinitecostThm2H}
\tilde{\psi}(t,x) = \Exp_x^{\bar{v}}\left[\int_t^{T} c(X_s, \bar{v}(s, X_s)) \D{s} + H(X_T)\right]\,.
\end{align}  From \cref{EFinitecostThm2E}and \cref{EFinitecostThm2H}, we deduce that $\cJ_{T}^{v_n^*}(x, c) = \bar{\psi}_{n}(0,x)$ and $\cJ_{T, n}^{v_n^*}(x, c_n) = \psi_n(0,x)$ converge to the same limit\,. This completes the proof. 
\end{proof}

\section{Control up to an Exit Time}\label{exitTimeSection}
Before we conclude the paper, let us also briefly note that if one consider an optimal control up to an exit time with the cost given as:
\begin{itemize}
\item[•]\textit{(in true model:)} for each $U\in\Uadm$ the associated cost is given as
\begin{equation*}
\hat{\cJ}_{e}^{U}(x) \,\df \, \Exp_x^{U} \left[\int_0^{\tau(O)} e^{-\int_{0}^{t}\delta(X_s, U_s) \D s} c(X_t, U_t) \D t + e^{-\int_{0}^{\tau(O)}\delta(X_s, U_s) \D s}h(X_{\tau(O)})\right],\quad x\in\Rd\,,
\end{equation*}
\item[•]\textit{(in approximated models:)} for each $n\in \NN$ and $U\in\Uadm$ the associated cost is given as
\begin{equation*}
\hat{\cJ}_{e,n}^{U}(x) \,\df \, \Exp_x^{U} \left[\int_0^{\tau(O)} e^{-\int_{0}^{t}\delta(X_s, U_s) \D s} c_n(X_t, U_t) \D t + e^{-\int_{0}^{\tau(O)}\delta(X_s, U_s) \D s}h(X_{\tau(O)})\right],\quad x\in\Rd\,,
\end{equation*}
\end{itemize} where $O\subset \Rd$ is a smooth bounded domain, $\tau(O) \,\df\,  \inf\{t \geq 0: X_t\notin O\}$, $\delta(\cdot, \cdot): \bar{O}\times\Act\to [0, \infty)$ is the discount function and $h:\bar{O}\to \RR_+$ is the terminal cost function. In the true model the optimal value is defined as $\hat{\cJ}_{e}^{*}(x)=\inf_{U\in \Uadm}\hat{\cJ}_{e}^{U}(x)$, and in the approximated model the optimal value is defined as $\hat{\cJ}_{e,n}^{*}(x)=\inf_{U\in \Uadm}\hat{\cJ}_{e,n}^{U}(x)$\,. We assume that $\delta\in \cC(\bar{O}\times \Act)$, $h\in\cC(\bar{O})$. As in \cite{RZ21}, \cite[p.229]{B05Survey} the analysis leads to the following HJB equation. 
\begin{align*}
\min_{\zeta \in\Act}\left[\sL_{\zeta}\phi(x) - \delta(x, \zeta) \phi(x) + c(x, \zeta)\right] =  0\,,\quad \text{for all\ }\,\, x\in O\,,\quad\text{with}\quad
 \phi = h\,\,\, \text{on}\,\,\, \partial{O}\,.
\end{align*} By similar argument as in \cite[Theorem~3.5.3]{ABG-book}, \cite[Theorem~3.5.6]{ABG-book} we have that $\hat{\cJ}_{e}^{*}$, $\hat{\cJ}_{e,n}^{*}$ are unique solutions to their respective HJB equations. Existence follows by utilizing the Leray-Schauder fixed point theorem as in \cite[Theorem~3.5.3]{ABG-book} and uniqueness follows by It$\hat{o}$-Krylov formula as in \cite[Theorem~3.5.6]{ABG-book}\,. Using standard elliptic PDE estimates (on bounded domain $O$) and closely mimicking the arguments as in Theorem~\ref{TC1.3}, we have the following continuity result
\begin{theorem}\label{TExi1.1}
Suppose Assumptions (A1)-(A5) hold. Then 
\begin{equation*}
\lim_{n\to\infty} \hat{\cJ}_{e,n}^{*}(x) = \hat{\cJ}_{e}^{*}(x) \quad\text{for all}\,\, x\in \bar{O}\,.
\end{equation*}
\end{theorem}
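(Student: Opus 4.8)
The plan is to transcribe the proof of \cref{TC1.3} to the fixed bounded smooth domain $O$, replacing the whole-space elliptic estimates by interior estimates on $O$ together with a barrier argument at $\partial O$. For each $n$ fix a measurable minimizing selector $v_n^*\in\Usm$ of the approximate HJB equation, so that $\phi_n\df\hat{\cJ}_{e,n}^*$ solves
\begin{equation*}
\sL_{v_n^*}^n\phi_n(x)-\delta(x,v_n^*(x))\,\phi_n(x)+c_n(x,v_n^*(x))=0\ \ \text{in }O,\qquad \phi_n=h\ \text{on }\partial O .
\end{equation*}
A uniform sup-norm bound is the first step: since $\bar O$ is compact, $\upsigma_n$ is uniformly nondegenerate and $(b_n,\upsigma_n)$ are uniformly bounded on $\bar O$, one has $\sup_{x\in O}\sup_n\Exp_x^{v_n^*}[\uptau(O)]\le C(O,d)$; together with $\delta\ge0$, $\norm{c_n}_\infty\le M$ and the fixed $h\in\cC(\bar O)$, the stochastic representation of $\phi_n$ gives $\norm{\phi_n}_{\Lp^\infty(O)}\le C(O,d)M+\norm{h}_\infty=:M_1$, independent of $n$.

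Next I would extract a convergent subsequence. Applying the interior $\Sob^{2,p}$ estimate of \cite[Theorem~9.11]{GilTru} to the displayed linear equation on subdomains $O'\Subset O$, using uniform ellipticity (A3), the uniform bounds on $b_n,c_n$ and the bound $\norm{\phi_n}_{\Lp^\infty(O)}\le M_1$, yields $\norm{\phi_n}_{\Sob^{2,p}(O')}\le\kappa(O')$ for any fixed $p\ge d+1$, with $\kappa(O')$ independent of $n$. In addition, since $O$ is smooth (hence satisfies a uniform exterior sphere condition) and all $\phi_n$ carry the same boundary datum $h$, a standard barrier comparison produces a modulus of continuity for $\{\phi_n\}$ up to $\partial O$ that is independent of $n$. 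Combining the local $\Sob^{2,p}$ bounds, the compact embedding $\Sob^{2,p}(O')\hookrightarrow\hookrightarrow\cC^{1,\beta}(\bar{O'})$ for $\beta<1-\tfrac dp$, the $\Lp^\infty$ bound, the boundary modulus of continuity, and a diagonal/Arzel\`a--Ascoli argument, I obtain $\phi^*\in\Sobl^{2,p}(O)\cap\cC(\bar O)$ and a subsequence with $\phi_{n_k}\rightharpoonup\phi^*$ weakly in $\Sobl^{2,p}(O)$, $\phi_{n_k}\to\phi^*$ strongly in $\cC^{1,\beta}_{loc}(O)$, and $\phi_{n_k}\to\phi^*$ uniformly on $\bar O$; in particular $\phi^*=h$ on $\partial O$.

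It remains to identify $\phi^*$ with $\hat{\cJ}_e^*$. Writing the approximate HJB as $\trace(a_{n_k}\grad^2\phi_{n_k})+\min_{\zeta\in\Act}\{b_{n_k}(x,\zeta)\cdot\grad\phi_{n_k}-\delta(x,\zeta)\phi_{n_k}+c_{n_k}(x,\zeta)\}=0$, multiplying by $\varphi\in\cC_c^\infty(O)$ with support in a compact $K\subset O$ and integrating, I pass to the limit $k\to\infty$ as in the step \cref{ETC1.3C1A}--\cref{ETC1.3D} of the proof of \cref{TC1.3}: the second-order term converges because $a_{n_k}\varphi\to a\varphi$ strongly in $\Lp^{p'}(O)$ (a.e.\ convergence of $\upsigma_n$, boundedness on $K$, dominated convergence) while $\grad^2\phi_{n_k}\rightharpoonup\grad^2\phi^*$ weakly in $\Lp^p_{loc}(O)$, and the $\min$-term converges uniformly on $K$ because $b_{n_k}(x,\cdot)\to b(x,\cdot)$ and $c_{n_k}(x,\cdot)\to c(x,\cdot)$ continuously on the compact $\Act$ (uniformly over the compact $K$), $\delta\in\cC(\bar O\times\Act)$ is unperturbed, and $\phi_{n_k}\to\phi^*$ in $\cC^1$ on $K$. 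Hence $\phi^*\in\Sobl^{2,p}(O)\cap\cC(\bar O)$ solves the true exit-time HJB equation with boundary value $h$, so by the uniqueness recorded just before the statement (via Leray--Schauder and It\^o--Krylov, as in \cite[Theorems~3.5.3 and 3.5.6]{ABG-book}) we get $\phi^*=\hat{\cJ}_e^*$; equivalently one may identify $\phi^*$ with $\hat{\cJ}_e^*$ directly through the It\^o--Krylov formula stopped at $\uptau(O)$, exactly as $V_\alpha^*$ was handled in \cref{TC1.3}. Since the limit is independent of the subsequence, the whole sequence $\hat{\cJ}_{e,n}^*$ converges, and the uniform-on-$\bar O$ convergence gives the pointwise statement for every $x\in\bar O$.

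As in the whole-space theorems, the crux — and the reason the $\cC^{1,\beta}$ compactness is indispensable — is passing to the limit in the nonlinear Hamiltonian when $a_n\to a$ only a.e.\ and $b_n,c_n$ converge only continuously in the control variable; this is precisely what the splitting above is designed for (strong $\cC^{1,\beta}$ convergence of $\phi_{n_k}$ for the first- and zeroth-order terms inside the $\min$, weak $\Lp^p$ convergence of the Hessians tested against $\cC_c^\infty$ functions and paired with the a.e.-convergent bounded coefficients $a_{n_k}$ for the second-order term, and (A5)(ii) together with compactness of $\Act$ for the uniform convergence of the $\min$). The only genuinely new ingredient compared with \cref{TC1.3} is the boundary: boundedness of $O$ removes any need for a Lyapunov/exhaustion argument, but one does need the barrier-based uniform modulus of continuity at $\partial O$ — available thanks to smoothness of $O$ and the common datum $h$ — to secure both the compactness up to $\bar O$ and the persistence of the boundary condition in the limit, and verifying this equicontinuity is the main technical point to write out carefully.
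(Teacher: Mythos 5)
Your proposal is correct and follows exactly the route the paper prescribes: the paper's own "proof" of this theorem is only the one-line remark that one should use standard elliptic estimates on the bounded domain $O$ and mimic Theorem~\ref{TC1.3}, which is precisely what you carry out (uniform sup bound via the expected exit time, interior $\Sob^{2,p}$ bounds, weak-$\Sob^{2,p}$/strong-$\cC^{1,\beta}_{loc}$ subsequential limits, passage to the limit in the Hamiltonian against test functions, and identification via uniqueness of the exit-time HJB solution). The barrier-based uniform modulus of continuity at $\partial O$, which you rightly flag as the one genuinely new ingredient needed to preserve the boundary condition in the limit, is a detail the paper glosses over but is handled correctly in your write-up.
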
For each $n\in\NN$, suppose that $\hat{v}_{e,n}^*\in \Usm$, $\hat{v}_{e}^*\in \Usm$ are optimal controls of the approximated model and true model respectively. Then in view of the the above continuity result, following the steps of the proof of the Theorem~\ref{TC1.4}, we obtain the following robustness result.  
\begin{theorem}\label{TExi1.2}
Suppose Assumptions (A1)-(A5) hold. Then 
\begin{equation*}
\lim_{n\to\infty} \hat{\cJ}_{e}^{\hat{v}_{e,n}^*}(x) = \hat{\cJ}_{e}^{\hat{v}_{e}^*}(x) \quad\text{for all}\,\, x\in \bar{O}\,.
\end{equation*}
\end{theorem}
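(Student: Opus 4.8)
The plan is to mirror the proof of Theorem~\ref{TC1.4}, now on the fixed bounded domain $O$, where matters are in fact slightly simpler since every PDE in play is posed on $O$ with the smooth boundary condition $w=h$ on $\partial O$ and the non-degeneracy of $a$ keeps $\sup_{x\in\bar O}\sup_{U}\Exp_x^U[\tau(O)]$ finite. For fixed $x\in\bar O$ I would start from the split
\begin{equation*}
\babs{\hat{\cJ}_{e}^{\hat v_{e,n}^*}(x) - \hat{\cJ}_{e}^{\hat v_{e}^*}(x)} \,\le\, \babs{\hat{\cJ}_{e}^{\hat v_{e,n}^*}(x) - \hat{\cJ}_{e,n}^*(x)} + \babs{\hat{\cJ}_{e,n}^*(x) - \hat{\cJ}_{e}^{*}(x)}\,,
\end{equation*}
where $\hat{\cJ}_e^{\hat v_e^*}=\hat{\cJ}_e^*$ by optimality of $\hat v_e^*$. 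The second term tends to $0$ by the continuity result Theorem~\ref{TExi1.1}, so the whole task reduces to showing $\hat{\cJ}_e^{\hat v_{e,n}^*}(x)-\hat{\cJ}_{e,n}^*(x)\to0$.

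For this, set $\phi_n^*\df\hat{\cJ}_e^{\hat v_{e,n}^*}$ and $\psi_n\df\hat{\cJ}_{e,n}^*$. By the It\^o--Krylov verification argument indicated before Theorem~\ref{TExi1.1}, $\phi_n^*$ solves the \emph{linear} Dirichlet problem on $O$ with leading part $a$, drift $b(\cdot,\hat v_{e,n}^*(\cdot))$, zeroth-order coefficient $-\delta(\cdot,\hat v_{e,n}^*(\cdot))$, source $c(\cdot,\hat v_{e,n}^*(\cdot))$ and boundary data $h$, while $\psi_n$ --- the approximate-model HJB solution, of which $\hat v_{e,n}^*$ is a minimizing selector --- solves the analogous linear problem with leading part $a_n$ and source $c_n(\cdot,\hat v_{e,n}^*(\cdot))$. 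Both $\{\phi_n^*\}$ and $\{\psi_n\}$ are bounded in $\Lp^\infty(O)$ (for $\phi_n^*$ directly by $M\sup_{x}\sup_U\Exp_x^U[\tau(O)]+\|h\|_\infty$, using $\delta\ge0$; for $\psi_n$ by the bound obtained in the proof of Theorem~\ref{TExi1.1}). Applying the standard elliptic $\Sob^{2,p}(O)$ estimates to these linear equations (as in the proofs of Theorems~\ref{TC1.3} and~\ref{TExi1.1}) gives $\|\phi_n^*\|_{\Sob^{2,p}(O)}+\|\psi_n\|_{\Sob^{2,p}(O)}\le\kappa$ with $\kappa$ independent of $n$, for $p\ge d+1$; then, using compactness of $\Usm$, the Banach--Alaoglu theorem, and the compact embedding $\Sob^{2,p}(O)\hookrightarrow\cC^{1,\beta}(\bar O)$ with $\beta<1-\tfrac{d}{p}$, I would pass to a subsequence along which $\hat v_{e,n_k}^*\to\hat v$ in $\Usm$ and $\phi_{n_k}^*\to\phi^*$, $\psi_{n_k}\to\psi$ weakly in $\Sob^{2,p}(O)$ and strongly in $\cC^{1,\beta}(\bar O)$.

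The decisive step --- and the one I expect to be the main obstacle, exactly as in Theorem~\ref{TC1.4} --- is passing to the limit in the weak (test-function) formulations of the two linear equations in the first-order terms $b_{n_k}(\cdot,\hat v_{e,n_k}^*(\cdot))\cdot\grad\phi_{n_k}^*$ (and its $\psi$-analogue) and the zeroth-order terms $\delta(\cdot,\hat v_{e,n_k}^*(\cdot))\phi_{n_k}^*$, since the control converges only weakly in the $\Usm$-topology while the gradient and value converge strongly. I would handle this with the splitting of \cref{ETC1.4EA}: write each product as (bounded coefficient)$\times$(strongly vanishing gradient/value difference) $+$ (weakly vanishing coefficient difference)$\times$(fixed limit), the first piece $\to0$ uniformly on $\bar O$ and the second $\to0$ weakly via the defining convergence of the $\Usm$-topology (with $g=b$, resp.\ $g=\delta$, in $\cC_b(\bar O\times\Act)$ and an appropriate $L^1\cap L^2$ weight built from $\phi^*$, $\grad\phi^*$ and the test function), the $n_k$-dependence inside $a_{n_k},b_{n_k},c_{n_k}$ being absorbed by the continuous convergence in controls of Assumption~(A5)(ii)--(iii) and $a_{n_k}\to a$ a.e. This identifies $\phi^*$ and $\psi$ as solutions of the \emph{same} linear Dirichlet problem $\sL_{\hat v}w-\delta(\cdot,\hat v(\cdot))w+c(\cdot,\hat v(\cdot))=0$ in $O$, $w=h$ on $\partial O$, which --- since $\delta\ge0$ and $a$ is uniformly elliptic on $O$ --- has a unique solution (by the maximum principle \cite[Theorem~9.6]{GilTru}, or equivalently by It\^o--Krylov), whence $\phi^*\equiv\psi$ on $\bar O$. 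Since $\psi_{n_k}=\hat{\cJ}_{e,n_k}^*\to\hat{\cJ}_e^*=\hat{\cJ}_e^{\hat v_e^*}$ pointwise by Theorem~\ref{TExi1.1}, we get $\psi=\hat{\cJ}_e^{\hat v_e^*}$, so $\hat{\cJ}_e^{\hat v_{e,n_k}^*}(x)=\phi_{n_k}^*(x)\to\phi^*(x)=\hat{\cJ}_e^{\hat v_e^*}(x)$; as this limit does not depend on the chosen subsequence, the full sequence converges, which is the assertion.
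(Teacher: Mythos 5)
Your proposal is correct and follows essentially the same route as the paper, which proves Theorem~\ref{TExi1.2} precisely by invoking the continuity result of Theorem~\ref{TExi1.1} and then repeating the argument of Theorem~\ref{TC1.4} on the bounded domain $O$. Your write-up in fact fills in the details the paper leaves implicit (the uniform $\Sob^{2,p}(O)$ bounds for the two linear Dirichlet problems, the limit passage in the first- and zeroth-order terms via the splitting of \cref{ETC1.4EA}, and the identification of the common limit through uniqueness of the limiting Dirichlet problem), all consistently with the paper's intended argument.
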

\section{Revisiting Example \ref{ERS11Example}}
Consider Example \ref{ERS11Example}(i). 
\begin{itemize}
\item[•]\textbf{For discounted cost:} Let $\hat{v}_n^*$ be a discounted cost optimal control when the system is governed by \cref{{ERS1.1}} (existence of such control is ensured by Theorem~\ref{TD1.1}). Then following Theorem~\ref{TC1.4}, we have that 
\begin{equation}\label{ENoiseAPPDisc}
\lim_{n\to\infty} \cJ_{\alpha}^{\hat{v}_n^*}(x, c) = \cJ_{\alpha}^{v^*}(x, c) \quad\text{for all}\,\, x\in\Rd\,.
\end{equation}
\item[•]\textbf{For ergodic cost:} Let $\hat{v}_n^*$ be an ergodic optimal control when the system is governed by \cref{{ERS1.1}} (existence is guaranteed by Theorem~\ref{ergodicnearmono2}, Theorem~\ref{TErgoOptApprox1}). Then arguing as in Theorem~\ref{ErgodNearmonoRobu1} (for near-monotone case) Theorem~\ref{ErgodLyapRobu1} (for stable case), it follows that 
\begin{equation}\label{ENoiseAPPErgo}
\lim_{n\to\infty} \inf_{x\in\Rd}\sE_{x}(c, v_{n}^*) = \sE^{*}(c)\,.
\end{equation}
\item[•]\textbf{Finite horizon cost:}
For each $n\in\NN$, let $\hat{v}_n^*$ be a finite horizon optimal control when the system is governed by \cref{{ERS1.1}}\,. Then in view of Theorem~\ref{FinitecostThm2}, we have 
\begin{equation}\label{EFiniteCost}
\lim_{n\to\infty} \cJ_{T}^{\hat{v}_{n}^*}(x, c) = \cJ_{T}^{*}(x, c) \quad\text{for all}\,\, x\in \Rd\,.
\end{equation}
\item[•]\textbf{For cost up to an exit time:} Let $\hat{v}_{e,n}^*$ be a an optimal control when the system is governed by \cref{{ERS1.1}}, for each $n\in \NN$. Then Theorem~\ref{TExi1.2} ensures that 
\begin{equation}\label{ENoiseExiCost}
\lim_{n\to\infty}\hat{\cJ}_{e}^{\hat{v}_{e,n}^*}(x) = \hat{\cJ}_{e}^{\hat{v}_{e}^*}(x) \quad\text{for all}\,\, x\in \bar{O}\,.
\end{equation}
\end{itemize}

\section{Conclusion}
In this paper, we studied continuity of optimal costs and robustness/stability of optimal control policies designed for an incorrect models applied to an actual model for both discounted/ergodic cost criteria. In our analysis we have crucially used the fact that our actual model is a non-degenerate diffusion model. It would be an interesting problem to investigate if such results can be proved in the cases when the limiting system (actual system) is a degenerate diffusion system. Also, in our analysis we have assumed that our system noise is given by a Wiener process; it would be interesting to study further noise processes e.g., when system noise is a wide-bandwidth process or a more general discontinuous martingale noise (as in \cite{K90}, \cite{KR87}, \cite{KR87a}, \cite{KR88})\,. In the latter case the controlled process may become non-Markovian process even under stationary Markov policies. Therefore, it is reasonable to find suitable Markovian approximation of it which maintains the necessary properties of the original system. The analysis of robustness problems in this setting is a direction of research worth pursuing. % in the future.

%%%%%%%%%%%%%%%%%%%%%%%%%%%%%%%%%%%%%%%%%%%%%%%%%%%%%%%%%%%%%%%%%%%%%%%%%%%%%%%%%
%\subsection*{Acknowledgement}

%%%%%%%%%%%%%%%%%%%%%%%%%%%%%%%%%%%%%%%%%%%%%%%%%%%%%%%%%%%%%%%%%%%%%%%%%%%%%%%%
%\bibliographystyle{abbrv}
\bibliography{Somnath_Robustness}
\end{document}